\newcommand{\op}[1]{\!\!\mathop{\rm ~#1}\nolimits}
\newcommand{\h}{\op{Hom}}
\newcommand{\p}{\partial}
\newcommand{\Ci}{C^{\infty}}
\newcommand{\N}{\mathbb{N}}
\newcommand{\Z}{\mathbb{Z}}
\mathchardef\za="710B  
\mathchardef\zb="710C  
\mathchardef\zg="710D  
\mathchardef\zd="710E  
\mathchardef\zve="710F 
\mathchardef\zz="7110  
\mathchardef\zh="7111  
\mathchardef\zy="7112 
\mathchardef\zi="7113  
\mathchardef\zk="7114  
\mathchardef\zl="7115  
\mathchardef\zm="7116  
\mathchardef\zn="7117  
\mathchardef\zx="7118  
\mathchardef\zp="7119  
\mathchardef\zr="711A  
\mathchardef\zs="711B  
\mathchardef\zt="711C  
\mathchardef\zu="711D  
\mathchardef\zf="711E 
\mathchardef\zq="711F  
\mathchardef\zc="7120  
\mathchardef\zw="7121  
\mathchardef\ze="7122  
\mathchardef\zvy="7123  
\mathchardef\zvw="7124  
\mathchardef\zvr="7125 
\mathchardef\zvs="7126 
\mathchardef\zvf="7127  
\mathchardef\zG="7000  
\mathchardef\zD="7001  
\mathchardef\zY="7002  
\mathchardef\zL="7003  
\mathchardef\zX="7004  
\mathchardef\zP="7005  
\mathchardef\zS="7006  
\mathchardef\zU="7007  
\mathchardef\zF="7008  
\mathchardef\zW="700A  
\newcommand{\cyclic}{\mathop{\kern0.9ex{{+}
\kern-2.15ex\raise-.25ex\hbox{\Large\hbox{$\circlearrowright$}}}}\limits}
\newcommand{\on}{\operatorname}
\newcommand{\mc}{\mathcal}
\newcommand{\be}{\begin{equation}}
\newcommand{\ee}{\end{equation}}
\newcommand{\NN}{{\mathbb N}}
\newcommand{\Ac}{{\mathcal A}}
\newcommand{\Bc}{{\mathcal B}}
\newcommand{\Dc}{{\mathcal D}}
\newcommand{\Ec}{{\mathcal E}}
\newcommand{\Fc}{{\mathcal F}}
\newcommand{\Hc}{{\mathcal H}}
\newcommand{\Ic}{{\mathcal I}}
\newcommand{\Jc}{{\mathcal J}}
\newcommand{\Lc}{{\mathcal L}}
\newcommand{\Nc}{{\mathcal N}}
\newcommand{\Oc}{{\mathcal O}}
\newcommand{\Pc}{{\mathcal P}}
\newcommand{\Qc}{{\mathcal Q}}
\newcommand{\Rc}{{\mathcal R}}
\newcommand{\Sc}{{\mathcal S}}
\newcommand{\Vc}{{\mathcal V}}
\newcommand{\Wc}{{\mathcal W}}
\newcommand{\Id}{\on{id}}
\newcommand{\colim}{\on{colim}}
\newtheorem{thm}[equation]{Theorem}
\newtheorem{ex}[equation]{Example}
\newtheorem{lem}[equation]{Lemma}
\newtheorem{prop}[equation]{Proposition}
\newtheorem{defi}[equation]{Definition}
   \newtheorem{Defi}[equation]{Definition}
   \newtheorem{rem}[equation]{Remark}
   \numberwithin{equation}{subsection}
\renewenvironment{itemize}{
  \begin{list}{}{
    \setlength{\leftmargin}{4em}
    \setlength{\itemsep}{0.25em}
    \setlength{\parskip}{10pt}
    \setlength{\parsep}{0.25em}
  }
}{
  \end{list}
}
\newcommand{\0}{\otimes}
\newcommand{\ul}{\underline}
\title{Homotopical Algebraic Context over Differential Operators}
\author{Gennaro Di Brino, Damjan Pi\v{s}talo, and Norbert Poncin}
\date{14 June 2017}
\begin{document}

\maketitle

\begin{abstract}
{Building on previous works \cite{DPP, DPP2, PP}, we show that the category of non-negatively graded chain complexes of $\mc{D}_X$-modules -- where $X$ is a smooth affine algebraic variety over an algebraically closed field of characteristic zero -- fits into a homotopical algebraic context in the sense of \cite{TV08}.}
\end{abstract}

\tableofcontents

\section*{Introduction}

The \ul{classical Batalin-Vilkovisky complex} is, roughly, a kind of resolution of $\Ci(\zS)^{\op{gs}}$. The functions $\Ci(\zS)$ of the shell $\zS$ are obtained by identifying those functions of the infinite jet bundle $J^\infty E$ of the field bundle $E\to X$ that coincide on-shell (quotient). We then get the functions $\Ci(\zS)^{\op{gs}}$ by selecting those on-shell functions that are gauge invariant, i.e., constant along the gauge orbits (intersection). When working dually, i.e., with spaces instead function algebras, we first mod out the gauge symmetries, i.e., we consider some space $C:=J^\infty E/\op{GS},$ where $\op{GS}$ refers to integrated gauge symmetry vector fields $\op{gs}$ -- thought of as vector fields prolonged to $J^\infty E$. Since, in the function algebra approach, we determine the shell $\zS$ by solving the algebraic infinite jet bundle equation $\op{Alg}(\op{d}S)=0$ that corresponds to the equation $\op{d}S=0$, where $S$ denotes the functional acting on sections of $E$, it is clear that, in the dual approach, the functional $S$ must be defined on $C$, i.e., $S\in\mathcal{O}(C)$ and $\op{d}S:C\to T^*C$, and that we have then to find those `points $m$' in $C$ that satisfy $\op{d}_mS=0$.\medskip

When switching to the \ul{context of algebraic geometry}, we start with a quasi-coherent module $\Ec\in{\tt qcMod}(\Oc_X)$ over the function sheaf $\Oc_X$ of a {\it scheme} $X$. Let now $\Sc_{\Oc_X}$ be the corresponding symmetric tensor algebra functor. The quasi-coherent commutative $\Oc_X$-algebra $\Sc_{\Oc_X}\Ec\in{\tt qcCAlg}(\Oc_X)$ can be viewed as the pushforward $\Oc_X^{E}$ of the function sheaf of a vector bundle $E\to X$ (we think about $\Ec$ as the module of sections of the dual bundle $E^*$). If $X$ is a {\it smooth scheme}, the infinite jet functor $\mathcal J^\infty$ \cite{BD04} leads to a sheaf ${\mathcal J}^\infty(\Oc_X^{E})\in{\tt qcCAlg}(\Dc_X)$ of commutative algebras over the sheaf $\Dc_X$ of rings of differential operators on $X$, which is quasi-coherent as sheaf of $\Oc_X$-modules. The spectrum of the latter is the infinite jet bundle $J^\infty E\to X$. This bundle is thus an affine $X$-$\Dc_X$-scheme $J^\infty E\in{\tt Aff}(\Dc_X)$.\medskip

Since intersections of two sub-schemes in an affine scheme can be singular at some points (non-transversal intersections), or, algebraically, since tensor products of commutative rings viewed as certain modules can be badly behaved (tensor product functor only right-exact), these tensor products should be left-derived, i.e., commutative rings or commutative algebras should be replaced by simplicial commutative rings or differential non-negatively graded commutative algebras (category {\tt DGCA}). Similarly, since quotients of affine schemes can be non-affine (non-trivial automorphism groups), they should be derived, i.e., replaced by groupoids, or, in the case of higher symmetries, by infinity groupoids or simplicial sets (category {\tt SSet}). For the \ul{functor of points approach} to schemes -- schemes are viewed as, say, locally representable sheaves (for the Zariski topology) $G:\tt CA\to Set$ from commutative algebras to sets -- this means that we pass to functors $F:{\tt DGCA}\to {\tt SSet}$.\medskip

In the following we assume that $X$ is a {\it smooth affine algebraic variety}, so that we can, roughly speaking, replace sheaves by their total sections. In particular, in the above $\Dc$-geometric setting, the differential non-negatively graded commutative algebras of the preceding paragraph, i.e., the objects of $\tt DGCA$, become the objects of ${\tt DG_+qcCAlg}(\Dc_X)$, i.e., the sheaves of differential non-negatively graded $\Oc_X$-quasi-coherent commutative $\Dc_X$-algebras, and, due to the assumption that $X$ is smooth affine, the category ${\tt DG_+qcCAlg}(\Dc_X)$ is equivalent \cite{DPP} to the category $\tt DG\Dc A$ of differential non-negatively graded commutative algebras over the ring $\Dc=\Dc_X(X)$ of total sections of $\Dc_X$. Let us mention that the latter category is of course the category ${\tt CMon(DG\Dc M)}$ of commutative monoids in the symmetric monoidal category $\tt DG\Dc M$ of differential non-negatively graded modules over $\Dc$ (i.e., the category $\tt DG\Dc M$ of non-negatively graded chain complexes of $\Dc$-modules), as well as that, despite the used simplified notation $\tt DG\Dc M$ and $\tt DG\Dc A$, the reader should keep in mind the considered non-negative grading and underlying variety $X$.

It follows that, in $\Dc$-geometry, the above functors $F: \tt DGCA\to SSet$ become functors $$F:\tt DG\Dc A\to SSet\;.$$ As suggested in the second paragraph, the category ${\tt DG\Dc A\simeq DG_+qcCAlg}(\Dc_X)$ is opposite to the category ${\tt D_-Aff}(\Dc_X)$ of  \ul{derived affine} $X$-$\Dc_X$-\ul{schemes}. Those functors or presheaves $F:\tt DG\Dc A\to SSet$ that are actually sheaves are referred to as  \ul{derived} $X$-$\Dc_X$-\ul{stacks} and the model category of presheaves $\tt F:=Fun(DG\Dc A,SSet)$ models the category of derived $X$-$\Dc_X$-stacks \cite{TV05, TV08}. The \ul{sheaf condition} is a natural homotopy version of the standard sheaf condition \cite{KS}. This homotopical variant is correctly encoded in the fibrant object condition of the \ul{local model structure} of $\tt F$. That structure encrypts both, the model structure of the target and the one of the source \cite{DPP, DPP2}. More precisely, one starts with the global model structure on $\tt F$, which is the one implemented `object-wise' by the model structure of the target category $\tt SSet$. The model structure of the source category $\tt DG\Dc A$ is taken into account via the left Bousfield localization with respect to the weak equivalences of $\tt DG\Dc A^{\op{op}}$, what leads to a new model category denoted by $\tt F{\;\hat{}}\,$. If $\zt$ is an appropriate model pre-topology on $\tt DG\Dc A^{\op{op}}$, it should be possible to define homotopy $\zt$-sheaves of groups, as well as a class $H_\zt$ of homotopy $\zt$-hypercovers. The mentioned local model category ${\tt F}{\;\tilde{}}^{\;,\tau}$ arises now as the left Bousfield localization of $\tt F{\;\hat{}}\,$ with respect to $H_\zt$. The local weak equivalences are those natural transformations that induce isomorphisms between all homotopy sheaves. The fibrant object condition in ${\tt F}{\;\tilde{}}^{\;,\tau}\!$, which is roughly the descent condition with respect to the homotopy $\zt$-hypercovers, is the searched sheaf or stack condition for derived $X$-$\Dc_X$-stacks \cite{TV05, TV08}. The notion of derived $X$-$\Dc_X$-stack represented by an object in ${\tt D_-Aff}(\Dc_X)\simeq {\tt DG\Dc A}^{\op{op}}$, i.e., by a derived affine $X$-$\Dc_X$-scheme, can easily be defined.

Notice finally that our two assumptions -- smooth and affine -- on the underlying algebraic variety $X$ are necessary. Exactly the same smoothness condition is indeed used in \cite{BD04}[Remark p. 56], since for an arbitrary singular scheme $X$, the notion of left $\Dc_X$-module is meaningless. On the other hand, the assumption that $X$ is affine is needed to replace the category ${\tt DG_+qcMod}(\Dc_X)$ by the category $\tt DG\Dc M$ and to thus avoid the problem of the non-existence of a projective model structure \cite{Gil06}. However, the confinement to the affine case, does not only allow to use the artefacts of the model categorical environment, but may also allow to extract the fundamental structure of the main actors of the considered problem and to extend these to an arbitrary smooth scheme $X$ \cite{PP}.\smallskip

To implement the preceding ideas, one must prove that the triplet $(\tt DG\Dc M, DG\Dc M, DG\Dc A)$ is a homotopical algebraic context ({\small HA} context) and consider moreover a homotopical algebraic geometric context $({\tt DG\Dc M, DG\Dc M, DG\Dc A},\zt,\mathbf{P})$ ({\small HAG} context). A {\small HA} context is a triplet $(\tt C, C_0,A_0)$ made of a symmetric monoidal model category $\tt C$ and two full subcategories $\tt C_0\subset\tt C$ and $A_0\subset\tt CMon(C)$, which satisfy several quite natural but important assumptions that guarantee that essential tools from linear and commutative algebra are still available. Further, $\mathbf{P}$ is a class of morphisms in $\tt DG\Dc A^{\op{op}}$ that is compatible with $\zt$ (a priori one may think about $\zt$ as being the \'etale topology and about $\mathbf{P}$ as being a class of smooth morphisms). In this framework, a 1-\ul{geometric derived} $X$-$\Dc_X$-\ul{stack} is, roughly, a derived $X$-$\Dc_X$-stack, which is obtained as the quotient by a groupoid action -- in representable derived $X$-$\Dc_X$-stacks -- that belongs to $\mathbf{P}$. Hence, $\mathbf{P}$ determines the type of action we consider (e.g., a smooth action, maybe a not really nice action) and determines the type of geometric stack we get.

Let us now come back to the first two paragraphs of this introduction. Since $J^\infty E\in {\tt Aff}(\Dc_X)\subset {\tt D_-Aff}(\Dc_X)\simeq {\tt DG\Dc A^{\op{op}}}$ is a representable derived $X$-$\Dc_X$-stack, it is natural to view $C:=J^\infty E/\op{GS}$, or, better, $C:=[J^\infty E/\op{GS}]$ as a $1$-geometric derived $X$-$\Dc_X$-stack (or even an $n$-geometric one). Further evidence for this standpoint appears in \cite{CG1, P11, Paugam1, Vino}.

The full implementation of the above $\Dc$-geometric \cite{BD04} extensions of homotopical algebraic geometric ideas \cite{TV05, TV08}, as well as of the program sketched in the first paragraph {\it within} this {\small HAG} setting over differential operators, is being written down in a separate paper \cite{PP1}. In the present text, we prove that $(\tt DG\Dc M, DG\Dc M, DG\Dc A)$ is indeed a {\small HA} context. Let us recall that modules over the non-commutative ring $\Dc$ of differential operators are rather special. For instance, the category $\tt DG\Dc M$ is closed monoidal, with internal Hom and tensor product taken, not over $\Dc$, but over $\Oc$. More precisely, one considers in fact the $\Oc$-modules given, for $M,N\in\tt DG\Dc M$, by $\h_\Oc(M,N)$ and $M\0_\Oc N$, and shows that their $\Oc$-module structures can be extended to $\Dc$-module structures. This and other specificities must be kept in mind throughout the whole of the paper, and related subtleties have to be carefully checked.

It can be shown that the \ul{new homotopical algebraic} $\Dc$-\ul{geometric approach} provides in particular a convenient way to encode total derivatives and allows to recover the classical Batalin-Vilkovisky complex as a specific case of the general constructions \cite{PP1}.

\section{{Monoidal model structure on differential graded $\Dc$-modules.}}\label{S:momo}

In this section, we show that the category ${\tt DG\Dc M}$ is a symmetric monoidal {\it model} category. Such a category is the basic ingredient of a Homotopical Algebraic Context.

\begin{defi} A {\em symmetric monoidal model structure} on a category $\tt C$ is a closed symmetric monoidal structure together with a model structure on $\tt C$, which satisfy the compatibility axioms:
\begin{itemize}
 \item[\bf{MMC1.}] The monoidal structure {$\otimes:{\tt C}\times{\tt C}\rightarrow {\tt C}$} is a Quillen bifunctor.
 \item[\bf{MMC2.}] If $Q\mathbb{I}\xrightarrow{q} \mathbb{I}$ is the cofibrant replacement of the monoidal unit $\mathbb{I}$ (obtained from the functorial `cofibration - trivial fibration' decomposition of $\emptyset\to \mathbb{I}$), then the map $$Q\mathbb{I}\otimes C\xrightarrow{q\otimes \Id} \mathbb{I}\otimes C$$ is a weak equivalence for every cofibrant $C\in\tt C$.
\end{itemize}
\end{defi}

We briefly comment on this definition \cite{Ho99}.\medskip

1. It is known that a morphism of model categories needs not respect the whole model categorical structure -- this would be too strong a requirement. The concept of Quillen functor is the appropriate notion of morphism between model categories. In the preceding definition, we ask that $\otimes$ be a Quillen {\em bi}$\,$functor, i.e., that, for any two cofibrations $f:T\to U$ and $g:V\to W$, the universal morphism or {\em pushout product} $f\Box g$ in the next diagram be a cofibration as well -- which is trivial if one of the inducing maps $f$ or $g$ is trivial.

\begin{equation}\label{e:chpushout}
\begin{tikzcd}[column sep=large]
T\otimes V \arrow{r}{f\otimes \Id} \arrow{d}[swap]{\Id\otimes g} & U\otimes V \arrow{d}{} \arrow[bend left]{ddr}{\Id\otimes g}\\
T\otimes W \arrow[bend right]{drr}[swap]{f\otimes \Id} \arrow{r}{} & U\otimes V\,\coprod_{T\otimes V}\,T\otimes W \arrow[dashed]{dr}[swap]{f\Box g}\\
& & U\otimes W
\end{tikzcd}
\ee

If the model category $\tt C$ is cofibrantly generated, it suffices to check the pushout axiom {\small MMC1} for generating (trivial) cofibrations.

2. Note that the axiom {\small MMC2} is obviously satisfied if $\mathbb{I}$ is cofibrant.\medskip

The category $\tt C = \tt DG\Dc M$ is an Abelian symmetric monoidal and a finitely generated model category \cite{DPP} over any smooth affine variety $X$ over an algebraically closed field of characteristic 0.\medskip

The monoidal unit is $\mathbb{I}=\Oc=\Oc_X(X)$ viewed as concentrated in degree 0 and with zero differential. This complex $(\Oc,0)$ is cofibrant if the unique chain map $(\{0\},0)\to (\Oc,0)$ is a cofibration, i.e., an injective chain map with degree-wise projective cokernel. It is clear that this cokernel is $(\Oc,0)$ itself. It is degree-wise projective if and only if $\Oc$ is a projective $\Dc$-module. Therefore, the axiom {\small MMC2} is not obvious, if $\Oc$ is not a flat $\Dc$-module. The $\Dc$-module $\Oc$ is flat if and only if, for any injective $\Dc$-linear map $M\to N$ between right $\Dc$-modules, the induced $\Z$-linear map $M\0_\Dc\Oc\to N\0_\Dc\Oc$ is injective as well. Let now $\Oc=\mathbb{C}[z]$, consider the complex affine line $X=\text{Spec}\,\Oc$ and denote by $(\p_z)$ the right ideal of the ring $\Dc=\Dc_X(X)$. The right $\Dc$-linear injection $(\p_z)\to \Dc$ induces the morphism $$
(\p_z)\otimes_{\Dc}\Oc\rightarrow \Dc\otimes_{\Dc}\Oc\simeq\Oc\;.$$ Since $\p_z\simeq\p_z\otimes_{\Dc}1$ is sent to $1 \otimes_{\Dc}\p_z 1\simeq 0$, the kernel of the last morphism does not vanish; hence, in the case of the complex affine line, $\Oc$ is not $\Dc$-flat. Eventually, {\small MMC2} is not trivially satisfied.\medskip

Before proving that {\small MMC1} and {\small MMC2} hold, we have still to show that the category ${\tt DG\Dc M}$, which carries a (cofibrantly generated) model structure, is {\em closed} symmetric monoidal. Let us stress that the equivalent category ${\tt DG_+qcMod}(\Dc_X)$ is of course equipped with a model structure, but is {\it a priori} not closed, since the internal Hom of $\Oc_X$-modules does not necessarily preserve $\Oc_X$-quasi-coherence (whereas the tensor product of quasi-coherent $\Oc_X$-modules is quasi-coherent). On the other hand, the category ${\tt DG_+Mod}(\Dc_X)$ is closed symmetric monoidal \cite{Scha, Schn}, but not endowed with a projective model structure \cite{Gil06} (it has an injective model structure, which, however, is not monoidal \cite{Joy}). The problem is actually that the category ${\tt Mod}(\Dc_X)$ has not enough projectives. The issue disappears for ${\tt qcMod}(\Dc_X)$, since this category is equivalent to the category ${\tt \Dc M}$ of modules over the ring $\Dc$.\medskip

Let us start with the following observation. Consider a topological space $X$ -- in particular a smooth variety -- and a sheaf $\Rc_X$ of unital rings over $X$, and let $R=\zG(X,\Rc_X)$ be the ring of global sections of $\Rc_X$. We will also denote the global sections of other sheaves by the Latin letter corresponding to the calligraphic letter used for the considered sheaf. The localization functor $\Rc_X\0_R -: {\tt Mod}(R) \to {\tt Mod}(\Rc_X)$ is left adjoint to the global section functor $\zG(X,-): {\tt Mod}(\Rc_X)\to {\tt Mod}(R)$: \be\label{LeftAdj}\text{Hom}_{\Rc_X} (\Rc_X\0_R V,\Wc) \simeq \text{Hom}_R(V,\text{Hom}_{\Rc_X} (\Rc_X,\Wc))\simeq \text{Hom}_R(V,W)\;,\ee for any $V\in {\tt Mod}(R)$ and $\Wc\in {\tt Mod}(\Rc_X)$  \cite{MilDMod}.\bigskip

As mentioned above, the category $({\tt Mod}(\Dc_X),\0_{\Oc_X},\Oc_X,{\Hc}om_{\Oc_X})$ is Abelian closed symmetric monoidal.
More precisely, for any $\Nc,\Pc,\Qc\in{\tt Mod}(\Dc_X)$, there is an isomorphism \be\label{Isom}{\Hc}om_{\Dc_X}(\Nc\0_{\Oc_X}\Pc,\Qc)\simeq{\Hc}om_{\Dc_X}(\Nc,{\Hc}om_{\Oc_X}(\Pc,\Qc))\;.\ee Let now $\Rc_X$ be $\Oc_X$ or $\Dc_X$. The preceding Hom functor ${\Hc}om_{{\Rc_X}}(-,-)$ is the `internal' Hom of sheaves of ${\Rc_X}$-modules, i.e., the functor defined, for any such sheaves $\Vc,\Wc\in{\tt Mod}(\Rc_X)$ and for any open $U\subset X$, by $${\Hc}om_{{\Rc_X}}(\Vc,\Wc)(U)=\text{Hom}_{{\Rc_X}|_U}(\Vc|_U,\Wc|_U)\;,$$ where the {\small RHS} Hom denotes the morphisms of sheaves of ${\Rc_X}|_U$-modules. This set is an Abelian group and an $\Rc_X(U)$-module, if $\Rc_X$ is commutative. Hence, by definition, we have $$\zG(X,{\Hc}om_{{\Rc_X}}(\Vc,\Wc))=\text{Hom}_{{\Rc_X}}(\Vc,\Wc)\;.\vspace{1.5mm}$$

Recall now that, in the (considered) case of a smooth affine variety $X$, the global section functor $\zG(X,-)$ yields an equivalence $$\zG(X,-):{\tt qcMod}(\Rc_X)\to {\tt Mod}(R):\Rc_X\0_R -\;$$ of Abelian symmetric monoidal categories. The quasi-inverse $\Rc_X\0_R -$ of $\zG(X,-)$ is well-known if $\Rc_X=\Oc_X$; for $\Rc_X=\Dc_X$, we refer the reader to \cite{HTT}; the quasi-inverses are both strongly monoidal. If $\Vc\in{\tt qcMod}(\Rc_X)$ and $\Wc\in{\tt Mod}(\Rc_X)$, we can thus write $\Vc\simeq\Rc_X\0_R V$, where $V=\zG(X,\Vc)$, and, in view of (\ref{LeftAdj}), also \be\label{Sol}\text{Hom}_{\Rc_X} (\Vc,\Wc)=\text{Hom}_{\Rc_X} (\Rc_X\0_R V,\Wc)\simeq \text{Hom}_R(V,W)\;.\ee

When applying the global section functor to (\ref{Isom}), we get $$\text{Hom}_{\Dc_X}(\Nc\0_{\Oc_X}\Pc,\Qc)\simeq\text{Hom}_{\Dc_X}(\Nc,{\Hc}om_{\Oc_X}(\Pc,\Qc))\;,$$ and, when assuming that $\Nc,\Pc,\Qc\in{\tt qcMod}(\Dc_X)$ and using (\ref{Sol}), we obtain $$\text{Hom}_{\Dc}(\zG(X,\Nc\0_{\Oc_X}\Pc),Q)\simeq\text{Hom}_{\Dc}(N,\zG(X,{\Hc}om_{\Oc_X}(\Pc,\Qc)))\;,$$ or, still,
\be\label{Closed}\text{Hom}_{\Dc}(N\0_{\Oc}P,Q)\simeq\text{Hom}_{\Dc}(N,\text{Hom}_{\Oc}(P,Q))\;.\ee Since any $\Dc$-module $L$ can be viewed as $\zG(X,\Lc)$, where $\Lc=\Dc_X\0_\Dc L\in{\tt qcMod}(\Dc_X)\subset{\tt Mod}(\Dc_X)$, the equation (\ref{Closed}) proves that $({\tt \Dc M},\otimes_\Oc,\Oc,\text{Hom}_\Oc)$ is -- just as (${\tt Mod}(\Dc_X),\0_{\Oc_X},\Oc_X,{\Hc}om_{\Oc_X})$ -- an Abelian closed symmetric monoidal category. Observe that the internal Hom of $\tt \Dc M$ is given by: \be\label{IntHomDM}\text{Hom}_\Oc(-,-)=\zG(X,{\Hc}om_{\Oc_X}(\Dc_X\0_\Dc -,\Dc_X\0_\Dc -))\in{\tt \Dc M}\;.\ee

Both categories satisfy the {\small AB3} (Abelian category with direct sums) and {\small AB3*} (Abelian category with direct products) axioms. It thus follows from \cite[Lemma 3.15]{LH} that the corresponding categories of chain complexes are Abelian closed symmetric monoidal as well. The tensor product is the usual tensor product $(-\0_\bullet-,\zd_\bullet)$ of chain complexes and the internal $(\text{Hom}_\bullet(-,-),d_\bullet)$ is defined, for any complexes $(M_\bullet,d_M)$  and $(N_\bullet,d_N)$ and for any $n\in\N$, by \be\label{DGHom}\text{Hom}_n(M_\bullet,N_\bullet)=\begin{cases}\prod_{k\in\N}\text{Hom}_\Oc(M_k,N_{k+n}),\;\,\text{in the case of}\;\, {\tt DG\Dc M}\;,\\ \prod_{k\in\N}{\Hc}om_{\Oc_X}(M_k,N_{k+n}),\;\,\text{in the case of}\;\,{\tt DG_+Mod}(\Dc_X)\;,\end{cases}\ee and, for any $f=(f_k)_{k\in\N}\in\text{Hom}_n(M_\bullet,N_\bullet),$ by $$(d_nf)_k=d_N\circ f_{k}-(-1)^nf_{k-1}\circ d_M\;.\vspace{1.5mm}$$

The closed structure $\text{Hom}_\bullet(-,-)$ of $\tt DG\Dc M$ defines a closed structure ${\Hc}om_\bullet(-,-)$ on the equivalent category ${\tt DG_+qcMod}(\Dc_X)$ via usual transfer $${\Hc}om_\bullet(-,-)=\Dc_X\0_\Dc\left(\text{Hom}_\bullet(\zG(X,-),\zG(X,-))\right)=$$ $$\Dc_X\0_\Dc\left(\prod_{k}\text{Hom}_\Oc(\zG(X,-_{k}),\zG(X,-_{k+\bullet}))\right)=\Dc_X\0_\Dc\left(\prod_{k}\zG(X,{\Hc}om_{\Oc_X}(-_{k},-_{k+\bullet}))\right)\;,$$ where we used (\ref{IntHomDM}). According to what has been said above, we have the adjunction $$\Dc_X\0_\Dc -:{\tt Mod}(\Dc)\rightleftarrows{\tt Mod}(\Dc_X):\zG(X,-)\;,$$ so that $\zG(X,-)$ commutes with limits: $${\Hc}om_\bullet(-,-)=\Dc_X\0_\Dc\zG\left(X,\prod_{k}{\Hc}om_{\Oc_X}(-_{k},-_{k+\bullet})\right)=\Dc_X\0_\Dc\zG\left(X,\text{Hom}_\bullet(-,-)\right)\;,$$ where $\text{Hom}_\bullet(-,-)$ is now the above closed structure of ${\tt DG_+Mod}(\Dc_X)$. However, since $\text{Hom}_\bullet(-,-)$ is in general not quasi-coherent, the {\small RHS} is in the present case not isomorphic to the module $\text{Hom}_\bullet(-,-)$. More precisely, the closed structure on ${\tt DG_+qcMod}(\Dc_X)$ is given by the coherator of the closed structure on ${\tt DG_+Mod}(\Dc_X)$. Note also that, since $\tt DG\Dc M$ and ${\tt DG_+qcMod}(\Dc_X)$ are equivalent symmetric monoidal categories and the internal Hom of the latter is the transfer of the one of the former closed symmetric monoidal category, the second category is closed symmetric monoidal as well (i.e., its monoidal and its closed structures are `adjoint'). \medskip

Hence, the

\begin{prop} The category $({\tt DG\Dc M},\0_\bullet,\Oc,\text{\em Hom}_\bullet)$ $(\,$resp., $({\tt DG_+qcMod}(\Dc_X),\0_\bullet,\Oc_X,{\Hc}om_\bullet)$$\,)$ is Abelian closed symmetric monoidal. The closed structure is obtained by transfer of $(\,$resp., as the coherator of$\;)$ the closed structure of ${\tt DG_+Mod}(\Dc_X)$. In particular, for any $N_\bullet, P_\bullet, Q_\bullet\in {\tt DG\Dc M}$, there is a $\Z$-module isomorphism \be\label{IntHomDG}\text{\em Hom}_{{\tt DG\Dc M}}(N_\bullet\0_\bullet P_\bullet,Q_\bullet)\simeq\text{\em Hom}_{\tt DG\Dc M}(N_\bullet,\text{\em Hom}_\bullet(P_\bullet,Q_\bullet))\;,\ee which is natural in $N_\bullet$ and $Q_\bullet\,$.\end{prop}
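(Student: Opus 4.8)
The plan is to reduce the statement to the module-level results already in hand and then promote them to chain complexes by the cited categorical lemma; the bulk of the genuine work has in fact been carried out in the paragraphs preceding the statement, so the proof is largely an assembly. At the level of underlying modules everything is established: equation (\ref{Closed}), together with the identification (\ref{IntHomDM}) of the internal Hom, exhibits $({\tt \Dc M},\0_\Oc,\Oc,\text{Hom}_\Oc)$ as an Abelian closed symmetric monoidal category, and (\ref{Isom}) does the same for $({\tt Mod}(\Dc_X),\0_{\Oc_X},\Oc_X,{\Hc}om_{\Oc_X})$. The symmetry is inherited from that of $\0_\Oc$ (resp. $\0_{\Oc_X}$) on $\Oc$-modules, once one knows -- as recalled in the introduction -- that the $\Oc$-module structures on $N\0_\Oc P$ and $\text{Hom}_\Oc(P,Q)$ extend canonically to $\Dc$-module structures.

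To pass from modules to non-negatively graded chain complexes I would invoke \cite[Lemma 3.15]{LH}. Its hypotheses are precisely that the base Abelian category be closed symmetric monoidal and satisfy {\small AB3} and {\small AB3*}; the former is the content of the previous step, and the latter holds because both $\tt \Dc M$ and ${\tt Mod}(\Dc_X)$ admit arbitrary direct sums and products, computed as for the underlying $\Oc$-modules. The Lemma then yields the Abelian closed symmetric monoidal structure on the chain-complex categories, with tensor product the usual total tensor product $(-\0_\bullet-,\zd_\bullet)$ and internal Hom given by (\ref{DGHom}) with the indicated differential. The adjunction isomorphism (\ref{IntHomDG}) is then nothing but the defining adjunction making $\text{Hom}_\bullet(P_\bullet,-)$ right adjoint to $-\0_\bullet P_\bullet$, and naturality in the two adjunction variables $N_\bullet$ and $Q_\bullet$ is automatic.

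For the \emph{resp.} clause I would transport the whole structure across the symmetric monoidal equivalence $\zG(X,-):{\tt DG_+qcMod}(\Dc_X)\simeq{\tt DG\Dc M}$, whose quasi-inverse $\Dc_X\0_\Dc-$ is strongly monoidal. Transferring the tensor product is harmless, since $\0_{\Oc_X}$ preserves quasi-coherence; transferring the internal Hom is the one delicate point, because ${\Hc}om_\bullet$ computed naively over $\Oc_X$ need not be quasi-coherent. The remedy, already exhibited before the statement, is to apply the coherator $\Dc_X\0_\Dc\zG(X,-)$, which identifies the closed structure on the quasi-coherent side with the coherator of the closed structure of ${\tt DG_+Mod}(\Dc_X)$ (the latter being closed symmetric monoidal by \cite{Scha,Schn}); correspondingly, the closed structure on $\tt DG\Dc M$ is the transfer of the same.

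The hard part is not the chain-complex step -- that is a black-box application of \cite[Lemma 3.15]{LH} -- but the bookkeeping forced by the fact that $\0$ and internal Hom are taken over $\Oc$ rather than over $\Dc$: one must check at each stage that the $\Oc$-linear constructions carry the canonical $\Dc$-action and that the isomorphisms (\ref{Closed}) and (\ref{IntHomDG}) are genuinely $\Dc$-linear and not merely $\Oc$- or $\Z$-linear. Given the preceding work, this reduces to the routine verifications of {\small AB3}/{\small AB3*} and of the coherator identity.
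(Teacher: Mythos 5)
Your proposal is correct and follows essentially the same route as the paper: establish the closed symmetric monoidal structure at the module level via (\ref{Closed}) and (\ref{IntHomDM}), verify {\small AB3}/{\small AB3*}, promote to chain complexes by \cite[Lemma 3.15]{LH}, and handle the quasi-coherent case by transferring the closed structure and identifying it with the coherator of the internal Hom of ${\tt DG_+Mod}(\Dc_X)$. The paper's text before the Proposition is precisely this assembly, so nothing further is needed.
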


To examine the axiom {\small MMC1}, we need the next proposition. As up till now, we write $\Dc$ (resp., $\Oc$) instead of $\zG(X,\Dc_X)$ (resp., $\zG(X,\Oc_X)$).

\begin{prop}\label{Projectivity} If the variety $X$ is smooth affine, the module $\Dc$ is projective as $\Dc$- and as $\Oc$-module.\end{prop}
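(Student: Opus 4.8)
The assertion that $\Dc$ is projective over $\Dc$ is immediate: every unital ring is free of rank one, hence projective, as a module over itself. The substance of the statement is therefore the projectivity of $\Dc$ as an $\Oc$-module, and the plan is to exploit the order filtration of the ring of differential operators together with the symbol isomorphism, reducing everything to the projectivity of the module of vector fields.

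First I recall the order (symbol) filtration $\Oc=\Dc^{\leq 0}\subset\Dc^{\leq 1}\subset\cdots$ of $\Dc$ by $\Oc$-submodules of operators of order at most $n$, with $\Dc=\bigcup_{n}\Dc^{\leq n}$. For a smooth variety the principal symbol furnishes the $\Oc$-linear isomorphisms $\Dc^{\leq n}/\Dc^{\leq n-1}\simeq \Sc^n_\Oc(\Theta)$, where $\Theta:=\Der_\K(\Oc)$ is the module of global vector fields; equivalently $\on{gr}\Dc\simeq \Sc_\Oc(\Theta)$ as graded $\Oc$-algebras (see, e.g., \cite{HTT}). Since $X$ is affine, the global section functor is exact on quasi-coherent $\Oc_X$-modules, so these identifications, valid for the sheaf $\Dc_X$, descend to the ring $\Dc$ of total sections.

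Next I invoke smoothness of $X$: the module $\Omega^1_{\Oc/\K}$ of Kähler differentials is finitely generated projective over $\Oc$, whence its $\Oc$-dual $\Theta=\Der_\K(\Oc)$ is finitely generated projective as well. A symmetric power of a projective module is again projective (a direct summand $P$ of a free module $F$ has $\Sc^n_\Oc(P)$ a direct summand of $\Sc^n_\Oc(F)$, which is free), so every $\Sc^n_\Oc(\Theta)$ is a projective $\Oc$-module. Consequently each short exact sequence $0\to \Dc^{\leq n-1}\to \Dc^{\leq n}\to \Sc^n_\Oc(\Theta)\to 0$ splits, its cokernel being projective. Choosing compatible splittings inductively gives $\Dc^{\leq n}\simeq\bigoplus_{k=0}^{n}\Sc^k_\Oc(\Theta)$, compatibly with the inclusions, and passing to the colimit yields an $\Oc$-linear isomorphism $\Dc\simeq \bigoplus_{n\geq 0}\Sc^n_\Oc(\Theta)=\Sc_\Oc(\Theta)$ (of $\Oc$-modules only, not of rings). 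A direct sum of projective $\Oc$-modules is projective, and this concludes the argument.

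I expect the only genuinely delicate points to be the symbol isomorphism $\on{gr}\Dc\simeq\Sc_\Oc(\Theta)$ and the projectivity of $\Theta$, which are precisely where the smoothness hypothesis enters; the remaining steps—the splitting of the filtration and the resulting direct-sum decomposition—are formal once the graded pieces are known to be projective. One should also keep in mind the affineness assumption, used to pass from the sheaf-level statements to the ring $\Dc=\Dc_X(X)$ and the module $\Oc=\Oc_X(X)$.
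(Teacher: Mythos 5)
Your proof is correct and follows essentially the same route as the paper's: both use the order filtration of $\Dc$, establish that the associated graded pieces are projective $\Oc$-modules (the paper via local freeness of $\text{Gr}_i\Dc_X$ together with affineness of $X$, you via the explicit symbol isomorphism with $\Sc^n_\Oc(\Theta)$ and the projectivity of $\Theta$ coming from smoothness), split the resulting short exact sequences, and conclude that $\Dc$ is $\Oc$-projective as a direct sum of projective modules. The only difference is presentational — you work mostly at the level of the ring $\Dc$ after descending once, while the paper works sheaf-theoretically and applies $\zG(X,-)$ at each stage — and this does not affect the substance of the argument.
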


\newcommand{\te}{\text}

\begin{proof} Projectivity of $\Dc$ as $\Dc$-module is obvious, since $\te{Hom}_\Dc(\Dc,-)\simeq \Id(-)$. Recall now that the sheaf $\Dc_X$ of differential operators is a filtered sheaf $\text{F}\,\Dc_X$ of $\Oc_X$-modules, with filters defined by $$\text{F}_{-1}\Dc_X=\{0\}\quad\text{and}\quad\text{F}_i\Dc_X=\{D\in\Dc_X:[D,\Oc_X]\subset \text{F}_{i-1}\Dc_X\}\;:$$ $\varinjlim_i\text{F}_i\Dc_X=\Dc_X$. The graded sheaf $\text{Gr}\,\Dc_X$ associated to $\text{F}\,\Dc_X$ is the sheaf, whose terms are defined by $$\text{Gr}_i\Dc_X=\text{F}_i\Dc_X/\text{F}_{i-1}\Dc_X\;.$$ Consider now, for $i\in\N$, the short exact sequence of $\Oc_X$-modules $$0\to \te{F}_{i-1}\Dc_X\to \te{F}_i\Dc_X\to \te{Gr}_i\Dc_X\to 0\;.$$ Due to the local freeness of $\Dc_X$, this is also an exact sequence in ${\tt qcMod}(\Oc_X)$. Since $X$ is affine, we thus get the exact sequence \be\label{SplitSeq} 0\to \zG(X,\te{F}_{i-1}\Dc_X)\to \zG(X,\te{F}_i\Dc_X)\to \zG(X,\te{Gr}_i\Dc_X)\to 0\;\ee in ${\tt Mod}(\Oc)$ -- in view of the equivalence of Abelian categories $$\zG(X,-):{\tt qcMod}(\Oc_X)\rightleftarrows {\tt Mod}(\Oc)\;.$$ However, the functor $\zG(X,-)$ transforms a locally free $\Oc_X$-module of finite rank into a projective finitely generated $\Oc$-module. We can therefore conclude that $\zG(X,\te{Gr}_i\Dc_X)$ is $\Oc$-projective, what implies that the sequence (\ref{SplitSeq}) is split, i.e., that $$\zG(X,\te{F}_i\Dc_X)=\zG(X,\te{Gr}_i\Dc_X)\oplus\zG(X,\te{F}_{i-1}\Dc_X)\;.$$ An induction and commutation of the left adjoint $\zG(X,-)$ with colimits allow to conclude that $$\Dc=\zG(X,\varinjlim_i\text{F}_i\Dc_X)=\varinjlim_i\bigoplus_{j=0}^i\zG(X,\te{Gr}_j\Dc_X)=\bigoplus_{j=0}^\infty\zG(X,\te{Gr}_j\Dc_X)\;.$$ Finally, $\Dc$ is $\Oc$-projective as direct sum of $\Oc$-projective modules.
\end{proof}

\begin{thm} The category $\tt DG\Dc M$ is a symmetric monoidal model category. \end{thm}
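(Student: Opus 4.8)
The plan is to verify the two compatibility axioms {\small MMC1} and {\small MMC2}, exploiting that $\tt DG\Dc M$ is already a finitely generated model category \cite{DPP} and is closed symmetric monoidal by the proposition established above. Since the model structure is cofibrantly generated, {\small MMC1} need only be checked on the sets $I$ and $J$ of generating cofibrations and generating trivial cofibrations, which are the usual disk and sphere complexes built from the free rank-one module $\Dc$: writing $S^n$ for the complex with $\Dc$ concentrated in degree $n$, and $D^n$ for the contractible complex $\Dc\xrightarrow{\Id}\Dc$ placed in degrees $n$ and $n-1$, one has $I=\{S^{n-1}\hookrightarrow D^n\}$ and $J=\{0\to D^n\}$.

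For {\small MMC1} I would use the elementary fact that, for two monomorphisms $f,g$, the cokernel of the pushout-product satisfies $\Coker(f\Box g)\cong\Coker f\otimes_\Oc\Coker g$, and that $f\Box g$ is again a monomorphism as soon as these cokernels are degree-wise $\Oc$-flat. Everything therefore rests on the key lemma that the diagonal $\Dc$-module $\Dc\otimes_\Oc\Dc$ is projective over $\Dc$. I would prove this in two steps: first, the standard straightening isomorphism of left $\Dc$-modules $\Dc\otimes_\Oc M\cong\Dc\otimes_\Oc M_\Oc$ — where on the right $\Dc$ acts only on the first factor and $M_\Oc$ denotes $M$ with its underlying $\Oc$-action — reduces the diagonal module to $\Dc\otimes_\Oc\Dc_\Oc$; second, since $\Dc_\Oc$ is $\Oc$-projective by Proposition \ref{Projectivity}, it is a direct summand of a free $\Oc$-module, so $\Dc\otimes_\Oc\Dc_\Oc$ is a direct summand of $\Dc\otimes_\Oc\Oc^{(I)}=\Dc^{(I)}$ and hence $\Dc$-projective. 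For generators $f,g\in I$ this gives $\Coker(f\Box g)\cong\Dc\otimes_\Oc\Dc$ (up to shift), which is degree-wise $\Dc$-projective; moreover $\Dc$-projectivity entails $\Oc$-flatness (again via Proposition \ref{Projectivity}), so $f\Box g$ is injective and thus a cofibration. For the trivial case $g:0\to D^n$ in $J$, the pushout-product reduces to $f\otimes\Id_{D^n}$; since $D^n$ is contractible through an $\Oc$-linear homotopy, $M\otimes_\Oc D^n$ is acyclic for every $M$, so $f\otimes\Id_{D^n}$ is a quasi-isomorphism between acyclic complexes, hence a trivial cofibration.

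For {\small MMC2}, let $q:Q\Oc\to\Oc$ be the cofibrant replacement of the unit. As $\Oc\otimes_\Oc C\cong C$, I must show that $q\otimes\Id_C:Q\Oc\otimes_\Oc C\to C$ is a quasi-isomorphism for every cofibrant $C$. A cofibrant $C$ is degree-wise $\Dc$-projective, hence degree-wise $\Oc$-projective — by Proposition \ref{Projectivity} a summand of a free $\Dc$-module is a summand of a free $\Oc$-module — so $C$ is a bounded-below complex of $\Oc$-flat modules. The mapping cone of $q$ is an acyclic bounded-below complex of $\Oc$-modules, whence $\cone(q)\otimes_\Oc C\cong\cone(q\otimes\Id_C)$ is acyclic, by the usual double-complex filtration argument that the tensor product over $\Oc$ of a bounded-below acyclic complex with a bounded-below degree-wise flat complex is acyclic. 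Therefore $q\otimes\Id_C$ is a quasi-isomorphism, which is precisely {\small MMC2}.

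The main obstacle, and the source of essentially all the subtlety, is that the monoidal product is taken over $\Oc$ rather than over $\Dc$, so the naive hope that the tensor of free modules be free fails: $\Dc\otimes_\Oc\Dc\neq\Dc$. The entire argument hinges on replacing this by the weaker but sufficient statement that $\Dc\otimes_\Oc\Dc$ is $\Dc$-projective, for which the straightening isomorphism together with the $\Oc$-projectivity of $\Dc$ from Proposition \ref{Projectivity} are exactly what is required. Once this linear-algebra input over $\Dc$ is secured, both axioms follow from standard homotopical manipulations of bounded-below chain complexes.
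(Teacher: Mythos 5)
Your proposal is correct, and for {\small MMC1} it follows essentially the same route as the paper: reduce to the generating (trivial) cofibrations, identify the cokernel of $\zi_m\Box\zi_n$ as a shifted copy of $\Dc\otimes_\Oc\Dc$, and deduce its $\Dc$-projectivity from the $\Oc$-projectivity of $\Dc$ (Proposition \ref{Projectivity}). Your route to that projectivity differs cosmetically: you invoke the straightening isomorphism $\Dc\otimes_\Oc M\cong\Dc\otimes_\Oc M_\Oc$ and the summand-of-free argument, whereas the paper uses the adjunction $\text{Hom}_\Dc(\Dc\otimes_\Oc\Dc,-)\cong\text{Hom}_\Oc(\Dc,-)$ directly; these are two faces of the same closed-structure isomorphism (\ref{Closed}), so nothing new is needed. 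Your treatment of the trivial case via the $\Oc$-linear contracting homotopy of $D^n$ is a pleasant simplification of the paper's appeal to K\"unneth plus $\Oc$-flatness of $\Dc$. Where you genuinely diverge is {\small MMC2}: the paper defers it to Lemma \ref{WeqCof}, which it later deduces from the transfinite-filtration machinery (Lemma \ref{L:filtrweq}) built for the {\small HAC} axioms, while you give a short self-contained argument — cofibrant objects are degree-wise $\Dc$-projective, hence degree-wise $\Oc$-flat, and the mapping cone of $q\otimes\Id_C$ is the tensor product of a bounded-below acyclic complex with a bounded-below degree-wise flat one, hence acyclic. This buys a direct and elementary proof of {\small MMC2} (and in fact of all of Lemma \ref{WeqCof}) at the cost of not reusing the filtration lemmas the paper needs anyway later; both arguments are sound, and yours makes the theorem logically independent of Section 3.
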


\proof[Proof of Axiom {\small\em MMC1}] In this proof, we omit the bullets in the notation of complexes. We have to show that the pushout product of two generating cofibrations is a cofibration and that the latter is trivial if one of its factors is a generating trivial cofibration. Recall that the generating cofibrations (resp., generating trivial cofibrations) in $\tt DG\Dc M$ are the canonical maps $$\zi_0:0\to S^0\quad\text{and}\quad\iota_n:S^{n-1}\to D^n\; (n\ge 1)$$ \be\label{GenTriCof}\text{(resp., }\;\zz_n: 0\to D^n\; (n\ge 1)\;)\;.\ee Here $D^n$ is the $n$-disc, i.e., the chain complex \be\label{Disc}D^n: \cdots \to 0\to 0\to \stackrel{(n)}{\Dc} \to \stackrel{(n-1)}{\Dc}\to 0\to \cdots\to \stackrel{(0)}{0}\;(n\ge 1)\;,\ee whereas $S^n$ is the $n$-sphere, i.e., the chain complex \be\label{Sphere}S^n: \cdots \to 0\to 0\to \stackrel{(n)}{\Dc}\to 0\to \cdots\to \stackrel{(0)}{0}\;(n\ge 0)\;.\ee The map $\zi_n$ vanishes, except in degree $n-1$, where it is the identity map $\Id$; the differential in $D^n$ vanishes, except in degree $n$, where it is the desuspension map $s^{-1}$.\medskip

{\it Step 1}. We consider the case of $\zi_m\Box \zi_n$ ($m,n\ge 1$) (the cases $m$ or $n$ is zero and $m=n=0$ are similar but easier), i.e., we prove that the pushout product in the diagram

\begin{equation}\label{e:chpushout}
\begin{tikzcd}[column sep=large]
S^{m-1}\otimes S^{n-1} \arrow{r}{\zi_m\otimes \Id} \arrow{d}[swap]{\Id\otimes \zi_n} & D^m\otimes S^{n-1} \arrow{d}{} \arrow[bend left]{ddr}{\Id\otimes \zi_n}\\
S^{m-1}\otimes D^n \arrow[bend right]{drr}[swap]{\zi_m\otimes \Id} \arrow{r}{} & \coprod:=D^m\0 S^{n-1}\coprod_{S^{m-1}\0 S^{n-1}}S^{m-1}\0 D^n \arrow[dashed]{dr}[swap]{\zi_m\Box \zi_n}\\
& & D^m\otimes D^n
\end{tikzcd}\;
\ee

\noindent is a cofibration.

Remark that $$S^{m-1}\0 S^{n-1}:\cdots \to 0\to \stackrel{(m-1)}{\Dc}\0\stackrel{(n-1)}{\Dc}\to 0\to \cdots\to 0\;,$$ $$D^m\0 S^{n-1}:\cdots \to 0\to \stackrel{(m)}{\Dc}\0\stackrel{(n-1)}{\Dc}\to \stackrel{(m-1)}{\Dc}\0\stackrel{(n-1)}{\Dc}\to 0\to \cdots\to {0}\;,$$ $$S^{m-1}\0 D^{n}:\cdots \to 0\to \stackrel{(m-1)}{\Dc}\0\stackrel{(n)}{\Dc}\to \stackrel{(m-1)}{\Dc}\0\stackrel{(n-1)}{\Dc}\to 0\to \cdots\to {0}\;,$$ and \be\label{Coker1}D^m\0 D^{n}:\cdots \to 0\to \stackrel{(m)}{\Dc}\0\stackrel{(n)}{\Dc}\to \stackrel{(m-1)}{\Dc}\0\stackrel{(n)}{\Dc}\oplus\stackrel{(m)}{\Dc}\0\stackrel{(n-1)}{\Dc}\to \stackrel{(m-1)}{\Dc}\0\stackrel{(n-1)}{\Dc}\to 0\to \cdots\to {0}\;.\ee The non-trivial terms of the differentials are, $s^{-1}\0\Id$ in $D^m\0 S^{n-1}$, $\Id\0 s^{-1}$ in $S^{m-1}\0 D^{n}$, as well as $s^{-1}\0\Id + \Id\0 s^{-1}$ and $\Id\0 s^{-1}\oplus s^{-1}\0\Id$ in $D^m\0 D^{n}$.

In an Abelian category pushouts and pullbacks do exist. For instance, the pushout of two morphisms $f:A\to B$ and $g:A\to C$ is the cokernel $(h,k):B\oplus C\to \text{coker}(f,-g)$ of the morphism $(f,-g):A\to B\oplus C$. In the Abelian category of chain complexes in an Abelian category, and in particular in $\tt DG\Dc M$, cokernels are taken degree-wise. Hence, in degree $p\in\N$, the pushout of the chain maps $\zi_m\0\Id$ and $\Id\0\zi_n$ is the cokernel $$(h_p,k_p):(D^m\0 S^{n-1})_p\oplus (S^{m-1}\0 D^n)_p\to \text{coker}((\zi_m\0\Id)_p,-(\Id\0\zi_n)_p)\;.$$ This cokernel is computed in the category of $\Dc$-modules and is thus obtained as quotient $\Dc$-module of the direct sum $(D^m\0 S^{n-1})_p\oplus (S^{m-1}\0 D^n)_p$ by the $\Dc$-submodule generated by $$\{((\zi_m\0\Id)_p(D\0\zD),-(\Id\0\zi_n)_p(D\0\zD)):D\0\zD\in (S^{m-1}\otimes S^{n-1})_p\}\;.$$ In degree $p\neq m+n-2$, we divide $\{0\}$ out, and, in degree $p=m+n-2$, we divide the module $$\stackrel{(m-1)}{\Dc}\0\stackrel{(n-1)}{\Dc}\oplus\stackrel{(m-1)}{\Dc}\0\stackrel{(n-1)}{\Dc}$$ by the submodule $$\{(D\0\zD,-D\0\zD):D\in \stackrel{(m-1)}{\Dc},\zD\in\stackrel{(n-1)}\Dc\}\;.$$ This shows that the considered pushout is \be\label{Coker2}\coprod: \cdots\to 0\to \stackrel{(m)}{\Dc}\0\stackrel{(n-1)}{\Dc}\oplus \stackrel{(m-1)}{\Dc}\0\stackrel{(n)}{\Dc}\to \stackrel{(m-1)}{\Dc}\0\stackrel{(n-1)}{\Dc}\to 0\to\cdots\to 0\;.\ee The non-trivial term of the pushout differential is direct sum differential $s^{-1}\0\Id\oplus \Id\0 s^{-1}$ viewed as valued in $\Dc\0\Dc$.

It is clear that the unique chain map $\zi_m\Box\zi_n$, which renders the two triangles commutative, vanishes, except in degrees $m+n-1$ and $m+n-2$, where it coincides with the identity. Recall now that the cofibrations of $\tt DG\Dc M$ are the injective chain maps with degree-wise projective cokernel. In view of (\ref{Coker2}) and (\ref{Coker1}), the cokernel of the injective map $\zi_m\Box\zi_n$ vanishes in all degrees, except in degree $m+n$, where it is equal to $\Dc\0_\Oc\Dc$. It thus suffices to show that $\Dc\0_\Oc\Dc$ is $\Dc$-projective, i.e., that $\text{Hom}_\Dc(\Dc\0_\Oc\Dc,-):{\tt Mod}(\Dc)\to {\tt Ab}$ is an exact functor valued in Abelian groups. In view of (\ref{Closed}), we have $$\text{Hom}_\Dc(\Dc\0_\Oc\Dc,-)\simeq\text{Hom}_{\Dc}(\Dc,\text{Hom}_{\Oc}(\Dc,-))\simeq \text{Hom}_{\Oc}(\Dc,-):{\tt Mod}(\Dc)\to {\tt Ab}\;.$$ It follows from Proposition \ref{Projectivity} that $\text{Hom}_{\Oc}(\Dc,-)$ is an exact functor ${\tt Mod}(\Oc)\to {\tt Mod}(\Oc)$, so also an exact functor ${\tt Mod}(\Dc)\to {\tt Ab}$.\medskip

{\it Step 2}. Take now the pushout product $\zz_m\Box\zi_n$ ($m,n\ge 1$, see (\ref{GenTriCof})) (the other cases are analogous). It is straightforwardly seen that the considered chain map is the map $$\zz_m\Box\zi_n:D^m\0 S^{n-1}\to D^m\0 D^n\;,$$ which vanishes in all degrees, except in degrees $m+n-1$ and $m+n-2$, where it coincides with the identity. To see that this cofibration is trivial, i.e., induces an isomorphism in homology, we compute the homologies $\te{H}(D^m\0 S^{n-1})$ and $\te{H}(D^m\0 D^n)$. Since $D^m$ is acyclic and since $\Dc$ is $\Oc$-projective, hence, $\Oc$-flat (this fact has also been proven independently in \cite{DPP}), it follows from K\"unneth's formula \cite[Theorem 3.6.3]{Wei} that $\te{H}(D^m\0 S^{n-1})=\te{H}(D^m\0 D^n)=0\,$. Therefore, the map $\te{H}(\zz_m\Box\zi_n)$ is a $\Dc$-module isomorphism. \endproof

\proof[Proof of Axiom {\small\em MMC2}] Axiom {\small MMC2} holds for $\tt DG\Dc M$, thanks to the following more general result, which will be proven independently in \ref{tocum}.

\begin{lem}\label{WeqCof}
Let $f:A\rightarrow B$ be a weak equivalence in $\tt DG\Dc M$ and let $M$ be a cofibrant object. Then $f\otimes \Id_M: A\otimes M\rightarrow B\otimes M$ is again a weak equivalence.
\end{lem}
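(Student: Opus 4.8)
The plan is to reduce the statement to a flatness computation over $\Oc$. The key preliminary observation is that, by the very definition of the cofibrations of ${\tt DG\Dc M}$ recalled above (injective chain maps with degreewise projective cokernel), an object $M$ is cofibrant precisely when the initial map $0\to M$ is a cofibration, i.e.\ precisely when $M$ is a non-negatively graded chain complex of projective $\Dc$-modules. First I would upgrade this projectivity to $\Oc$-flatness: every projective $\Dc$-module is a direct summand of a free module $\Dc^{(I)}$, and since $\Dc$ is $\Oc$-projective by Proposition \ref{Projectivity}, the module $\Dc^{(I)}$ is $\Oc$-projective, whence so is any of its direct summands. Thus each component $M_k$ is $\Oc$-projective, in particular $\Oc$-flat.

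Next I would pass to mapping cones. Recall that a chain map is a weak equivalence (quasi-isomorphism) if and only if its mapping cone is acyclic. Tensoring over $\Oc$ commutes with the formation of cones, so there is a natural isomorphism of complexes $\cone(f\otimes\Id_M)\simeq\cone(f)\otimes M$; indeed, comparing the two sides degree by degree gives $\cone(f\otimes\Id_M)_n=(A\otimes M)_{n-1}\oplus(B\otimes M)_n=(\cone(f)\otimes M)_n$, and the differentials agree with the usual sign conventions. Since $f$ is a weak equivalence, $C:=\cone(f)$ is acyclic, and since $A,B$ are non-negatively graded so is $C$. It therefore suffices to prove that $C\otimes M$ is acyclic.

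This is where the $\Oc$-flatness of $M$ enters. I would view $C\otimes M$ as the total complex of the first-quadrant double complex $(C_p\otimes_\Oc M_q)_{p,q\ge 0}$ and run the spectral sequence obtained by taking homology first in the $C$-direction. Because each $M_q$ is $\Oc$-flat, the functor $-\otimes_\Oc M_q$ is exact and hence commutes with homology, so the $E^1$-page has entries $H_p(C)\otimes_\Oc M_q=0$. The spectral sequence thus degenerates at a vanishing $E^1$-page; as the double complex is concentrated in the first quadrant, the spectral sequence converges, and we conclude $H_\ast(C\otimes M)=0$. (Equivalently, one may invoke the K\"unneth spectral sequence, exactly as in the proof of {\small MMC1} above.) Hence $\cone(f\otimes\Id_M)$ is acyclic and $f\otimes\Id_M$ is a weak equivalence.

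The main obstacle---and the only place where the special nature of $\Dc$-modules is used---is the first step: the passage from cofibrancy to degreewise $\Oc$-flatness. Everything rests on Proposition \ref{Projectivity} (that $\Dc$ is $\Oc$-projective), which fails for a general non-commutative ring and is exactly what makes the tensor product taken over $\Oc$ behave well homologically. Once degreewise $\Oc$-flatness of cofibrant objects is in hand, the remainder is standard homological algebra for bounded-below complexes, requiring only care with the signs in the cone isomorphism and with the convergence of the first-quadrant spectral sequence.
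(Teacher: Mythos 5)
Your proof is correct, but it organizes the second half differently from the paper. Both arguments hinge on the same two facts: that a cofibrant $M$ is degreewise $\Dc$-projective, hence degreewise $\Oc$-flat (your derivation --- a projective $\Dc$-module is an $\Oc$-direct summand of some $\Dc^{(I)}$, which is $\Oc$-projective by Proposition \ref{Projectivity} --- is if anything cleaner than the paper's, which deduces $\Oc$-flatness of $M_k$ from $\Dc$-flatness of $M_k$ together with $M_k\0_\Dc\Dc\0_\Oc-\simeq M_k\0_\Oc-$), and that the acyclicity of $\op{Mc}(f)\0 M$ follows from this degreewise flatness. Where you differ is in how the second fact is extracted. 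The paper filters $M$ by its truncations $M_{\le k}$, observes that $f\0\Id_M$ is the colimit of the maps $f\0\Id_{M_{\le k}}$, and invokes its transfinite filtration machinery (Lemma \ref{L:filtrweq}, hence Lemma \ref{L:limitord}) to reduce to the associated graded pieces $f\0\Id_{M_k}$, each of which is then handled by the cone-plus-K\"unneth argument with $M_k$ a single module with zero differential. You instead form the cone of the full map at once, identify it with $\op{Mc}(f)\0 M$, and kill its homology with the first-quadrant double-complex spectral sequence, whose $E^1$-page already vanishes because each $-\0_\Oc M_q$ is exact. Your route is shorter and self-contained --- for an $\N$-indexed filtration the transfinite lemmas are overkill, and convergence for a first-quadrant double complex is elementary --- while the paper's route has the advantage of reusing lemmas it must develop anyway for properness (Theorem \ref{T:leftproper}) and the later HAC conditions. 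Both are complete proofs of the statement.
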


\section{Monoidal model structure on modules over differential graded $\Dc$-algebras}

\subsection{Modules over commutative monoids} It turned out that {\bf $\Dc$-geometric Koszul-Tate resolutions} \cite{PP} are specific objects of the category $$\tt CMon(\tt Mod_{\tt DG\Dc M}(\Ac))$$ of commutative monoids in the category $\tt Mod_{\tt DG\Dc M}(\Ac)$ of modules in $\tt DG\Dc M$ (see Definition \ref{mod}) over an object $\Ac$ of the category $\tt CMon(DG\Dc M)=DG\Dc A$. Moreover, it is known that {\bf model categorical Koszul-Tate resolutions} \cite{DPP2} are cofibrant replacements in the coslice category $$\Ac\downarrow \tt DG\Dc A\;.$$ The fact that the latter are special $\Dc$-geometric Koszul-Tate resolutions seems to confirm the natural intuition that there is an isomorphism of categories $$\tt CMon(\tt Mod_{\tt DG\Dc M}(\Ac))\simeq \Ac\downarrow \tt DG\Dc A\;.$$ Despite the apparent evidence, this equivalence will be proven in detail below (note that, since in this proof the unit elements of the commutative monoids and of the differential graded $\Dc$-algebras play a crucial role, a similar equivalence for non-unital monoids and non-unital algebras does not hold). Eventually it is clear that an object of the latter under-category is dual to a relative derived affine $X$-$\Dc_X$-scheme, where $X$ is the fixed underlying smooth affine algebraic variety (see above). Similar spaces appear \cite{DPP2} in the classical Koszul-Tate resolution, where vector bundles are pulled back over a vector bundle with the same base manifold $X$.\medskip

We first recall the definition of $\tt Mod_{\tt C}(\Ac)$ and explain that this category is closed symmetric monoidal.

\begin{Defi}\label{mod}
Let $({\tt C}, \otimes,\te{\em I},\te{\em \underline{Hom}})$ be a closed symmetric monoidal category with all small limits and colimits. Consider an (a commutative) algebra in $\tt C$, i.e., a commutative monoid $(\Ac,\zm,\zh)$. The corresponding algebra morphisms are defined naturally and the category of algebras in $\tt C$ is denoted by $\tt Alg_C$. A (left) {\em $\Ac$-module in $\tt C$} is a $\tt C$-object $M$ together with a ${\tt C}$-morphism $\zn:\Ac\otimes M\rightarrow M$, such that the usual associativity and unitality diagrams commute. {\em Morphisms of $\Ac$-modules in $\tt C$} are also defined in the obvious manner and the category of $\Ac$-modules in $\tt C$ is denoted by $\tt Mod_{\tt C}(\Ac)$.
\end{Defi}

The category of right $\Ac$-modules in $\tt C$ is defined analogously. Since $\Ac$ is commutative, the categories of left and right modules are equivalent (one passes from one type of action to the other by precomposing with the braiding `com').\medskip

The tensor product $\0_\Ac$ of two modules $M',M''\in\tt Mod_{C}(\Ac)$ is defined as usual \cite[VII.4, Exercise 6]{MacL} as the coequalizer in $\tt C$ of the maps $$\psi':=(\zn_{M'}\0\Id_{M''})\circ(\te{com}\0\Id_{M''}),\psi'':=\Id_{M'}\0\zn_{M''}:(M'\otimes \Ac)\otimes M''\simeq M'\0(\Ac\0 M'') \rightrightarrows M'\otimes M''\;.$$ Since $\Ac\in\tt Alg_C$ is commutative, $M'\otimes_\Ac M''$ inherits an $\Ac$-module structure from those of $M'$ and $M''$ \cite{TV08}.\medskip

Even for an abstract $\tt C$, one can further define an internal $\te{\underline{Hom}}_\Ac$ in $\tt Mod_{\tt C}(\Ac)$, see Appendix \ref{IntHomMod}. Moreover, the expected adjointness property holds, $$\te{Hom}_\Ac(M\0_\Ac M',M'')\simeq \te{Hom}_\Ac(M,\te{\underline{Hom}}_\Ac(M',M''))\;,$$ and the category of $\Ac$-modules in $\tt C$ has all small limits and colimits. We thus get the

\begin{prop}\cite{TV08}\label{mod2} Exactly as the original category $(\tt C,\0,\te{\em I},\te{\em\underline{Hom}})$, the category $$(\tt Mod_{\tt C}(\Ac),\0_\Ac,\Ac,\te{\em\underline{Hom}}_\Ac)$$ of modules in $\tt C$ over $\Ac\in\tt Alg_C$ is closed symmetric monoidal and contains all small limits and colimits. \end{prop}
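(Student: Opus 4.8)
The statement is classical \cite{TV08}; in view of the preparatory discussion above, the proof amounts to assembling the available pieces. The plan is to realize ${\tt Mod}_{\tt C}(\Ac)$ as the Eilenberg--Moore category of the monad $T=\Ac\0-$ on $\tt C$, to deduce completeness and cocompleteness from the (co)continuity of $T$, then to build the symmetric monoidal structure from the defining coequalizers, and finally to invoke the internal Hom of Appendix \ref{IntHomMod} together with the adjunction recalled above in order to obtain closedness.

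\textbf{Limits and colimits.} The forgetful functor $U:{\tt Mod}_{\tt C}(\Ac)\to{\tt C}$ is precisely the forgetful functor of the Eilenberg--Moore category of $T=\Ac\0-$, since a $T$-algebra is an object $M$ equipped with an action $\zn:\Ac\0 M\to M$ satisfying the associativity and unitality diagrams. Any such forgetful functor creates limits, so ${\tt Mod}_{\tt C}(\Ac)$ is complete, with limits computed underlying. For colimits, closedness of $\tt C$ makes $T=-\0\Ac$ a left adjoint, with right adjoint $\underline{\mathrm{Hom}}(\Ac,-)$, so $T$ preserves all colimits. A short argument then shows that $U$ creates colimits of every shape preserved by $T$: given a diagram $D$ with $c=\colim\,UD$ in $\tt C$, the identity $Tc=\colim\,TUD$ lets the action maps assemble into a $T$-algebra structure $Tc\to c$ realizing $c$ as $\colim\,D$. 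Hence ${\tt Mod}_{\tt C}(\Ac)$ has all small colimits, again computed underlying.

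\textbf{Symmetric monoidal structure.} With $M\0_\Ac N$ defined as the coequalizer displayed above, one checks first that $\0_\Ac$ is a bifunctor carrying the $\Ac$-action induced by either factor. The decisive input is once more closedness of $\tt C$: each endofunctor $-\0 X$ is a left adjoint, hence preserves the coequalizers defining $\0_\Ac$. This compatibility lets one transport the associativity, unit and symmetry constraints from $(\tt C,\0,\text{I})$ to $(\tt Mod_{\tt C}(\Ac),\0_\Ac,\Ac)$. Concretely, the unit isomorphism $\Ac\0_\Ac M\cong M$ holds because the pair $\Ac\0\Ac\0 M\rightrightarrows\Ac\0 M$ is a split coequalizer (split by the unit $\zh$) with value $M$; the braiding is inherited from that of $\tt C$; and the associativity isomorphism is obtained by exhibiting both $(M\0_\Ac N)\0_\Ac P$ and $M\0_\Ac(N\0_\Ac P)$ as one and the same colimit of the two evident actions on $M\0\Ac\0 N\0\Ac\0 P$, the identification being legitimate precisely because $\0$ commutes with the relevant coequalizers. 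The coherence axioms then reduce to those already valid in $\tt C$.

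\textbf{Closedness and the main difficulty.} The internal Hom $\underline{\mathrm{Hom}}_\Ac$ is built in Appendix \ref{IntHomMod}, and the adjunction $\text{Hom}_\Ac(M\0_\Ac M',M'')\cong\text{Hom}_\Ac(M,\underline{\mathrm{Hom}}_\Ac(M',M''))$ recalled above exhibits $-\0_\Ac M'$ as left adjoint to $\underline{\mathrm{Hom}}_\Ac(M',-)$ for each $M'$; this is exactly the closedness of $(\tt Mod_{\tt C}(\Ac),\0_\Ac,\Ac,\underline{\mathrm{Hom}}_\Ac)$. The main obstacle is not any single step but the coherence bookkeeping in the monoidal part, i.e.\ checking that constraints assembled from iterated coequalizers satisfy the pentagon, triangle and hexagon diagrams. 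The efficient way to avoid a proliferation of diagram chases is to describe the $n$-fold product $M_1\0_\Ac\cdots\0_\Ac M_n$ symmetrically as a single (reflexive) coequalizer of the two actions on $M_1\0\cdots\0 M_n$ with copies of $\Ac$ inserted, to show it agrees with every bracketing, and to read off associativity and coherence from the universal property. Every such identification rests on the single structural fact established earlier: $\tt C$ is closed, so $\0$ preserves colimits in each variable.
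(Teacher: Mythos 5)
Your proposal is correct and assembles exactly the ingredients the paper itself relies on: the coequalizer definition of $\otimes_{\mathcal A}$, the internal Hom of the appendix with its adjunction, and the fact that closedness of the base category makes $\otimes$ cocontinuous in each variable. The paper gives no proof of this proposition, merely citing \cite{TV08} after the preparatory discussion, so your write-up (including the Eilenberg--Moore realization for limits and colimits, which the paper only introduces later for other purposes) is a legitimate filling-in of the standard argument rather than a divergence from it.
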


\begin{prop}\label{CMonUnder} For any nonzero $\Ac\in\tt DG\Dc A\,,$ there exists an isomorphism of categories $$\tt CMon(\tt Mod_{\tt DG\Dc M}(\Ac))\simeq \Ac\downarrow \tt DG\Dc A\;,$$ where notation has been introduced above.\end{prop}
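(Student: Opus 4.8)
The plan is to exhibit the isomorphism by constructing two functors that leave the underlying $\tt DG\Dc M$-object untouched and merely repackage its algebraic structure, and to show they are strict (not merely pseudo-) inverses. Since the monoidal unit of $\tt Mod_{\tt DG\Dc M}(\Ac)$ is $\Ac$ itself (Proposition \ref{mod2}), a commutative monoid in $\tt Mod_{\tt DG\Dc M}(\Ac)$ is a datum $(B,\mu,\eta)$ with $\eta:\Ac\to B$; the whole point is that this $\eta$ is exactly the structure map of an object of the under-category. I would argue entirely via the universal property of the coequalizer defining $\0_\Ac$, rather than elementwise, writing $\eta_\Ac:\Oc\to\Ac$ for the algebra unit of $\Ac$ and $\eta_B:\Oc\to B$ for that of $B$.

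First I would define $\Psi:\Ac\downarrow\tt DG\Dc A\to\tt CMon(\tt Mod_{\tt DG\Dc M}(\Ac))$. Given $\phi:\Ac\to B$, equip $B$ with the $\Ac$-action $\nu_B:=\mu_B\circ(\phi\0\Id_B)$; the module axioms are immediate from associativity and unitality of $\mu_B$ and from $\phi$ being an algebra map. The essential step, and the one I expect to be the main obstacle, is to show that $\mu_B:B\0_\Oc B\to B$ coequalizes the pair $(\psi',\psi'')$ (with $M'=M''=B$) defining $B\0_\Ac B$, so that it descends to $\bar\mu:B\0_\Ac B\to B$. Here the hypotheses are genuinely consumed: a diagram chase gives $\mu_B\circ\psi'=\mu_B\circ\bigl((\mu_B\circ(\Id_B\0\phi))\0\Id_B\bigr)=\mu_B\circ\psi''$, using associativity of $\mu_B$, naturality of the braiding, commutativity in the form $\mu_B\circ\text{com}=\mu_B$, and the definition $\nu_B=\mu_B\circ(\phi\0\Id_B)$. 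With $\phi$ itself as unit, $(B,\bar\mu,\phi)$ is then a commutative monoid in $\Ac$-modules.

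Next I would define the opposite functor $\Phi$. Given a commutative monoid $(B,\mu,\eta)$ in $\Ac$-modules, set $\tilde\mu:=\mu\circ\pi$, where $\pi:B\0_\Oc B\to B\0_\Ac B$ is the canonical projection, and $\tilde\eta:=\eta\circ\eta_\Ac:\Oc\to B$; one checks $(B,\tilde\mu,\tilde\eta)\in\tt DG\Dc A$ and that $\eta:\Ac\to B$ is an algebra map, yielding $(\Ac\xrightarrow{\eta}B)$. On morphisms both functors act as the identity on the underlying $\tt DG\Dc M$-map, so the content is to check that ``$\Ac$-linear monoid morphism'' and ``algebra morphism under $\Ac$'' are the same condition; this translation again rests on $\nu_B=\mu_B\circ(\phi\0\Id)$ together with the unit law.

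Finally I would verify $\Phi\Psi=\Id$ and $\Psi\Phi=\Id$ on the nose. For $\Phi\Psi$ the descent is unique and $\mu_B$ is already a descent of itself, so $\tilde{\bar\mu}=\mu_B$, while $\tilde\phi=\phi\circ\eta_\Ac=\eta_B$ since $\phi$ is unital. For $\Psi\Phi$ the one delicate point is that the reconstructed action $\nu':=\tilde\mu\circ(\eta\0_\Oc\Id_B)$ recovers the original $\nu$: writing $q:\Ac\0_\Oc B\to\Ac\0_\Ac B$, this follows from the standard identification of $\nu$ with $\Ac\0_\Oc B\xrightarrow{q}\Ac\0_\Ac B\simeq B$, from naturality $\pi\circ(\eta\0_\Oc\Id)=(\eta\0_\Ac\Id)\circ q$, and from the monoid unit law $\mu\circ(\eta\0_\Ac\Id)=\lambda$. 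I would close by stressing, as the Remark preceding the proposition signals, that unitality is indispensable: it is precisely $\eta:\Ac\to B$ that encodes the under-category structure map and from which the $\Ac$-action is reconstructed, so the argument breaks down in the non-unital case.
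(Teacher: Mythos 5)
Your proposal is correct and follows essentially the same route as the paper's proof: the $\Ac$-action $\nu_B=\mu_B\circ(\phi\otimes\Id_B)$, the descent of the multiplication through the coequalizer defining $\otimes_\Ac$ via $\Ac$-bilinearity, and the identification of the monoid unit $\Ac\to B$ with the under-category structure map are exactly the paper's constructions, merely phrased diagrammatically where the paper computes with elements in the explicit cokernel description of $\otimes_\Ac$. The verification that the two assignments are strict mutual inverses on objects and morphisms likewise matches the paper's argument.
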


\begin{lem}\label{SubAlg} The initial {\small\em DG$\Dc$A} $\Oc$ can be viewed as a sub-{\small\em DG$\Dc$A} of any nonzero {\small\em DG$\Dc$A} $\Ac$.\end{lem}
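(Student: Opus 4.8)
The plan is to realize the claimed embedding as the unit morphism of the monoid $\Ac$ and to show that it is injective. Recall that ${\tt DG\Dc A = CMon(DG\Dc M)}$ and that $\Oc$ is the monoidal unit of $\tt DG\Dc M$, concentrated in degree $0$ with zero differential. Hence $\Oc$, with its canonical monoid structure, is the \emph{initial} object of $\tt DG\Dc A$, and for every $\Ac=(\Ac,\zm,\zh)$ the unique ${\tt DG\Dc A}$-morphism $\Oc\to\Ac$ is exactly the unit $\zh$. It therefore suffices to prove that, whenever $\Ac\neq 0$, this $\zh$ is injective as a morphism of complexes of $\Dc$-modules; such a monomorphism identifies $\Oc$ with a sub-DG$\Dc$A of $\Ac$.

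First I would reduce to a single degree. Since $\Oc$ is concentrated in degree $0$, the chain map $\zh$ vanishes in all degrees except $0$, so injectivity of $\zh$ is equivalent to injectivity of the map $\zh_0:\Oc\to\Ac_0$. Because $\zh_0$ is $\Dc$-linear, its kernel $\Ker\zh_0$ is a $\Dc$-submodule of $\Oc$; and because $\zh_0$ is moreover a unital ring homomorphism, $\Ker\zh_0$ is in addition an ideal of $\Oc$. Thus $\Ker\zh_0$ is a $\Dc$-stable ideal of $\Oc$.

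The key input — and the only non-formal step — is that $\Oc$ is a \emph{simple} $\Dc$-module over the smooth (connected) affine variety $X$, so that its only $\Dc$-submodules are $0$ and $\Oc$ (see \cite{HTT}). Equivalently, a nonzero $\Dc$-stable ideal must equal $\Oc$, since applying sufficiently many of the differential operators in $\Dc$ to a nonzero element produces a unit. Consequently $\Ker\zh_0$ is either $0$ or all of $\Oc$.

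Finally I would exclude the case $\Ker\zh_0=\Oc$. If $\zh_0=0$, then $1_\Ac=\zh_0(1_\Oc)=0$, whence $a=a\cdot 1_\Ac=0$ for every $a\in\Ac$, i.e. $\Ac=0$, contradicting the hypothesis $\Ac\neq 0$. Therefore $\Ker\zh_0=0$, the unit $\zh:\Oc\to\Ac$ is injective, and $\Oc$ is exhibited as a sub-DG$\Dc$A of $\Ac$. The main obstacle is precisely the simplicity of $\Oc$ as a $\Dc$-module; everything else is formal, and it is exactly the $\Dc$-linearity of the unit that rules out the naive potential counterexamples $\Oc\twoheadrightarrow\Oc/\mathfrak{m}$, which are ring quotients but never $\Dc$-module quotients.
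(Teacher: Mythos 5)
Your argument is correct, but it follows a genuinely different route from the paper's. The paper disposes of the lemma formally: it writes the unique ${\tt DG\Dc A}$-morphism $\zvf:\Oc\to\Ac$, $f\mapsto f\cdot 1_\Ac$, as the composite of $\Oc\ni f\mapsto f\0 1_\Ac\in\Oc\0_\Oc\Ac$ with the unit isomorphism $\Oc\0_\Oc\Ac\simeq\Ac$, and asserts that the first factor is injective. You instead reduce to degree $0$, note that $\Ker\zh_0$ is a $\Dc$-stable ideal of $\Oc$, invoke the simplicity of $\Oc$ as a left $\Dc$-module on a smooth connected affine variety, and exclude $\Ker\zh_0=\Oc$ because $1_\Ac\neq 0$ when $\Ac\neq 0$. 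Your version is longer but it isolates the real content: under the canonical identification $\Oc\0_\Oc\Ac\simeq\Ac$, the map $f\mapsto f\0 1_\Ac$ \emph{is} $\zvf$ itself, so the paper's factorization does not by itself explain the injectivity, whereas your appeal to simplicity does, and it also exhibits exactly where the geometric hypotheses enter -- for a disconnected smooth affine $X$ the ideal of functions vanishing on one component is a nonzero proper $\Dc$-submodule of $\Oc$, and the corresponding quotient is a nonzero $\Dc$-algebra with non-injective unit, so the lemma would fail. The only point you should make explicit is that the simplicity of $\Oc$ as a $\Dc$-module requires $X$ to be connected (irreducible), which is implicit in the paper's use of the word ``variety''; with that caveat your proof is complete.
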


\begin{proof} It suffices to notice that the (unique) $\tt DG\Dc A$-morphism $\zvf:\Oc\to \Ac$, which is defined by $$\zvf(f) = \zvf(f\cdot 1_\Oc) = f\cdot \zvf(1_\Oc) = f\cdot 1_\Ac\;,$$ is injective, since it is the composition of the injective $\tt DG\Dc A$-morphism $\Oc\ni f\mapsto f\0 1_\Ac\in\Oc\0_\Oc\Ac$ and the bijective $\tt DG\Dc A$-morphism $\Oc\0_\Oc\Ac\ni f\0 a\mapsto f\cdot a\in\Ac\,$.
\end{proof}

\begin{rem} In the sequel, $\Ac$ is assumed to be a nonzero differential graded $\Dc$-algebra, whenever needed.\end{rem}

\begin{proof}[Proof of Proposition \ref{CMonUnder}] As already said, the category $\tt C= DG\Dc M$, or, better, $(\tt DG\Dc M,\0_\bullet,\Oc,\te{Hom}_\bullet)$ satisfies all the requirements of Definition \ref{mod} and the category $(\tt Mod_{\tt DG\Dc M}(\Ac),\0_\Ac,\Ac,\te{\underline{Hom}}_\Ac)$ has thus exactly the same properties, see Proposition \ref{mod2}.\medskip

Note also that in the Abelian category $\tt DG\Dc M$ of chain complexes in $\tt \Dc M$, we get $$M'\0_\Ac M''=\op{coeq}(\psi',\psi'')=\op{coker}(\psi''-\psi')\;,$$ so that $$\left(M'\0_\Ac M''\right)_n=\op{coker}_n(\psi''-\psi')=\op{coker}(\psi''_n-\psi'_n)=\left(M'\0_\bullet M''\right)_n/\op{im}(\psi''_n-\psi'_n)\;,$$ where the $\Dc$-submodule in the {\small RHS} quotient is given by $$\left\{\sum_{\op{fin}}\left(m'\0(a\triangleleft'' m'')-(-1)^{|a||m'|}(a\triangleleft' m')\0 m''\right):|a|+|m'|+|m''|=n\right\}\;,$$ where all sums are finite and where $\triangleleft'$ (resp., $\triangleleft''$) denotes the $\Ac$-action $\zn_{M'}$ (resp., $\zn_{M''}$). Hence, in all degrees, the tensors $M'\0_\Ac M''$ are the tensors $M'\0_\bullet M''$ where we identify the tensors $(a\triangleleft' m')\0 m''$ with the tensors $(-1)^{|a||m'|}\;m'\0(a\triangleleft'' m'')\,.$ It is straightforwardly checked that the differential of $M'\0_\bullet M''$ stabilizes the submodules, so that the quotient $M'\0_\Ac M''$ is again in $\tt DG\Dc M$. Moreover, a $\tt DG\Dc M$-morphism $M'\0_\bullet M''\to M$, which vanishes on the submodules, defines a $\tt DG\Dc M$-morphism $M'\0_\Ac M''\to M$. \medskip

Now, an object in $\Ac\downarrow \tt DG\Dc A$ is a $\tt DG\Dc A$-morphism $\zf:\Ac\to M$, i.e., a $\tt DG\Dc M$-morphism that respects the multiplications and units. The target is an element $M\in\tt DG\Dc M$ and is endowed with two $\tt DG\Dc M$-morphisms $\zm_M:M\0_\bullet M\to M$ and $\zh_M:\Oc\to M$, which render commutative the usual associativity, unitality and commutativity diagrams.\medskip

On the other hand, an object $N\in\tt CMon(\tt Mod_{\tt DG\Dc M}(\Ac))$ is an $N\in\tt DG\Dc M$ equipped with a $\tt DG\Dc M$-morphism $\zn:\Ac\0_\bullet N\to N$, for which the associativity and unitality diagrams commute. Moreover, it carries a commutative monoid structure, i.e., there exist $\Ac$-linear $\tt DG\Dc M$-morphisms $\zm_N:N\0_\Ac N\to N$ and $\zh_N:\Ac\to N$, such that the associativity, unitality and commutativity requirements are fulfilled.\medskip

Start from $(\zf:\Ac\to M)\in\Ac\downarrow \tt DG\Dc A$ and set $N=M$ and $\zm_M=-\star -$. Remember that $-\star -$ is $\Oc$-bilinear associative unital and graded-commutative, and define an $\Ac$-action on $M$ by \be\label{Action}a\triangleleft m:=\zn(a\0 m):=\zf(a)\star m\;.\ee In view of \cite[Proposition 6]{DPP}, the well-defined map $\zn$ is a $\tt DG\Dc M$-morphism and it can immediately be seen that $$a'\triangleleft(a''\triangleleft m)=(a' \ast a'')\triangleleft m\quad\te{and}\quad 1_\Ac\triangleleft m = m\;,$$ where $\ast$ denotes the multiplication in $\Ac$. Since, we have \be\label{ABilin}(a\triangleleft m')\star m''=\zf(a)\star m'\star m''=(-1)^{|a||m'|} m'\star\zf(a)\star m''=\ee $$(-1)^{|a||m'|}\;m'\star(a\triangleleft m'')=a\triangleleft(m'\star m'')\;,$$ the $\tt DG\Dc M$-morphism $\zm_M$ is a well-defined $\tt DG\Dc M$ and $\Ac$-linear morphism $\zm_N$ on $M\0_\Ac M$. As for $\zh_N$, note that $\zh_M$ is completely defined by $\zh_M(1_\Ac)=\zh_M(1_\Oc)=1_M$, see Lemma \ref{SubAlg}. Define now an $\Ac$-linear morphism $\zh_N:\Ac\to M$ by setting $\zh_N(1_\Ac)=1_M$. It follows that $$\zh_N(a)=a\triangleleft \zh_N(1_\Ac)=a\triangleleft 1_M=\zf(a)\star 1_M=\zf(a)\;,$$ so that $\zh_N$ is a $\tt DG\Dc M$-morphism, which coincides with $\zh_M$ on $\Oc\subset \Ac$: $$\zh_N(f)=f\cdot 1_M=f\cdot\zh_M(1_\Oc)=\zh_M(f)\;.$$

Conversely, if $N\in\tt CMon(\tt Mod_{\tt DG\Dc M}(\Ac))$ is given, set $M=N$. The composition of the $\tt DG\Dc M$-morphism $\zp:M\0_\bullet M\to M\0_\Ac M$ with the $\tt DG\Dc M$-morphism $\zm_N$ is a $\tt DG\Dc M$-morphism $\zm_M:M\0_\bullet M\to M$. The restriction of the $\tt DG\Dc M$-morphism $\zh_N:\Ac\to M$ to the subcomplex $\Oc\subset\Ac$ in $\tt\Dc M$ is a $\tt DG\Dc M$-morphism $\zh_M:\Oc\to M$. We thus obtain a differential graded $\Dc$-algebra structure on $M$ with unit $1_M=\zh_M(1_\Oc)=\zh_N(1_\Ac)$. Define now a $\tt DG\Dc M$-morphism $\zf:\Ac\to M$ by \be\label{AlgMorph}\zf(a)=\zn(a\0 1_M)=a\triangleleft 1_M\;.\ee This map visibly respects the units and, since $\zm_M=-\star-$ is $\Ac$-bilinear in the sense of (\ref{ABilin}), it respects also the multiplications.\medskip

When starting from a $\tt DG\Dc A$-morphism $\zf_1$ and applying the maps (\ref{Action}) and (\ref{AlgMorph}), we get a $\tt DG\Dc A$-morphism $$\zf_2(a)=a\triangleleft 1_M=\zf_1(a)\star 1_M=\zf_1(a)\;.$$ Conversely, we obtain $$a\triangleleft_2 m=\zf(a)\star m=(a\triangleleft_1 1_M)\star m=a\triangleleft_1\zm_N[1_M\0 m]=a\triangleleft_1 m\;,$$ with self-explaining notation.\medskip

In fact, the two maps we just defined between the objects of the categories $\Ac\downarrow {\tt DG\Dc A}$ and $\tt CMon(\tt Mod_{\tt DG\Dc M}(\Ac))$, say $F$ and $G$, are functors and even an isomorphism of categories.\medskip

Indeed, if \[
\begin{tikzcd}[column sep=1.5em]
 & M' \arrow{dd}{\zy} \\
 \Ac \arrow[swap]{dr}{\psi''}\arrow{ur}{\psi'}\\
 & M''\;,
\end{tikzcd}
\] is a morphism $\Theta$ in $\Ac\downarrow {\tt DG\Dc A}$, then $F(\Theta)=\zy$ is a morphism in $\tt CMon(\tt Mod_{\tt DG\Dc M}(\Ac))$ between the modules $F(\psi')=M'$ and $F(\psi'')=M''$, with $\Ac$-action given by $$a\triangleleft' m'=\psi'(a)\star' m'$$ and similarly for $M''$. To prove this claim, it suffices to check that $\zy$ is $\Ac$-linear: $$\zy(a\triangleleft' m')=\zy(\psi'(a)\star' m')=\psi''(a)\star'' \zy(m')=a\triangleleft''\zy(m')\;.$$ Conversely, if $\zp:N'\to N''$ is a morphism in $\tt CMon(\tt Mod_{\tt DG\Dc M}(\Ac))$ and if $\zf':\Ac\to N'$ is the morphism (\ref{AlgMorph}) in $\tt DG\Dc A$ defined by $a\mapsto a\triangleleft'1_{N'}$ and similarly for $\zf''$, then $G(\zp)$, given by the commutative triangle
\[
\begin{tikzcd}[column sep=1.5em]
 & N' \arrow{dd}{\zp} \\
 \Ac \arrow[swap]{dr}{\zf''}\arrow{ur}{\zf'}\\
 & N''\;,
\end{tikzcd}
\]
is a morphism $\zP$ in $\Ac\downarrow {\tt DG\Dc A}$ between $G(N')=\zf'$ and $G(N'')=\zf''\,.$ Eventually, the maps $F$ and $G$ are actually functors, and, as verified above, the composites $FG$ and $GF$ coincide with the corresponding identity functors on objects. It is easily seen that the same holds on morphisms. This is clear for $FG$, whereas for $GF$ one has to notice that $\zf'(a)=a\triangleleft'1_{N'}=\psi'(a)\star'1_{N'}=\psi'(a)$ and analogously for $\zf''$.

\end{proof}

\subsection{Differential graded $\Dc$-algebras and modules over them as algebras over a monad}

For the purpose of further studying the category $\tt Mod_{DG\Dc M}(\Ac)$ of modules in $\tt DG\Dc M$ over an algebra $\Ac\in\tt DG\Dc A$, as well as the category $\tt DG\Dc A$ itself, we rely on results of \cite{SS98}. To be able to apply the latter, we must view the two preceding categories as categories of algebras over monads. For details on adjunctions, monads and algebras over them, we refer to Appendix \ref{B:monads}. Information on locally presentable categories can be found in Appendix \ref{LPC}.

\subsubsection{Differential graded $\Dc$-algebras}\label{DGDAs}

Consider the adjunction $$\Sc:\tt DG\Dc M\rightleftarrows DG\Dc A:\op{F}\;,$$ where $\Sc$ is the graded symmetric tensor product functor and $\op{F}$ the forgetful functor \cite{DPP}. This Hom-set adjunction can be viewed as a unit-counit adjunction $\langle \Sc,\op{F},\zh,\ze\rangle$. It implements a monad $\langle T,\zm,\zh\rangle=\langle \op{F}\Sc,\op{F}\ze\Sc,\zh\rangle$ in $\tt DG\Dc M$.

\begin{prop}\label{L:Talg}
The category $\tt DG\Dc A$ of differential graded $\Dc$-algebras and the Eilenberg-Moore category ${\tt DG\Dc M}\,^T$ of $T$-algebras in $\tt DG\Dc M$ are equivalent.
\end{prop}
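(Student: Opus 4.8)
The plan is to recognize the proposition as an instance of Beck's monadicity theorem for the free--forgetful adjunction $\Sc\dashv\op{F}$. Recall that, as observed in the introduction, $\tt DG\Dc A=\tt CMon(\tt DG\Dc M)$, so the assertion is exactly that the category of commutative monoids in $\tt DG\Dc M$ is the Eilenberg--Moore category of the monad $T=\op{F}\Sc$. The unit--counit adjunction $\langle\Sc,\op{F},\zh,\ze\rangle$ furnishes the canonical comparison functor $K:\tt DG\Dc A\to{\tt DG\Dc M}\,^T$, sending an algebra $\Ac$ to the $T$-algebra $(\op{F}\Ac,\op{F}\ze_\Ac)$ and an algebra morphism to its underlying $\tt DG\Dc M$-morphism; the aim is to show that $K$ is an equivalence (in fact an isomorphism) of categories.

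By Beck's theorem it suffices to verify that $\op{F}:\tt DG\Dc A\to\tt DG\Dc M$ has a left adjoint, is conservative, and creates coequalizers of $\op{F}$-split pairs. The first point is immediate, the left adjoint being $\Sc$. For conservativity, if $f$ is a morphism of differential graded $\Dc$-algebras whose underlying $\tt DG\Dc M$-morphism admits an inverse, that inverse is again multiplicative and unit-preserving, hence itself an algebra morphism; thus $\op{F}$ reflects isomorphisms.

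The real content, and the step I expect to be the main obstacle, is the creation of coequalizers of $\op{F}$-split pairs. Given a parallel pair in $\tt DG\Dc A$ whose image under $\op{F}$ admits a split coequalizer $\op{F}\Bc\to Q$ in $\tt DG\Dc M$, I would transport the algebra structure of $\Bc$ along this quotient. The decisive input is that $-\0_\bullet-$ preserves the coequalizers in play: split coequalizers are absolute, hence preserved by every functor, so that $Q\0_\bullet Q$ is again the coequalizer of the induced pair on tensor squares (and, more generally, the closed symmetric monoidal structure on $\tt DG\Dc M$ established above makes $-\0_\bullet M$ a left adjoint, hence colimit-preserving in each variable). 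Consequently the multiplication $\zm_\Bc$ and unit $\zh_\Bc$ descend uniquely along the epimorphism $\op{F}\Bc\to Q$, and associativity, unitality and graded commutativity are inherited. This equips $Q$ with the unique {\small DG$\Dc$A}-structure making $\op{F}\Bc\to Q$ an algebra map and exhibits it as the coequalizer in $\tt DG\Dc A$, so that $\op{F}$ creates it. The subtlety to monitor here is the $\Dc$-specific one stressed throughout the paper -- that $\0_\bullet$ is taken over $\Oc$ rather than over $\Dc$, while the descended operations must remain $\Dc$-linear -- but this causes no difficulty precisely because every map in sight is already a $\tt DG\Dc M$-morphism for the established monoidal product $\0_\bullet$. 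Granting the three hypotheses, Beck's theorem yields that $K$ is an equivalence; tracing the construction, $K$ and the inverse assignment -- which reads off $\zm$ from the composite $M\0_\Oc M\to\op{F}\Sc M\xrightarrow{h}M$ and $\zh$ from the unit component $\Oc\to\op{F}\Sc M\xrightarrow{h}M$ of a $T$-action $h$ -- are mutually inverse on objects and on morphisms, giving in fact an isomorphism of categories.
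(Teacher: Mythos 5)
Your proof is correct, but it takes a genuinely different route from the one in the paper. The paper does not verify Beck's conditions by hand: after remarking that the crude monadicity theorem would do the job, it instead invokes a general result of Porst, according to which the forgetful functor $\op{For}:{\tt CMon(C)}\to{\tt C}$ is monadic whenever $\tt C$ is a locally presentable symmetric monoidal category in which each $c\0\bullet$ preserves directed colimits; the hypotheses are then discharged by noting that ${\tt DG\Dc M}$ is locally presentable and closed, so that $c\0\bullet$ is a left adjoint and preserves all colimits. Your argument replaces this citation with the classical hands-on verification of the precise monadicity theorem: conservativity of $\op{F}$ is immediate, and for $\op{F}$-split pairs the splitting data tensor up, so that $e\0 e$ is again a split (hence absolute) coequalizer of the pair $(f\0 f,\,g\0 g)$, while $e\circ\zm_{\Bc}$ coequalizes exactly that pair because $f$ and $g$ are multiplicative; the multiplication and unit therefore descend, and associativity, unitality and graded commutativity follow since $e\0 e\0 e$ is epic. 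The one step worth spelling out in your write-up is precisely this point: the descent of $\zm_\Bc$ does not follow merely from $e\0 e$ being an epimorphism, but from the fact that the tensor square of a split fork is again a split fork, so that $Q\0_\bullet Q$ is the coequalizer of the correct pair. What your approach buys is self-containedness (no appeal to local presentability) and the slightly stronger conclusion that the comparison functor is an isomorphism of categories, since a commutative monoid structure on $M$ is literally the same data as a $T$-algebra structure; what the paper's approach buys is brevity, because local presentability of ${\tt DG\Dc M}$ is needed anyway in Section \ref{SS:combinatoriality} for combinatoriality and is thus available at no extra cost. Your remark about the $\Dc$-specific subtlety -- that $\0_\bullet$ is taken over $\Oc$ -- is well placed and correctly dismissed: all the descent happens inside ${\tt DG\Dc M}$ for the established monoidal product, so $\Dc$-linearity is automatic.
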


\proof The statement is true if the forgetful functor $\op{F}$ is monadic. This can be checked using the crude monadicity theorem, see Appendix \ref{OMA}. However, there is a quicker proof. It is known \cite{Porst} that, if $\tt C$ is a symmetric monoidal, locally presentable category, and such that, for any $c\in\tt C$, the functor $c\0 \bullet$ respects directed colimits, then the forgetful functor $\op{For}: {\tt CMon(C)\to C}$ is monadic. Note first that the category $\tt C = DG\Dc M$ {\it is} locally presentable. The result can be proven directly, but follows also from \cite{Rosicky}. Moreover, this category is Abelian closed symmetric monoidal. In view of closedness, the functor $c\0\bullet$ is a left adjoint functor and respects therefore all colimits. Hence, the functor $\op{F}:\tt DG\Dc A\to DG\Dc M$ is monadic.
\endproof

\subsubsection{Modules over a differential graded $\Dc$-algebra}

Let $\Ac\in\tt DG\Dc A$ and consider the adjunction $$\zS:\tt DG\Dc M\rightleftarrows {\tt Mod_{DG\Dc M}}(\Ac):\op{\zF}\;,$$ where $\zS$ is the functor $\Ac\0_\bullet -$ and $\zF$ the forgetful functor. Checking that these functors really define an adjunction, so that, for any $M\in\tt DG\Dc M$, the product $\zS(M)=\Ac\0_\bullet M$ is the free $\Ac$-module in $\tt DG\Dc M$, is straightforward. When interpreting this Hom-set adjunction as a unit-counit adjunction $\langle \zS,\zF,\zh,\ze\rangle$, we get an induced monad $\label{e:Umon} \langle U,\zm,\zh\rangle =\langle \zF\,\zS,\zF\, \ze\, \zS,\zh \rangle$ in $\tt DG\Dc M$.

\begin{prop}\label{L:Ualg}
The category $\tt Mod_{DG\Dc M}(\Ac)$ of $\Ac$-modules in $\tt DG\Dc M$ and the Eilenberg-Moore category ${\tt DG\Dc M}^{\,U}$ of $U$-algebras in $\tt DG\Dc M$ are equivalent.
\end{prop}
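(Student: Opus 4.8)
The plan is to mimic the proof of Proposition \ref{L:Talg}: for \emph{any} adjunction the comparison functor $K:{\tt Mod_{DG\Dc M}}(\Ac)\to {\tt DG\Dc M}^{\,U}$ associated with $\langle \zS,\zF,\zh,\ze\rangle$ exists, and it is an equivalence of categories precisely when the forgetful functor $\zF$ is monadic. Hence it suffices to prove that $\zF$ is monadic. Note, however, that I cannot simply reuse the result of \cite{Porst} invoked in Proposition \ref{L:Talg}, since that statement concerns the forgetful functor ${\tt CMon(C)\to C}$ out of commutative monoids, not the forgetful functor out of a category of modules. Instead I would check the hypotheses of the crude monadicity theorem (Appendix \ref{OMA}) directly for $\zF$.

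There are three things to verify. First, $\zF$ admits a left adjoint, namely the free-module functor $\zS=\Ac\0_\bullet-$; this was already observed just before the statement. Second, $\zF$ is conservative: if $f$ is a morphism of $\Ac$-modules whose underlying map $\zF f$ is an isomorphism in $\tt DG\Dc M$, then applying the inverse $g=(\zF f)^{-1}$ to the action-compatibility square of $f$ shows that $g$ is itself $\Ac$-linear, so $f$ is already an isomorphism in ${\tt Mod_{DG\Dc M}}(\Ac)$. Third, I would show that $\zF$ creates the coequalizers of $\zF$-split pairs required by the theorem. The crucial input is that, by the closedness of $\tt DG\Dc M$ (cf. (\ref{IntHomDG})), the endofunctor $\Ac\0_\bullet-$ is a left adjoint and therefore commutes with all colimits; combined with the cocompleteness of ${\tt Mod_{DG\Dc M}}(\Ac)$ from Proposition \ref{mod2}, this lets one form the coequalizer in $\tt DG\Dc M$ and then transport the $\Ac$-action to it uniquely through the universal property, so that $\zF$ creates (hence preserves and reflects) these coequalizers.

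With the three facts in hand, the crude monadicity theorem gives that $\zF$ is monadic, i.e. that $K$ is an equivalence, which is the claim. The step I expect to require the most care is the creation of coequalizers: one must check that the coequalizer computed in $\tt DG\Dc M$ genuinely carries a unique compatible $\Ac$-module structure realizing it as the coequalizer in ${\tt Mod_{DG\Dc M}}(\Ac)$, and this hinges on the fact that $\Ac\0_\bullet-$ commutes with that colimit. This is exactly where the specific features of $\tt DG\Dc M$ — tensor product and internal Hom taken over $\Oc$ rather than over $\Dc$ — could in principle cause trouble, but they have already been accommodated in the closedness statements established earlier, so beyond the bookkeeping no genuinely new difficulty arises.
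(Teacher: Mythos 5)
Your proposal is correct, but it follows the route that the paper only mentions in passing rather than the one it actually carries out. The paper defers the proof of Proposition \ref{L:Ualg} and ultimately obtains the equivalence ${\tt Mod_{DG\Dc M}}(\Ac)\simeq{\tt DG\Dc M}^{\,U}$ as a by-product of the Schwede--Shipley machinery: after verifying that $\tt DG\Dc M$ is a cofibrantly generated monoidal model category satisfying the monoid axiom with all objects small, it invokes the proofs of \cite[Theorems 4.1(1) and 4.1(2)]{SS98}, which identify modules over a monoid with algebras over the associated monad, and only remarks parenthetically that ``the result can also be obtained via the crude monadicity theorem.'' You make that parenthetical remark precise: left adjoint ($\zS=\Ac\0_\bullet-$), conservativity of $\zF$ (transporting the inverse through the action square), and existence plus preservation of the relevant coequalizers, the last point resting on the fact that $\Ac\0_\bullet-$ is a left adjoint by closedness of $\tt DG\Dc M$ and hence commutes with coequalizers, so the coequalizer formed in $\tt DG\Dc M$ acquires a unique compatible $\Ac$-action. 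Your argument is purely categorical and needs none of the homotopical input (monoid axiom, smallness) that the paper's citation route requires, so it is in fact the more economical proof of the bare equivalence; the paper's route earns its keep because the SS98 hypotheses must be checked anyway for the model-structure transfer in Theorem \ref{P:AmodCGMMC}. One small correction: the crude monadicity theorem as stated in Appendix \ref{OMA} (Theorem \ref{MonaTheo}) asks for coequalizers of \emph{reflexive} pairs, not $\zF$-split pairs (the latter belong to Beck's precise monadicity theorem); since your argument shows $\zF$ preserves \emph{all} coequalizers, either hypothesis is satisfied and nothing breaks, but the terminology should be aligned with the theorem you cite.
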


We address the proof of this proposition later on. In view of the requirements of a Homotopical Algebra Context, we will show that the model structure of $\tt DG\Dc M$ can be lifted to $\tt Mod_{DG\Dc M}(\Ac)$:

\begin{thm}\label{P:AmodCGMMC} The category $\tt Mod_{DG\Dc M}(\Ac)$, $\Ac\in\tt DG\Dc A$, is a cofibrantly generated symmetric monoidal model category that satisfies the monoid axiom (see below). For its monoidal structure we refer to Proposition \ref{mod2}. The weak equivalences and fibrations are those $\Ac$-module morphisms $\psi$ whose underlying $\tt DG\Dc M$-morphisms $\zF(\psi)$ are weak equivalences or fibrations, respectively. The cofibrations a defined as the morphisms that have the {\small LLP} with respect to the trivial fibrations. The set of generating cofibrations (resp., generating trivial cofibrations) is made of the image $\zS(I)=\left\{\Id_\Ac\otimes_\bullet\, \iota_n: \iota_n\in I \right\}$ (resp., $\zS(J)=\left\{\Id_\Ac\otimes_\bullet\,\zeta_n:\zeta_n\in J \right\}$) of the set $I$ of generating cofibrations (resp., the set $J$ of generating trivial cofibrations) of $\tt DG\Dc M$.
\end{thm}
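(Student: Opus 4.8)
The plan is to obtain the model structure by \emph{transfer} (lifting) from ${\tt DG\Dc M}$ along the free--forgetful adjunction $\zS\colon{\tt DG\Dc M}\rightleftarrows{\tt Mod_{DG\Dc M}}(\Ac)\colon\zF$, and then to deduce the monoidal compatibility from Schwede--Shipley \cite{SS98}; the one genuinely new ingredient is the \emph{monoid axiom} for the base category ${\tt DG\Dc M}$. Concretely, one declares an $\Ac$-module morphism $\psi$ to be a weak equivalence (resp.\ a fibration) precisely when $\zF(\psi)$ is one in ${\tt DG\Dc M}$, and takes the cofibrations to be the maps with the {\small LLP} against the trivial fibrations. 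The transfer theorem for cofibrantly generated model categories then asserts that this is a cofibrantly generated model structure with generating (trivial) cofibrations $\zS(I)$ (resp.\ $\zS(J)$), provided that (i) ${\tt Mod_{DG\Dc M}}(\Ac)$ is bicomplete and $\zS(I),\zS(J)$ permit the small object argument, and (ii) $\zF$ sends every relative $\zS(J)$-cell complex to a weak equivalence.

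Condition (i) is immediate from Proposition~\ref{L:Ualg}: the category ${\tt Mod_{DG\Dc M}}(\Ac)$ is the Eilenberg--Moore category of the monad $U=\zF\zS$, which is finitary because $\Ac\otimes_\bullet-$ (and hence $U$) preserves filtered colimits; since ${\tt DG\Dc M}$ is locally presentable, so is ${\tt Mod_{DG\Dc M}}(\Ac)$, whence it is bicomplete and all of its objects are small. For condition (ii) I would use that, $\Ac\otimes_\bullet-$ being a left adjoint, the forgetful functor $\zF$ preserves (indeed creates) all colimits, the colimits of $\Ac$-modules being computed underlying in ${\tt DG\Dc M}$. Consequently $\zF$ applied to a relative $\zS(J)$-cell complex is, in ${\tt DG\Dc M}$, a transfinite composite of pushouts of the maps $\zF\zS(\zeta_n)=\Id_\Ac\otimes_\bullet\zeta_n=\Ac\otimes_\bullet\zeta_n$, each of which has the form (object)$\,\otimes_\bullet\,$(generating trivial cofibration). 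Thus (ii) reduces \emph{exactly} to the monoid axiom for ${\tt DG\Dc M}$.

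This monoid axiom I would prove by hand: every map in the cellular closure of $\{\,X\otimes_\bullet\zeta_n:X\in{\tt DG\Dc M},\ \zeta_n\in J\,\}$ is a weak equivalence. Since $\zeta_n\colon0\to D^n$ and the $n$-disc $D^n$ is contractible through its $\Oc$-linear (indeed $\Dc$-linear) desuspension homotopy, the complex $D^n\otimes_\bullet X$ is contractible for \emph{every} $X$; note that it is contractibility, not flatness, that is used here, so that the failure of $\Oc$ to be $\Dc$-flat plays no role. A pushout of the map $0\to D^n\otimes_\bullet X$ (whose source is $0$) along any $0\to Z$ is the summand inclusion $Z\hookrightarrow Z\oplus(D^n\otimes_\bullet X)$, an injective quasi-isomorphism with acyclic cokernel; and a transfinite composite of such maps is a filtered colimit, which is exact in the Grothendieck category ${\tt\Dc M}$ and hence remains a quasi-isomorphism. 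By the standard reduction --- every trivial cofibration is a retract of a $J$-cell complex, and $\otimes_\bullet$ commutes with the colimits building cell complexes --- this establishes the monoid axiom for ${\tt DG\Dc M}$, and with it condition (ii); the model structure therefore lifts.

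It remains to upgrade the transferred structure to a symmetric monoidal model structure satisfying the monoid axiom. Here I would invoke \cite[Thm.~4.1]{SS98}: ${\tt DG\Dc M}$ is a cofibrantly generated symmetric monoidal model category (Section~\ref{S:momo}) satisfying the monoid axiom (just shown), and $\Ac$ is a \emph{commutative} monoid, so that $({\tt Mod_{DG\Dc M}}(\Ac),\otimes_\Ac,\Ac,\underline{\te{Hom}}_\Ac)$ --- with the monoidal structure of Proposition~\ref{mod2} --- is a cofibrantly generated symmetric monoidal model category satisfying the monoid axiom, whose generating (trivial) cofibrations are $\zS(I)$ (resp.\ $\zS(J)$). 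In particular the pushout--product axiom {\small MMC1} and the unit axiom {\small MMC2} hold; {\small MMC2} is again not automatic, since the unit $\Ac=\zS(\Oc)$ need not be cofibrant, but it is supplied by the cited theorem (an $\Ac$-module analogue of Lemma~\ref{WeqCof}). The main obstacle throughout is the monoid axiom for ${\tt DG\Dc M}$ of the third paragraph: the delicate points are that it must hold for \emph{arbitrary} objects $X$, not merely cofibrant ones, and that it must persist under transfinite composition --- both tamed by the contractibility of the discs $D^n$ and the exactness of filtered colimits.
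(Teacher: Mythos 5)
Your proposal is correct and follows the same overall strategy as the paper: everything is reduced to \cite[Theorem 4.1(2)]{SS98} (whose proof is itself the transfer argument you spell out, via Lemma 2.3 of that paper), so that the only substantive inputs are the monoid axiom for ${\tt DG\Dc M}$ and the smallness of its objects. Where you genuinely diverge is in the verification of the monoid axiom. The paper shows $H_\ast(D^n\otimes_\bullet M)=0$ via the K\"unneth formula, which requires knowing that $D^n$ and $d(D^n)$ are termwise flat $\Oc$-modules -- i.e.\ it leans on Proposition \ref{Projectivity} ($\Oc$-projectivity of $\Dc$). You instead observe that $D^n$ carries a $\Dc$-linear contracting homotopy, so that $D^n\otimes_\bullet X$ is contractible for \emph{every} $X$ simply because chain homotopies are preserved by $-\otimes_\bullet X$; this bypasses flatness entirely and makes transparent why the axiom holds for arbitrary, non-cofibrant $X$. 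Your treatment of the transfinite composition (homology commutes with the filtered colimits and direct sums building the cell complex) likewise replaces the paper's appeal to finite generation of ${\tt DG\Dc M}$ and \cite[Corollary 7.4.2]{Ho99}; both routes are valid. One small caution: the unit axiom {\small MMC2} for ${\tt Mod}(\Ac)$ is not literally contained in \cite[Theorem 4.1]{SS98}, whose notion of monoidal model category consists of the pushout--product axiom only, and the unit $\Ac\simeq\Ac\otimes_\bullet\Oc$ is indeed not cofibrant in general. Your parenthetical remark that one needs an $\Ac$-module analogue of Lemma \ref{WeqCof} is the right fix -- this is exactly Proposition \ref{P:flatness}, proved later in the paper -- but attributing it to the cited theorem is slightly inaccurate; the paper itself glosses over this point in the same way.
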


The proof will turn out to be a consequence of \cite[Theorem 4.1(2)]{SS98}. For convenience, we recall that this theorem states that, if $\tt C$, here $\tt DG\Dc M$, is a cofibrantly generated symmetric monoidal model category, which satisfies the monoid axiom and whose objects are small relative to the entire category, then, for any $\Ac\in\tt CMon(DG\Dc M)=DG\Dc A$, the category ${\tt DG\Dc M}^{\,U}$ is a cofibrantly generated symmetric monoidal model category satisfying the monoid axiom.  The monoidal and model structures are defined as detailed in Theorem \ref{P:AmodCGMMC}. The model part of this result \cite[Proofs of Theorems 4.1(1) and 4.1(2)]{SS98} is a direct consequence of \cite[Lemma 2.3]{SS98}. This allows in fact to conclude also that the generating sets of cofibrations and trivial cofibrations are the sets $\zS(I)$ and $\zS(J)$ described in \ref{P:AmodCGMMC}.\medskip

Since any chain complex of $\Dc$-modules is small relative to all chain maps, any object in $\tt DG\Dc M$ is small relative to all $\tt DG\Dc M$-morphisms. Hence, to finish the proof of Theorem \ref{P:AmodCGMMC}, it suffices to check that $\tt DG\Dc M$ satisfies the monoid axiom:

\begin{defi} A monoidal model category $\tt C$ satisfies the {\em monoid axiom} \cite[Definition 3.3]{SS98}, if any $\op{TrivCof}\0\,\tt C$-cell $(\,$a concise definition of cells can be found, for instance, in \cite[Appendix 6]{DPP}$\,)$, i.e., any cell with respect to the class of the tensor products $\zf\0\Id_C:C'\0 C\to C''\0 C$ of a trivial cofibration $\zf:C'\to C''$ and the identity of an object $C\in\tt C$, is a weak equivalence.\end{defi}

If $\tt C$ is cofibrantly generated and closed symmetric monoidal, the monoid axiom holds, if any $J\0\tt C$-cell, where $J$ is a set of generating trivial cofibrations of $\tt C$, is a weak equivalence \cite[Lemma 3.5]{SS98}.\medskip

Hence, to prove that $\tt DG\Dc M$ satisfies the monoid axiom, it suffices to show that a $J\0_\bullet\tt DG\Dc M$-cell, i.e., a transfinite composition of pushouts of morphisms in $J\0_\bullet \tt DG\Dc M$, is a weak equivalence. Since $\tt DG\Dc M$ is a finitely generated model category \cite{DPP} and the domains and codomains of its generating cofibrations $I$ are finite, i.e., $n$-small ($n\in\N$), relative to the whole category \cite[Lemma 2.3.2]{Ho99}, weak equivalences are closed under transfinite compositions \cite[Corollary 7.4.2]{Ho99}. Therefore, it is enough to make sure that a pushout of a morphism $\zeta_n\0_\bullet\Id_M\in J\0_\bullet\tt DG\Dc M$ ($n\ge 1,\,M\in\tt DG\Dc M$) is a weak equivalence. Here $\zeta_n:0\to D^n$ and $$ D^n: \cdots \to 0\to 0\to \stackrel{(n)}{\Dc} \to \stackrel{(n-1)}{\Dc}\to 0\to \cdots\to \stackrel{(0)}{0}\;.$$ Using standard arguments that have already been detailed above, one easily checks that the pushout of $\zeta_n\0_\bullet\Id_M:0\to D^n\0_\bullet M$ along a morphism $\zf:0\to N$ is given by

\begin{figure}[h]
\begin{center}
\begin{tikzpicture}
  \matrix (m) [matrix of math nodes, row sep=3em, column sep=3em]
    {  0 & N  \\
       D^n\0_\bullet M & (D^n\0_\bullet M)\oplus N \\ };
 \path[->]
 (m-1-2) edge  node[right] {$\scriptstyle{i_2}$} (m-2-2);
 \path[->]
 (m-1-1) edge  node[above] {$\scriptstyle{\zf}$} (m-1-2);
  \path[->]
 (m-1-1) edge  node[left] {$\scriptstyle{\zeta_n\0_\bullet\Id_M}$} (m-2-1);
  \path[->]
 (m-2-1) edge  node[above] {$\scriptstyle{i_1}$} (m-2-2);
\end{tikzpicture}
\end{center}
\end{figure}

\noindent Applying K\"unneth's Theorem to the complexes $D^n$ and $M$ -- noticing that both, $D^n$ and $d(D^n)$ (which vanishes, except in degree $n-1$, where it coincides with $\Dc$), are termwise flat $\mathcal{O}$-modules (see Proposition \ref{Projectivity}; for a direct proof, see \cite{DPP})  --\nolinebreak, we get, for any $m$, a short exact sequence

\begin{align}\nonumber
0\rightarrow \bigoplus_{p+q=m}H_p(D^n)\otimes H_q(M)&\rightarrow H_m(D^n\0_\bullet M)\rightarrow\\ \nonumber
&\bigoplus_{p+q=m-1} \op{Tor}_1(H_p(D^n), H_q(M))\rightarrow 0\;.
\end{align}

\noindent Since $D^n$ is acyclic, the central term of this exact sequence vanishes, as the first and the third do. Eventually, the pushout $i_2$ of $\zeta_n\0_\bullet\Id_M$ is a weak equivalence, since $$H(i_2):H(N)\to H(D^n\0_\bullet M)\oplus H(N)\simeq H(N)$$ is obviously an isomorphism.\medskip

The category $\tt DG\Dc M$ thus satisfies all the conditions of \cite[Theorem 4.1(2)]{SS98}. It now follows from \cite[Proofs of Theorems 4.1(1) and 4.1(2)]{SS98} that the category $\tt Mod_{DG\Dc M}(\Ac)$ is equivalent to the category ${\tt DG\Dc M}^{\,U}$ (the result can also be obtained via the crude monadicity theorem). This completes the proofs of Proposition \ref{L:Ualg} and Theorem \ref{P:AmodCGMMC}.

\begin{rem} For any $\Ac\in\tt CMon(DG\Dc M)= DG\Dc A$, an $\Ac$-algebra $A\in\tt Alg_{DG\Dc M}(\Ac)$ is defined in \cite{SS98} as a monoid $A\in\tt Mon(Mod_{DG\Dc M}(\Ac))$. Theorem 4.1(3) in \cite{SS98} states that $\tt Alg_{DG\Dc M}(\Ac)$ is a cofibrantly generated model category. When choosing $\Ac=\Oc$, we find that $\tt Alg_{DG\Dc M}(\Oc)= Mon(Mod_{DG\Dc M}(\Oc))=\tt Mon(DG\Dc M)$ is cofibrantly generated. However, the theorem does not treat the case of commutative $\Oc$-algebras, of commutative monoids in $\tt DG\Dc M$, or, still, of differential graded $\Dc$-algebras $\tt DG\Dc A$. The fact that $\tt DG\Dc A$ is a cofibrantly generated model category has been proven independently in \cite{DPP}. \end{rem}

\begin{rem} In the sequel, we write $\tt Mod(\Ac)$ instead of $\tt Mod_{DG\Dc M}(\Ac)$, whenever no confusion arises.\end{rem}

\subsection{Cofibrant objects in $\tt Mod(\Ac)$}

In this last Subsection, we describe cofibrant $\Ac$-modules. We need a similar lemma as \cite[Lemma 1]{DPP2} that we used to characterize cofibrations in $\tt DG\Dc A$.

\begin{lem}\label{LemAmod} Let $(\Ac,d_\Ac)\in\tt DG\Dc A$, $(T,d_T)\in \tt Mod(\Ac)$, let $(g_j)_{j\in J}$ be a family of symbols of degree $n_j\in \NN$, and let $V=\bigoplus_{j\in J}\Dc\cdot g_j$ be the free non-negatively graded $\Dc$-module with homogeneous basis $(g_j)_{j\in J}$.\smallskip

(i) To endow the graded $\mathcal{D}$-module $T\oplus \Ac\otimes_\bullet V$ -- equipped with the naturel $\Ac$-module structure induced by the $\Ac$-actions of $T$ and $\Ac\0_\bullet V$ -- with a differential $d$ that makes it an $\Ac$-module, it suffices to define \be\label{CondAmodDiff}d (g_j)\in T_{n_j-1}\cap d_T^{-1}\{0\}\;,\ee to extend $d$ as $\Dc$-linear map to $V$, and to finally define $d$ on $T\oplus \Ac\otimes_\bullet V$, for any $t\in T_p, a\in \Ac_{k}, v\in V_{p-k}\,$, by
   \be\label{DefAmodDiff}d(t\oplus a\otimes v)=d_T(t)+d_\Ac(a)\otimes v+(-1)^k a\triangleleft d(v)\;,\ee
where $\triangleleft$ is the $\Ac$-action on $T$. The inclusion
    $$(T,d_T)\hookrightarrow (T\oplus \Ac\otimes_\bullet V,d)$$
is a morphism of $\Ac$-modules. Moreover, the differential (\ref{DefAmodDiff}) is the unique differential that restricts to $d_T$ on $T$, maps $V$ into $T$ and provides an $\Ac$-module structure on the graded $\Dc$-module $T\oplus \Ac\otimes_\bullet V$ equipped with its natural $\Ac$-action.\smallskip

(ii) If $(B,d_B)\in{\tt Mod(\Ac)}$ and $p\in\op{Hom}_\Ac(T,B)$, it suffices -- to define a morphism $q\in{\op{Hom}_\Ac}(T\oplus \Ac\otimes_\bullet V,B)$ (where the $\Ac$-module $(T\oplus \Ac\otimes_\bullet V,d)$ is constructed as described in (i)) -- to define
 \be\label{CondAmodMorph}q(g_j)\in B_{n_j}\cap d_B^{-1}\{p\,d(g_j)\}\;,\ee to extend $q$ as $\Dc$-linear map to $V$, and to eventually define $q$ on $T\oplus \Ac\0_\bullet V$ by
    \be\label{DefAmodMorph}q(t\oplus a\otimes v)=p(t)+a\triangleleft q(v)\;,\ee
where $\triangleleft$ is the $\Ac$-action on $B$. Moreover, \ref{DefAmodMorph} is the unique $\Ac$-module morphism $(T\oplus \Ac\otimes_\bullet V,d)\to (B,d_B)$ that restricts to $p$ on $T$.
\end{lem}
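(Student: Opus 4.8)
The plan is to treat this as the module-theoretic analogue of the construction of free extensions of differential graded $\Dc$-algebras in \cite[Lemma 1]{DPP2}, so the strategy mirrors the one used there: first produce the candidate differential (resp.\ morphism) by the stated formula, then verify in turn that it is well defined on the $\Oc$-balanced tensor product, that it is $\Dc$-linear, that it has the correct degree, and finally that it satisfies the remaining structural identities ($d^2=0$ and the Leibniz rule for (i); the chain-map and $\Ac$-linearity conditions for (ii)). Uniqueness will in each case be forced by the module axioms once the values on the generators $g_j$ are fixed.

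For part (i), I would start by checking that the right-hand side of (\ref{DefAmodDiff}) descends to $\Ac\0_\bullet V$: since $\Oc\hookrightarrow\Ac$ sits in degree $0$ with $d_\Ac|_\Oc=0$, and since both the $\Ac$-action $\triangleleft$ on $T$ and the map $d|_V$ are $\Oc$-linear, the two terms $d_\Ac(a)\0 v$ and $(-1)^k\,a\triangleleft d(v)$ respect the relation $(af)\0 v = a\0(fv)$. The degree count is immediate. The genuinely delicate point is $\Dc$-linearity: because the tensor product in $\tt DG\Dc M$ is taken over $\Oc$, the partial action $m\mapsto a\triangleleft m$ is only $\Oc$-linear, so $\Dc$-linearity of $d$ is not formal. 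As in \cite[Proposition 6]{DPP}, I would use that $X$ is smooth affine, so that $\Dc$ is generated by $\Oc$ and vector fields, and reduce $\Dc$-linearity to commutation with an arbitrary vector field $\theta$; the identity $\theta(a\triangleleft d(v))=\theta(a)\triangleleft d(v)+a\triangleleft\theta(d(v))$ (the action being a $\Dc$-morphism), together with the $\Dc$-linearity of $d_\Ac$ and of $d|_V$, then yields $d\theta=\theta d$ on $\Ac\0_\bullet V$. Granting this, $d^2=0$ is checked piecewise: it holds on $T$ because $d_T^2=0$, on the generators because $d(g_j)$ is a $d_T$-cycle by (\ref{CondAmodDiff}), and on $\Ac\0_\bullet V$ because the two contributions to $d^2(a\0 v)$, namely $(-1)^{k-1}\,d_\Ac(a)\triangleleft d(v)$ coming from $d(d_\Ac(a)\0 v)$ and $(-1)^{k}\,d_\Ac(a)\triangleleft d(v)$ coming from $(-1)^k d_T(a\triangleleft d(v))$ (using $d_T(d(v))=0$), cancel. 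The Leibniz identity $d(b\triangleleft x)=d_\Ac(b)\triangleleft x+(-1)^{|b|}b\triangleleft d(x)$ follows from the Leibniz rule in $\Ac$ and associativity of the action. For uniqueness, any module differential $d'$ with $d'|_T=d_T$ and the prescribed values on $V\cong 1_\Ac\0 V$ must satisfy $d'(a\0 v)=d'(a\triangleleft(1_\Ac\0 v))=d_\Ac(a)\0 v+(-1)^k a\triangleleft d'(v)$ by Leibniz, hence agrees with (\ref{DefAmodDiff}).

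Part (ii) runs along the same lines with $q$ in place of $d$. After checking that (\ref{DefAmodMorph}) is $\Oc$-balanced, degree-preserving, $\Dc$-linear (again via commutation with vector fields and the fact that $\triangleleft:\Ac\0_\bullet B\to B$ is a $\Dc$-morphism) and $\Ac$-linear (from associativity of the action), the one remaining point is that $q$ is a chain map. Here the hypothesis (\ref{CondAmodMorph}), i.e.\ $d_B(q(g_j))=p\,d(g_j)$, is exactly what is needed: extending $\Dc$-linearly it gives $d_B(q(v))=p(d(v))$ for every $v\in V$, and then on $\Ac\0_\bullet V$ one computes $q(d(a\0 v))=d_\Ac(a)\triangleleft q(v)+(-1)^k a\triangleleft p(d(v))$, which equals $d_B(a\triangleleft q(v))=d_B(q(a\0 v))$ by the Leibniz rule in $B$. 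Uniqueness is immediate: by $\Ac$-linearity $q(a\0 v)=a\triangleleft q(1_\Ac\0 v)$, so $q$ is determined by $q|_T=p$ together with its prescribed values on the generators.

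The main obstacle is thus not the (otherwise routine, if sign-sensitive) verification of $d^2=0$ and of the chain-map identity, but the repeated need to establish $\Dc$-linearity of maps defined in terms of the partial actions $a\triangleleft(-)$, which are merely $\Oc$-linear; this is where the smoothness of $X$ and the $\Dc$-linearity of the structure maps $d_\Ac$, $d_T$, $d_B$ and $\triangleleft$ are essential, exactly as in \cite[Proposition 6]{DPP}.
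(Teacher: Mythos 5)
Your proposal is correct and follows essentially the same route as the paper's proof: define $d$ (resp.\ $q$) by the stated formula, verify well-definedness and $\Dc$-linearity, obtain $d^2=0$ from the cancellation of $(-1)^{k-1}d_\Ac(a)\triangleleft d(v)$ against $(-1)^k d_T(a\triangleleft d(v))=(-1)^k d_\Ac(a)\triangleleft d(v)$, check compatibility of the $\Ac$-action with the differentials, and force uniqueness via the Leibniz rule applied to $a\triangleleft(1_\Ac\0 v)$. The paper dismisses well-definedness, degree and $\Dc$-linearity as ``straightforward'' and part (ii) as ``similar''; your expansion of these points (in particular deducing $\Dc$-linearity from the fact that the full maps are composites of $\Dc$-linear structure maps, even though $a\triangleleft(-)$ alone is only $\Oc$-linear) is accurate and consistent with the paper's argument.
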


Note that Condition (\ref{CondAmodDiff}) corresponds to the characterizing lowering condition in relative Sullivan $\Dc$-algebras \cite{DPP}.

\begin{proof}
(i) It is straightforward to see that $d$ is a well-defined, degree $-1$ and $\Dc$-linear map on $T\oplus\Ac\0_\bullet V$. It squares to zero, since the $\Ac$-action $-\triangleleft-=\zn$ on $T$ commutes with the differentials on $\Ac\0_\bullet T$ and $T$, $$d_T(a\triangleleft d(v))=d_T(\zn(a\0 d(v)))=\zn(d_\Ac(a)\0 d(v)+(-1)^ka\0 d_T(d(v)))=d_\Ac(a)\triangleleft d(v)\;,$$ and thus compensates the other non-vanishing term in $d^2(t+a\0 v)$. Hence, $T\oplus\Ac\0_\bullet V\in\tt DG\Dc M$. Its naturel $\Ac$-action -- also denoted by $-\triangleleft-$ -- endows it with an $\Ac$-module structure, if it commutes with the differentials $d_\Ac\0\Id+\Id\0\, d$ of $\Ac\0_\bullet(T\oplus\Ac\0_\bullet V)$ and $d$ of $T\oplus\Ac\0_\bullet V$. This condition is easily checked, so that $T\oplus\Ac\0_\bullet V$ is actually an $\Ac$-module for the differential $d$ and the $\Ac$-action $$a'\triangleleft(t+a''\0 v)=a'\triangleleft t +(a'a'')\0 v\;.$$ It is clear that $T$ is an $\Ac$-submodule of $T\oplus\Ac\0_\bullet V$. Concerning uniqueness, let $\p$ be any differential that has the required properties. Then, $$\p(t+a\0 v)=d_T(t)+\p(a\triangleleft (1_\Ac\0 v))=$$ $$d_T(t)+d_\Ac(a)\triangleleft(1_\Ac\0 v)+(-1)^ka\triangleleft\p(v)=d_T(t)+d_\Ac(a)\0 v+(-1)^ka\triangleleft\p(v)\;,$$ with $$\p(g_j)\in T_{n_j-1}\cap d_T^{-1}\{0\}\;.$$

(ii) Similar proof.
\end{proof}

We are now prepared to study cofibrant $\Ac$-modules. This description will be needed later on. Let us recall that the cofibrations in $\tt DG\Dc A$, or, equivalently, in ${\tt DG\Dc M}^T$ -- where $T$ is the composite of the free differential graded $\Dc$-algebra functor $\Sc$ (symmetric tensor product functor) and the forgetful functor -- , are the retracts of relative Sullivan $\Dc$-algebras $(\Bc\0\Sc V,d)$ \cite{DPP}. We will prove that, similarly, cofibrant objects in $\tt Mod(\Ac)$, or, equivalently, in ${\tt DG\Dc M}^U$ -- where $U$ is the composite of the free $\Ac$-module functor $\Ac\0_\bullet - $ and the forgetful functor -- , are retracts of `Sullivan $\Ac$-modules'. If one remembers that the binary coproduct in $\tt Mod(\Ac)$ (resp., $\tt DG\Dc A$) is the direct sum (resp., tensor product), and that the initial object in $\tt Mod(\Ac)$ (resp., $\tt DG\Dc A$) is $(\{0\},0)$ (resp., $(\Oc,0)$), the definition of relative Sullivan $\Ac$-modules is completely analogous to that of relative Sullivan $\Dc$-algebras \cite{DPP}:

\begin{defi} Let $\Ac\in\tt DG\Dc A$. A {\em relative Sullivan $\Ac$-module} ({\small RS$\Ac$M}) is a $\tt Mod(\Ac)$-morphism $$(B,d_B)\to (B\oplus\Ac\0_\bullet V,d)\;$$that sends $b\in B$ to $b+0\in B\oplus\Ac\0_\bullet V$. Here $V$ is a free non-negatively graded $\Dc$-module, which admits a homogeneous basis $(m_\za)_{\za<\zl}$ that is indexed by a well-ordered set, or, equivalently, by an ordinal $\zl$, and is such that \be\label{Lowering}d m_\za\in B\oplus \Ac\0_\bullet V_{<\za}\;,\ee for all $\za<\zl$. In the last requirement, we set $V_{<\za}:= \bigoplus_{\zb<\za}\Dc\cdot m_\zb$. We translate (\ref{Lowering}) by saying that the differential $d$ is {\em lowering}. A {\small RS$\Ac$M} over $(B,d_B)=(\{0\},0)$ is called a {\em Sullivan $\Ac$-module} ({\small S$\Ac$M}) $(\Ac\0_\bullet V, d)$.\end{defi}

In principle the free $\Ac$-module functor is applied to $(M,d_M)\in \tt DG\Dc M$ and leads to $(\Ac\0_\bullet M,d_{\Ac\0_\bullet M})\in\tt Mod(\Ac)$. In the preceding definition, this functor is taken on $V\in\tt G\Dc M$ and provides a graded $\Dc$-module with an $\Ac$-action. The latter is endowed with a lowering differential $d$ such that $(\Ac\0_\bullet V,d)\in\tt Mod(\Ac)$.

\begin{thm}\label{L:cofobjsinAMod} Let $\Ac\in\tt DG\Dc A$. Any cofibrant object in $\tt Mod(\Ac)$ is a retract of a Sullivan $\Ac$-module and vice versa.
\end{thm}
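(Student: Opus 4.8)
The plan is to characterize cofibrant objects in $\tt Mod(\Ac)$ as retracts of Sullivan $\Ac$-modules by exploiting the cofibrant generation of the model structure established in Theorem \ref{P:AmodCGMMC}. The key structural fact is that, in a cofibrantly generated model category, every cofibrant object is a retract of an $I$-cell complex, where $I=\zS(I_{\tt DG\Dc M})=\{\Id_\Ac\0_\bullet\zi_0,\,\Id_\Ac\0_\bullet\zi_n\}$ is the set of generating cofibrations of $\tt Mod(\Ac)$ described there. Since retracts of Sullivan $\Ac$-modules are closed under retraction by construction, and since one checks easily (using Lemma \ref{LemAmod} to build the requisite lowering differential and the fact that a Sullivan $\Ac$-module is built from the initial object $(\{0\},0)$ by a transfinite tower of the prescribed generating cofibrations) that every Sullivan $\Ac$-module is cofibrant, the real content is the forward direction: every cofibrant $\Ac$-module is a retract of a Sullivan $\Ac$-module.

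First I would make explicit what an $\zS(I)$-cell attachment looks like. A generating cofibration has the form $\Id_\Ac\0_\bullet\zi_n:\Ac\0_\bullet S^{n-1}\to\Ac\0_\bullet D^n$ (together with $\Id_\Ac\0_\bullet\zi_0:0\to\Ac\0_\bullet S^0$), and pushing such a map out along an attaching morphism $\Ac\0_\bullet S^{n-1}\to T$ into an already-constructed $\Ac$-module $T$ adjoins, on the level of underlying graded $\Dc$-modules, a free summand $\Ac\0_\bullet V$ where $V=\Dc\cdot g$ is a single free generator of degree $n$. The resulting pushout is exactly the object $T\oplus\Ac\0_\bullet V$ equipped with the differential $d$ dictated by the attaching map, and by Lemma \ref{LemAmod}(i) this differential is precisely the unique $\Ac$-module differential determined by the lowering condition $d(g)\in T_{n-1}\cap d_T^{-1}\{0\}$. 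Thus each single cell attachment is an elementary extension by one generator satisfying the lowering condition (\ref{Lowering}), and the map $T\to T\oplus\Ac\0_\bullet V$ is a relative Sullivan $\Ac$-module.

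Next I would assemble a transfinite composite of such attachments and verify that the colimit is again a relative Sullivan $\Ac$-module. The union of the one-generator extensions produces a free graded $\Dc$-module $V=\bigoplus_\za\Dc\cdot m_\za$ indexed by an ordinal, and the lowering differential of each stage lands in the part of $V$ built from strictly earlier generators, so the global differential satisfies $dm_\za\in B\oplus\Ac\0_\bullet V_{<\za}$ as required by the definition. Because the initial object of $\tt Mod(\Ac)$ is $(\{0\},0)$, an $\zS(I)$-cell complex built from the initial object is exactly a Sullivan $\Ac$-module. Invoking the standard retract argument for cofibrantly generated model categories (every cofibrant object is a retract of a cell complex), any cofibrant $\Ac$-module is a retract of such a Sullivan $\Ac$-module, which gives the forward implication.

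The main obstacle I anticipate is the bookkeeping in identifying the pushout of a generating cofibration with the elementary Lemma \ref{LemAmod} extension, together with stability under the transfinite colimit. Concretely, one must check that the cokernel of $\Id_\Ac\0_\bullet\zi_n$ is the free $\Ac$-module on a single degree-$n$ generator (so that attaching a cell genuinely adds one basis element $m_\za$), that the induced differential after pushout agrees with (\ref{DefAmodDiff}), and that passing to the transfinite colimit preserves both the freeness of $V$ and the lowering property --- this last point requires that the generators be well-ordered compatibly with the filtration by stages, which the cell construction supplies automatically. The converse direction (a Sullivan $\Ac$-module is cofibrant) is then essentially a repackaging: one shows the defining inclusion has the left lifting property against trivial fibrations by transfinite induction along the generators, lifting one $m_\za$ at a time via Lemma \ref{LemAmod}(ii), exactly as in the relative Sullivan $\Dc$-algebra case of \cite{DPP}.
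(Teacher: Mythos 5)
Your proposal is correct and follows essentially the same route as the paper: identify each pushout of a generating cofibration $\Id_\Ac\otimes_\bullet\iota_n$ with the one-generator extension $T\to T\oplus\Ac\otimes_\bullet S^n$ of Lemma \ref{LemAmod}, observe that transfinite composition yields a relative Sullivan $\Ac$-module with well-ordered generators and lowering differential, and conclude via the small object argument and the standard retract argument. The only difference is that you also sketch the converse (Sullivan $\Ac$-modules are cofibrant), which the paper explicitly declines to prove since it is not needed later.
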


Since we do not use the fact that any retract of a Sullivan module is cofibrant, we will not prove this statement.

\proof By Proposition \ref{P:AmodCGMMC}, the model category $\tt Mod(\Ac)$ is cofibrantly generated. Cofibrations are therefore retracts of morphisms in $\zS(I)$-cell \cite[Proposition 2.1.18 (b)]{Ho99}, i.e., they are retracts of transfinite compositions of pushouts of generating cofibrations $\zS(I)$.\medskip

We start studying the pushout of a generating cofibration
$$
\zS(\iota_n):=\Id_\Ac\otimes_\bullet \iota_n: \Ac\otimes_\bullet S^{n-1}\rightarrow \Ac\otimes_\bullet D^{n}
$$
along a $\tt Mod(\Ac)$-morphism $f:\Ac\otimes_\bullet S^{n-1}\rightarrow B$, where $n>0$ (the case $n=0$ is simpler). This pushout is given by the square

\begin{equation}\label{e:pushchar}
\begin{tikzcd}[column sep=small]
(\Ac\otimes_\bullet S^{n-1}, d_{\Ac\otimes_\bullet S^{n-1}})\arrow{r}{f} \arrow[swap]{d}{\zS(\iota_n)} & (B, d_B) \arrow{d}{h}\\
(\Ac\otimes_\bullet D^{n}, d_{\Ac\otimes_\bullet D^{n}})\arrow{r}{g} & (B\oplus \Ac\otimes_\bullet S^n, d)\quad ,\\
\end{tikzcd}
\end{equation}
where the differential $d$ and the $\tt Mod(\Ac)$-maps $g$ and $h$ are defined as follows.\medskip

Observe that $(\Ac\0_\bullet S^{n-1},d_{\Ac\0_\bullet S^{n-1}})$ meets the requirements of point (i) of Lemma \ref{LemAmod}. Indeed, if $1_{n-1}$ is the basis of $S^{n-1}$, the differential $\zd$ constructed in Lemma \ref{LemAmod} satisfies $\zd(1_{n-1})=0$ and $$\zd(a\0 (\zD\cdot 1_{n-1}))=d_\Ac(a)\0 (\zD\cdot 1_{n-1})=d_{\Ac\0_\bullet S^{n-1}}(a\0 (\zD\cdot 1_{n-1}))\;,$$ where $\zD\cdot 1_{n-1}$ denotes the action of $\zD\in\Dc$ on $1_{n-1}$. It now follows from point (ii) of Lemma \ref{LemAmod} that $f$ is completely determined by its value $f(1_{n-1})\in B_{n-1}\cap d_B^{-1}\{0\}$ (we identify $\zD\cdot 1_{n-1}$ with $1_\Ac\0 (\zD\cdot 1_{n-1}))$.\medskip

Using again Lemma \ref{LemAmod}.(i), we define $d$ as the unique differential on $B\oplus \Ac\otimes_\bullet S^n$ satisfying
$$
d|_B=d_B \qquad \text{and}\qquad d(1_{n})=f(1_{n-1})\;.
$$

The morphism $h$ is defined as the inclusion of $(B,d_B)$ into $(B\oplus \Ac\otimes_\bullet S^n,d)$.\smallskip

As for $g$, we define it as $h\circ f$ on $\Ac\0_\bullet S^{n-1}$. Then we set $T=\Ac\otimes_\bullet S^{n-1}$ and $V=S^n$, and observe that the differential $\p$ on $$T\oplus \Ac\0_\bullet V = \Ac\0_\bullet D^n\;,$$ given by $$\p(1_n)=1_{n-1}\in T_{n-1}\cap\, d_T^{-1}\{0\}\;,$$ coincides with the differential $d_{\Ac\0_\bullet D^n}$. In view of this observation, the $\tt Mod(\Ac)$-map $g$ can be defined as the extension of $h\circ f$ to $\Ac\0_\bullet D^n$. Part (ii) of Lemma \ref{LemAmod} allows to see that $g$ is now fully defined by $$g(1_n)=1_n\in (B\oplus \Ac\0_\bullet S^n)_n\cap d^{-1}\{h(f(\p (1_n)))\}\;.\vspace{1mm}$$

Next, we prove that the diagram \eqref{e:pushchar} commutes and is universal among all such diagrams.\smallskip

A concerns commutativity, note that any element of $\Ac\0_\bullet S^{n-1}$ is a finite sum of elements $a\0 (D\cdot 1_{n-1})=a\triangleleft(D\cdot 1_{n-1})$, so that the two $\tt Mod(\Ac)$-maps $h\circ f$ and $g\circ \zS(\iota_n)$ coincide if they do on $1_{n-1}$ -- what is a direct consequence of the preceding definitions.\smallskip

To prove universality, consider any $\Ac$-module $(C,d_C)$, together with two $\tt Mod(\Ac)$-morphisms $$p: (\Ac\otimes_\bullet D^{n}, d_{\Ac\otimes_\bullet D^{n}})\rightarrow (C,d_C)$$ and $q: (B,d_B)\rightarrow (C,d_C)$, such that $q\circ f = p\circ \zS(\iota_n)\,,$ and show that there is a unique $\tt Mod(\Ac)$-map $$u:(B\oplus \Ac\otimes_\bullet S^n, d)\rightarrow (C,d_C)$$ that renders commutative the `two triangles'.\smallskip

When extending $q$ by means of Lemma \ref{LemAmod} to a $\tt Mod(\Ac)$-map on $B\oplus \Ac\0_\bullet S^n$, we just have to define $u(1_n)\in C_n\cap d_C^{-1}(p(1_{n-1}))$. Observe that, if $u$ exists, we have necessarily $u|_B=u\circ h=q$ and $u(1_n)=u(g(1_n))=p(1_n)$. It is easily seen that the latter choice satisfies the preceding conditions and that $u$ is unique. Notice that, obviously, $u\circ h=q$ and that $u\circ g=p$, since this equality holds on $1_{n}$ and $1_{n-1}$: for $1_{n-1}$, we have
\begin{align*}
u(g(1_{n-1}))=&\,u(f(1_{n-1}))=\,q(f(1_{n-1}))=\,p(\zS(\iota_n)(1_{n-1}))=\,p(1_{n-1})\;.
\end{align*}
Finally (\ref{e:pushchar}) is indeed the pushout diagram of $\zS(\iota_n)$ along $f$.\medskip

The maps in $\zS(I)$-cell are the transfinite compositions of such pushout diagrams. A transfinite composition of pushouts is the colimit of a colimit respecting functor $X:\zl\to \tt Mod(\Ac)$ (where $\zl$ is an ordinal), such that the maps $X_\zb\to X_{\zb+1}$ ($\zb+1<\zl$) are pushouts of generating cofibrations $\zS(I)$. Let therefore $$X_0\to X_1\to \ldots \to X_\zb\to X_{\zb+1}\to \ldots$$ be such a functor. The successive maps are pushouts in the category $\tt Mod(\Ac)$: $$X_0=B, X_1=B\oplus\Ac\0_\bullet S^{n(1)},\ldots,X_\zb=B\oplus\Ac\0_\bullet\bigoplus_{\za\le\zb}S^{n(\za)},\ldots,$$ $$X_\zw=B\oplus\Ac\0_\bullet\bigoplus_{\za<\zw}S^{n(\za)}, X_{\zw+1}=X_\zw\oplus\Ac\0_\bullet S^{n(\zw+1)},\ldots\;,$$ where any $n(\za)\in\N$. It follows that the transfinite composition or colimit is $$\op{colim}_{\za<\zl}X_\za=B\oplus\Ac\0_\bullet\bigoplus_{\za<\zl,\za\in\mathbf{O}_s}S^{n(\za)}\;,$$ where $\mathbf{O}_s$ denotes the successor ordinals, or, better, the composition is the $\tt Mod(\Ac)$-map \be\label{TransCompPushGen}(B,d_B)\to \left(B\oplus\Ac\0_\bullet\bigoplus_{\za<\zl,\za\in\mathbf{O}_s}S^{n(\za)},d\right)\;,\ee where $d$ is defined by $d\,|_{X_\za}=d_{X_\za}$ ($\za\in\mathbf{O}_s$) and $d_{X_\za}$ is defined inductively by $d_{X_\za}|_{X_{\za-1}}=d_{X_{\za-1}}$ and by $d_{X_\za}(1_{n(\za)})=f_\za(1_{n(\za)-1})\,,$ with self-explaining notation (if $\za=\zw+1$, then $d_{X_{\za-1}}=d_{X_\zw}$ is defined by its restrictions to the $X_\zb$, $\zb<\zw$). Eventually, any $\zS(I)$-cell is a relative Sullivan $\Ac$-module and any cofibration is a retract of a relative Sullivan $\Ac$-module.\medskip

Let now $C$ be a cofibrant $\Ac$-module and let $QC$ be its cofibrant replacement, given by the small object argument \cite[Theorem 2.1.14]{Ho99}: the $\tt Mod(\Ac)$-map $z':0\rightarrow QC$ is in $\zS(I)$-cell $\subset\op{Cof}$, hence it is a relative Sullivan $\Ac$-module. Moreover, in the commutative diagram

\begin{equation}\label{e:trick}
\begin{tikzcd}[column sep=large]
0\arrow{r}{z'} \arrow{d}{z} & QC \arrow{d}{z''}\\
C\arrow[swap]{r}{\Id_C} \arrow[dashed]{ur}{\ell} & C\\
\end{tikzcd},
\end{equation}
the right-down arrow $z''$ is in $\op{TrivFib}$ and the left-down $z$ in $\op{Cof}=\op{LLP(TrivFib)}$, so that the dashed $\tt Mod(\Ac)$-arrow $\ell$ does exist. The diagram encodes the information $z''\circ\ell=\op{id_C}$, i.e., the information that the cofibrant $C\in\tt Mod(\Ac)$ is a retract of the Sullivan $\Ac$-module $QC$.
\endproof

\section{Homotopical Algebraic Context for $\tt DG\Dc M$}

A {\em Homotopical Algebraic Context} ({\small HAC}) is a context that satisfies several minimal requirements for the development of Homotopical Algebra within this setting. Such a context is a triplet $(\tt C, C_0,A_0)$ made of a symmetric monoidal model category $\tt C$ and two full subcategories $\tt C_0\subset\tt C$ and $A_0\subset\tt CMon(C)$, which satisfy assumptions that will be recalled and commented below.\medskip

We will show that the triplet \[
(\tt DG\Dc M, DG\Dc M, DG\Dc A)
\] is a {\small HAC}. Therefore, some preparation is needed.

\subsection{Transfinite filtrations and graduations}

We start with a useful lemma. If $\Ac\in\tt DG\Dc A$ and $M\in\tt G\Dc M$, the tensor product $\Ac\0_\bullet M$ can be an object in $\tt Mod(\Ac)$, in essentially two ways. If $M$ comes with its own differential, i.e., if $M\in\tt DG\Dc M$, the natural choice for the differential on $\Ac\0_\bullet M$ is the standard differential on a tensor product of complexes. If, on the contrary, $M$ has no own differential, the tensor product can be a Sullivan $\Ac$-module. We will tacitely use the following

\begin{lem}\label{SimplTensA} Let $\Ac\in\tt DG\Dc A$, $B\in\tt Mod(\Ac)$, and $M\in\tt G\Dc M$ such that $\Ac\0_\bullet M\in \tt Mod(\Ac)$. Then, the $\Ac$-module $B\0_\Ac(\Ac\0_\bullet M)$ and the $\Ac$-module $B\0_\bullet M$ -- with canonical $\Ac$-action and transferred differential coming from the differential of $B\0_\Ac(\Ac\0_\bullet M)$ -- are isomorphic as $\Ac$-modules. If $M\in\tt DG\Dc M$ and the differential of $\Ac\0_\bullet M$ is the standard differential, the transferred differential on $B\0_\bullet M$ is also the standard differential, so that the isomorphism of $\Ac$-modules holds with standard differentials. \end{lem}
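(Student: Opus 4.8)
The plan is to reduce the statement to the standard fact that tensoring over $\Ac$ with a \emph{free} $\Ac$-module absorbs that module, and then to track the differential by hand. The key observation is that, whichever of the two differentials $\Ac\0_\bullet M$ carries, its underlying graded $\Ac$-module is always the free $\Ac$-module on $M$, with $\Ac$ acting on the left tensor factor by $a\triangleleft(a'\0 m)=(aa')\0 m$. So at the level of graded $\Ac$-modules the assertion is the base-change isomorphism $B\0_\Ac(\Ac\0_\bullet M)\cong (B\0_\Ac\Ac)\0_\bullet M\cong B\0_\bullet M$, which I will realize by explicit mutually inverse maps built from the coequalizer description of $\0_\Ac$ recalled in the proof of Proposition~\ref{CMonUnder}.

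First I would define $\bar\beta:B\0_\Ac(\Ac\0_\bullet M)\to B\0_\bullet M$ as the map induced by
$$\beta:B\0_\bullet(\Ac\0_\bullet M)\to B\0_\bullet M,\qquad \beta\big(b\0(a\0 m)\big)=(-1)^{|a||b|}(a\triangleleft b)\0 m,$$
i.e. the right $\Ac$-action on $B$ (obtained from $\triangleleft$ via the braiding) tensored with $\Id_M$; and its candidate inverse $\alpha:B\0_\bullet M\to B\0_\Ac(\Ac\0_\bullet M)$, $\alpha(b\0 m)=\overline{b\0(1_\Ac\0 m)}$. The substantive check is that $\beta$ annihilates the submodule defining the coequalizer, namely the one generated by $b\0((aa')\0 m)-(-1)^{|a||b|}(a\triangleleft b)\0(a'\0 m)$: applying $\beta$ to the two terms and using the module axiom $a'\triangleleft(a\triangleleft b)=(a'a)\triangleleft b$ together with the graded commutativity $a'a=(-1)^{|a||a'|}aa'$ of $\Ac$, the two resulting signs agree, so $\bar\beta$ is well defined. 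That $\bar\beta$ and $\alpha$ are mutually inverse is then immediate from $1_\Ac\triangleleft b=b$, $a\cdot 1_\Ac=a$, and the coequalizer relation with $a'=1_\Ac$; both maps are $\Ac$-linear for the canonical actions and are $\Dc$-linear because they are assembled from the structure morphisms ($\triangleleft$, unit insertion, braiding), all of which are $\tt DG\Dc M$-morphisms, and because $\0_\bullet$ and $\0_\Ac$ are computed inside $\tt DG\Dc M$.

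With the graded $\Ac$-module isomorphism $\bar\beta$ in hand, the first assertion follows by transport of structure: one \emph{defines} the differential on $B\0_\bullet M$ to be $\delta:=\bar\beta\circ d\circ\alpha$, where $d$ is the differential of $B\0_\Ac(\Ac\0_\bullet M)$; since $\bar\beta,\alpha$ are inverse $\Ac$-linear graded isomorphisms, $(B\0_\bullet M,\delta)$ is automatically an $\Ac$-module in $\tt DG\Dc M$ and $\bar\beta$ is an isomorphism in $\tt Mod(\Ac)$. For the second assertion, suppose $M\in\tt DG\Dc M$ and $\Ac\0_\bullet M$ carries the standard tensor differential. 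Then $d$ is induced degreewise from the standard differential of the complex $B\0_\bullet(\Ac\0_\bullet M)$, and a short computation gives, using $d_\Ac(1_\Ac)=0$ and $|1_\Ac|=0$,
\begin{align*}
\delta(b\0 m)&=\bar\beta\, d\big(b\0(1_\Ac\0 m)\big)\\
&=\bar\beta\big(d_B(b)\0(1_\Ac\0 m)+(-1)^{|b|}b\0(1_\Ac\0 d_M m)\big)\\
&=d_B(b)\0 m+(-1)^{|b|}b\0 d_M m,
\end{align*}
which is exactly the standard differential on $B\0_\bullet M$.

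The only genuine obstacle is the sign cancellation making $\bar\beta$ well defined: it is precisely the graded commutativity of $\Ac$ (not mere associativity) that forces the two signs to coincide, in keeping with the earlier observation that commutativity and the unit are essential in this circle of results. Everything else is either formal (transport of structure for the first assertion) or the displayed computation (for the second), the latter hinging only on the unit $1_\Ac$ being a degree-$0$ cycle.
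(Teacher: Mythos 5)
Your proof is correct and follows essentially the same route as the paper's: you construct the identical map $\imath(b\0(a\0 m))=(-1)^{|a||b|}(a\triangleleft b)\0 m$ out of the coequalizer, and transfer the differential along it exactly as in Equations (\ref{i}) and (\ref{diff}). The only difference is that you spell out the well-definedness check (the sign cancellation via graded commutativity) and the computation for the standard-differential case, which the paper declares "straightforward" and "obvious."
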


\begin{proof} We consider first the general case. The $\Ac$-action on the graded $\Dc$-module $B\0_\bullet M$ is the natural action implemented by the action of $B$. Set now \be\label{i}\imath: B\0_\Ac(\Ac\0_\bullet M)\ni b\0(a\0 m)\mapsto (-1)^{|a||b|}a\triangleleft (b\0 m)\in B\0_\bullet M\;.\ee It can straightforwardly be checked that $i$ is a well-defined isomorphism of $\Ac$-modules in $\tt G\Dc M$. Let now $d_B$ (resp., $d$) be the differential of $B$ (resp., $\Ac\0_\bullet M$). The differential \be\label{diff}\p:=\imath\circ(d_B\0\Id_{\0_\bullet}+\Id_B\0\, d)\circ \imath^{-1}\;\ee makes $B\0_\bullet M$ an $\Ac$-module and $\imath$ an isomorphism of $\Ac$-modules. The particular case mentioned in the lemma is obvious.\end{proof}

We also need in the following some results related to $\zl$-filtrations, where $\zl\in\mathbf{O}$ is an ordinal. Recall first that, if $\tt C$ is a cocomplete category, the colimit is a functor $\op{colim}:{\tt Fun}(\zl,{\tt C})\to \tt C$, whose source is the category ${\tt Fun}(\zl,{\tt C})$ of diagrams of type $\zl$ in $\tt C$.

\begin{Defi}
Let $\zl\in\mathbf{O}$ be an ordinal and let $\tt C\in\tt Cat$ be a category, which is closed under small colimits. An object $C\in\tt C$ is \emph{$\zl$-filtered}, if it is the colimit $C=\op{colim}_{\zb<\zl}\op{F}_\zb C$ of a $\zl$-sequence of $\tt C$-monomorphisms, i.e., of a colimit respecting functor $\op{F}C:\zl\to \tt C$, such that all maps $\op{F}_{\zb,\zb+1}C:\op{F}_\zb C\rightarrow \op{F}_{\zb+1}C$, $\zb+1<\zl$, are $\tt C$-monomorphisms:
\[
\op{F}_0 C\rightarrow \op{F}_1C\rightarrow\ldots\rightarrow \op{F}_{\zg}C\rightarrow\ldots\;
\]
The family $(\op{F}_\zb C)_{\zb\in\zl}$ is called a \emph{$\zl$-filtration} of $C$.\smallskip

Let $(\op{F}_\zb C)_{\zb\in \lambda}$ and $(\op{F}_\zb D)_{\zb\in \lambda}$ be $\lambda$-filtrations of $C\in\tt C$ and $D\in\tt C$, respectively. A $\tt C$-morphism $f:C\to D$ is \emph{compatible with the $\lambda$-filtrations}, if it is the colimit $f=\op{\colim}_{\zb<\zl}\zvf_\zb$ of a natural transformation $\zvf:\op{F}C\to \op{F}D$:
 \begin{equation}\nonumber
 \begin{tikzcd}
\op{F}_0C \ar[r] \ar{d}{\zvf_{0}} &\op{F}_1C \ar{r} \ar{d}{\zvf_{1}} &\ldots \ar[r]&\op{F}_{\zg}C \ar{r} \ar{d}{\zvf_{\zg}} & \ldots \\
\op{F}_0D \ar[r]    &\op{F}_1D \ar{r} &\ldots \ar[r]&\op{F}_{\zg}D \ar{r}   & \ldots
\end{tikzcd}
\end{equation}
\end{Defi}

In the first two lemmas below, we replace our standard category $\tt DG\Dc M$ by the more general category ${\tt DG}\text{\it\small R}\,{\tt M}$, where $R$ is, as usual, an arbitrary unital ring. For the model structure on ${\tt DG}\text{\it\small R}\,{\tt M}$, we refer to \cite{DPP}, as well as to references therein.

\begin{lem}\label{L:limitord}
Consider a nonzero ordinal $\zl\in\mathbf{O}\setminus\{0\}$, two $\zl$-filtered chain complexes $C,D\in{\tt DG}\text{\it\small R}\,{\tt M}$, with $\zl$-filtrations $(\op{F}_\zb C)_{\zb\in \lambda}$ and $(\op{F}_\zb D)_{\zb\in \lambda}$, and let $f:C\rightarrow D$ be a ${\tt DG}\text{\it\small R}\,{\tt M}$-morphism, which is compatible with the filtrations and whose corresponding natural transformation is denoted by $\zvf:\op{F}C\to \op{F}D$. If, for any $\zb<\zl$, the map $\zvf_\zb:\op{F}_\zb C\rightarrow \op{F}_\zb D$ is a weak equivalence in ${\tt DG}\text{\it\small R}\,{\tt M}$, then the same holds for $f$.
\end{lem}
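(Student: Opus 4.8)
The plan is to reduce the statement to the classical fact that homology commutes with filtered colimits of chain complexes over a ring. First I would recall that the weak equivalences of ${\tt DG}\text{\it\small R}\,{\tt M}$ are precisely the quasi-isomorphisms, so that proving the claim amounts to showing that the induced map $H_n(f):H_n(C)\to H_n(D)$ is an isomorphism of $R$-modules for every $n\in\N$.

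The decisive observation is that the diagrams $\op{F}C:\zl\to{\tt DG}\text{\it\small R}\,{\tt M}$ and $\op{F}D:\zl\to{\tt DG}\text{\it\small R}\,{\tt M}$ are indexed by the ordinal $\zl$, which --- being a nonempty totally ordered set (here the hypothesis $\zl\neq 0$ guarantees nonemptiness) --- is a directed, hence filtered, category. Consequently $C=\colim_{\zb<\zl}\op{F}_\zb C$ and $D=\colim_{\zb<\zl}\op{F}_\zb D$ are filtered colimits of chain complexes. Since filtered colimits are exact in ${\tt Mod}(R)$, homology commutes with them: for every $n$ one has natural isomorphisms $H_n(C)\simeq\colim_{\zb<\zl}H_n(\op{F}_\zb C)$ and $H_n(D)\simeq\colim_{\zb<\zl}H_n(\op{F}_\zb D)$ (this is the content of \cite{Wei}). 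I would take care to record that these isomorphisms are natural in the underlying diagram, so that under them the map $H_n(f)$ is identified with $\colim_{\zb<\zl}H_n(\zvf_\zb)$.

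With this identification the conclusion is immediate. By hypothesis each $\zvf_\zb:\op{F}_\zb C\to\op{F}_\zb D$ is a weak equivalence, so each $H_n(\zvf_\zb)$ is an isomorphism; a natural transformation of $\zl$-diagrams that is an isomorphism at every stage induces an isomorphism on colimits, whence $H_n(f)=\colim_{\zb<\zl}H_n(\zvf_\zb)$ is an isomorphism. As $n$ was arbitrary, $f$ is a quasi-isomorphism, i.e., a weak equivalence in ${\tt DG}\text{\it\small R}\,{\tt M}$.

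The only genuinely delicate point is the interchange of homology with the transfinite colimit, and I expect the main effort to be bookkeeping rather than conceptual. Concretely, one must verify that the colimit over $\zl$ is honestly filtered so that exactness of filtered colimits applies uniformly, including across limit ordinals --- which is exactly where the colimit-respecting property of $\op{F}C$ and $\op{F}D$ is used --- and one must assemble the two colimit identifications together with the collection $(\zvf_\zb)$ into a single commutative ladder whose colimit is $f$, so that the final identification $H_n(f)=\colim_{\zb<\zl}H_n(\zvf_\zb)$ is legitimate. It is worth emphasizing that the monomorphism condition built into the definition of a $\zl$-filtration plays no role in this argument: exactness of filtered colimits, and hence the commutation of homology with the colimit, holds regardless of whether the transition maps $\op{F}_{\zb,\zb+1}$ are monic.
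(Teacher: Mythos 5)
Your argument is correct, but it takes a genuinely different route from the paper's. You reduce everything to the classical fact that homology commutes with filtered colimits in ${\tt Mod}(R)$ (exactness of filtered colimits, AB5), observing that the ordinal $\zl$ is a directed poset and that $f=\colim_{\zb<\zl}\zvf_\zb$ by the compatibility hypothesis; a filtered colimit of isomorphisms $H_n(\zvf_\zb)$ is then an isomorphism. The paper instead argues by hand: it first shows, by a transfinite induction using the colimit-respecting property of $\op{F}C$, that \emph{all} transition maps $\op{F}_{\zb\zg}C$ (not just the successor ones) are injective, so that the colimit can be realized degreewise as the increasing union $C_n=\bigcup_{\zb<\zl}\op{F}_{\zb,n}C$; it then checks injectivity and surjectivity of $H_n(f)$ by chasing a cycle or bounding chain into some finite stage $\op{F}_\zb C$ and invoking the hypothesis that $H(\zvf_\zb)$ is an isomorphism there. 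Your version is shorter and more conceptual, and you are right that the monomorphism condition on the filtration is not needed for it; the paper's version is more self-contained and produces the explicit union description of the colimit, which it reuses elsewhere (e.g.\ in locating representatives $c_n=c_{\zb,n}$ in later arguments). One small correction: the filteredness of the index category $\zl$ has nothing to do with the colimit-respecting property of $\op{F}C$ at limit ordinals --- any nonempty totally ordered set is directed --- so that hypothesis is simply not used in your argument, rather than being used ``across limit ordinals'' as you suggest.
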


\proof In the following, {\it we assume temporarily that $F_{\zb\zg}C$ and $F_{\zb\zg}D$ are injective, for all $\zb<\zg<\zl$}. Note first that, any ${\tt DG}\text{\it\small R}\,{\tt M}$-map $g:C'\to C''$ induces a ${\tt DG}\text{\it\small R}\,{\tt M}$-isomorphism $C'/\ker g\simeq \op{im}g$. Hence, for any $\zb\le\zg<\zl$, we get $$\op{F}_\zb C\simeq \op{im}(\op{F}_{\zb\zg}C)\subset\op{F}_\zg C\;.$$ This identification implies that $\op{F}_{\zb\zg}C$ is the canonical injection $$\op{F}_{\zb\zg}C:\op{F}_\zb C\hookrightarrow\op{F}_\zb C\subset\op{F}_\zg C$$ and that the differentials $\p_\zb,\p_\zg$ of $\op{F}_\zb C,\op{F}_\zg C$ satisfy $$\p_\zg|_{\op{F}_\zb C}=\p_\zb\;.$$ The same observation holds for $D$. For the natural transformation $\zvf$, we get $$\zvf_\zg|_{\op{F}_\zb C}=\zvf_\zb\;.$$

Recall now that a colimit in ${\tt DG}\text{\it\small R}\,{\tt M}$, say $C=\op{colim}_{\zb<\zl}\op{F}_\zb C$, is constructed degree-wise in ${\tt Mod}(R)$: $$C_n:=\coprod_{\zb<\zl}\op{F}_{\zb,n}C/\sim\;,$$ where $c_{\zb,n}\sim c_{\zg,n}$, if there is $\zd\ge \op{sup}(\zb,\zg),\zd<\zl$ such that $\op{F}_{\zb\zd}C\,(c_{\zb,n})=\op{F}_{\zg\zd}C\,(c_{\zg,n})$, i.e., $c_{\zb,n}=c_{\zg,n}.$ It follows that \be\label{LimCup}C_n=\bigcup_{\zb<\zl}\op{F}_{\zb,n} C\;.\ee The set $C_n$ can be made an object $C_n\in{\tt Mod}(R)$ in a way such that the maps $\zp_{\zb,n}:\op{F}_{\zb,n}C\to C_n$ become ${\tt Mod}(R)$-morphisms and $C_n$ becomes the colimit in ${\tt Mod}(R)$ of $\op{F}_nC:\zl\to {\tt Mod}(R)$. Due to (\ref{LimCup}), the maps $\zp_{\zb,n}$ are the canonical injections $$\zp_{\zb,n}:\op{F}_{\zb,n}C\hookrightarrow C_n\;.$$ Universality of the colimit allows to conclude that there is a ${\tt Mod}(R)$-morphism $\p_{n}:C_n\to C_{n-1}$ such that \be\label{LimDiff}\p_n|_{\op{F}_{\zb,n}C}=\p_{\zb}\;.\ee We thus get a complex $(C_\bullet,\p_\bullet)\in {\tt DG}\text{\it\small R}\,{\tt M}$, together with ${\tt DG}\text{\it\small R}\,{\tt M}$-morphisms \be\label{injcolim}\zp_{\zb,\bullet}:\op{F}_{{\zb,\bullet}}C\hookrightarrow C_\bullet\;,\ee and this complex is the colimit $C$ in ${\tt DG}\text{\it\small R}\,{\tt M}$ of $\op{F}C$ \cite{DPP}.\medskip

We have still to remove the temporary assumption. Note first the following:

\begin{rem} If $\zl\in\mathbf{O}$ and $X\in\op{Fun}(\zl,{\tt C})$ is a $\zl$-diagram in a cocomplete category $\tt C$, then, for any $\zb<\zg\le\zl$, the map $X_{\zb\ast}$, which assigns to any $\zg\setminus\zb$-object $\za$ the $\tt C$-morphism $X_{\zb\za}:X_\zb\to X_\za$, is a natural transformation between the constant functor $X_\zb$ and the functor $X$, both restricted to $\zg\setminus\zb$. The application of the colimit functor $\op{colim}:{\tt Fun}(\zg\setminus\zb,{\tt C})\to{\tt C}$ to this natural transformation leads to \be\label{Interpret1}\op{colim}_{\zb\le\za<\zg}X_{\zb\za}:X_\zb\to\op{colim}_{\zb\le\za<\zg}X_\za\;.\ee Further, a functor $G:{\tt D}'\to{\tt D}''$ preserves colimits, if, in case $(C,\psi)$ is the colimit of a diagram $F$ in ${\tt D}'$, then $(G(C),G(\psi))$ is the colimit of the diagram $G\,F$ in ${\tt D}''$. Hence, the functor $X:\zl\to \tt C$ preserves colimits means that, for a limit ordinal $\zg=\op{colim}_{\za<\zg}\za$ in $\zl$, i.e., for the colimit $(\zg,\zb<\zg)$ of the diagram $0\to 1\to\ldots\to\za\to\za+1\to\ldots(\zg)$ in $\zl$, the colimit of the diagram $X_0\to X_1\to\ldots\to X_\za\to X_{\za+1}\to\ldots(\zg)$ in $\tt C$ is $(X_\zg,X_{\zb\zg})$. In other words, \be\label{Interpret2}X_\zg=\op{colim}_{\zb\le\za<\zg}X_\za\quad\text{and}\quad X_{\zb\zg}=\op{colim}_{\zb\le\za<\zg}X_{\zb\za}\;.\ee\end{rem}

We are now prepared to show, by transfinite induction on $\zg$, that the {\it temporary} hypothesis assuming that $F_{\zb\zg}C$ (the case of $F_{\zb\zg}D$ is similar) is injective for $\zb<\zg<\zl$, is in fact a consequence of the {\it actual} assumptions of Lemma \ref{L:limitord}. The induction starts, since $F_{\zb,\zb+1}C$ is injective for $\zb+1<\zl$. The induction assumption is that $F_{\zb\za}C$ is injective for $\zb<\za<\zg$. In the case $\zg\in\mathbf{O}_s$, we have $F_{\zb\zg}C=F_{\zg-1,\zg}C\,F_{\zb,\zg-1}C$, which is injective, because the first acting map is injective in view of the induction assumption (or the fact that it is identity) and the second map is injective since $(F_\zb C)_{\zb<\zl}$ is a $\zl$-filtration. If $\zg\in\mathbf{O}_\ell$, $\zg=\op{colim}_{\za<\zg}\za$, it follows from (\ref{Interpret2}), applied to the colimit respecting functor $X=\op{F}C$, that $$F_{\zb\zg}C:F_\zb C\to F_\zg C\;$$ is the map $$\op{colim}_{\zb\le\za<\zg}F_{\zb\za}C:F_\zb C\to \op{colim}_{\zb\le\za<\zg}F_\za C\;.$$ Moreover, the equation (\ref{Interpret1}) shows that the map (\ref{injcolim}) is nothing but $\op{colim}_{\zb\le\za<\zl}F_{\zb\za}C$. When, at the beginning of the proof of Lemma \ref{L:limitord}, the role of $\zl$ is played by $\zg$, the temporary assumption is exactly the induction assumption, so that $F_{\zb\zg}=\op{colim}_{\zb\le\za<\zg}F_{\zb\za}C$ is the natural injection (\ref{injcolim}) for the considered case $\zl=\zg$, what eventually removes the temporary assumption.\medskip

In the sequel, we omit the subscript $\bullet\,$, as well as the index $n$ of chain maps and differentials.\medskip

When considering both colimits, $C$ and $D$, we use the above notation, adding a superscript $C$ or $D$, if confusion has to be avoided. Further, the colimit map $f=\op{colim}_{\zb<\zl}\zvf_\zb$ is obtained using the universality of the colimit $C=\op{colim}_{\zb<\zl}\op{F}_\zb C$. More precisely, the ${\tt DG}\text{\it\small R}\,{\tt M}$-morphisms $\zvf_\zb:\op{F}_\zb C\to D$ factor through $C$, i.e., \be\label{Univ}f|_{\op{F}_{\zb}C}=\zvf_\zb\;.\ee

We are now prepared to show that the ${\tt DG}\text{\it\small R}\,{\tt M}$-morphism $f$ induces an isomorphism of graded $R$-modules in homology.\medskip

If the induced degree zero ${\tt Mod}(R)$-morphism $H(f)$ is not injective, one of its components $H(f):H_n(C)\to H_n(D)$, has a non-trivial kernel, i.e., there is a $\p^C$-cycle $c_{n}$ that is not a $\p^C$-boundary, such that \be\label{NonTrivKer}f\,c_n=\p^D\,d_{n+1}\;.\ee We have \be\label{Classes}c_n=c_{\zb,n}\quad\text{and}\quad d_{n+1}=d_{\zg,n+1}\;,\ee for some $\zb,\zg<\zl.$ It is clear that $c_{\zb,n}$ is a $\p^C_{\zb}$-cycle, but not a $\p^C_{\zb}$-boundary. Moreover, $$\zvf_{\zb}\, c_{\zb,n}=f\,c_n=\p^D_{\zg}d_{\zg,n+1}\;.$$ Depending on whether $\zb\ge\zg$ or $\zb<\zg$, this contradicts the fact that $H(\zvf_{\zb})$ or that $H(\zvf_\zg)$ is an isomorphism. Therefore, we finally conclude that $H(f)$ is indeed injective. \medskip

As for the subjectivity of $H(f):H_n(C)\to H_n(D)$, let $v_n\in D_n\cap\ker\p^D$: $v_n=v_{\zb,n}$ and $v_{\zb,n}\in\op{F}_{\zb,n}D\cap \ker\p^D_\zb$. Since $H(\zvf_{\zb}):H(\op{F}_\zb C)\to H(\op{F}_\zb D)$ is surjective, the homology class $[v_{\zb,n}]_{\op{im}\p^D_\zb}$ is the image by $H(\zvf_{\zb})$ of the homology class of some $u_{\zb,n}\in\op{F}_{\zb,n}C\cap\ker\p^C_{\zb}$. Thus $$\zvf_\zb u_{\zb,n}=v_{\zb,n}+\p^D_\zb v_{\zb,n+1}\quad\text{and}\quad f\,u_{\zb,n}= v_n+\p^D\,v_{\zb,n+1}\;.$$ Since $u_{\zb,n}\in C_n\cap\ker \p^C$, it follows that $[u_{\zb,n}]_{\op{im}\p^C}\in H_n(C)$ is sent by $H(f)$ to $[v_n]_{\op{im}\p^D}\in H_n(D)$.
\endproof

To state and prove the next lemma, we need some preparation.\medskip

Consider the setting of Lemma \ref{L:limitord}. The cokernel of any ${\tt DG}\text{\it\small R}\,{\tt M}$-map $g:C'\to C''$ is computed degree-wise, so that $\op{coker}g:C''\to C''/\op{im}g$, where the {\small RHS} differential is induced by the differential of $C''$.\medskip

In our context, we thus get that, for any $\zb+1<\zl$, the cokernel of $\op{F}_{\zb,\zb+1}C$ is the ${\tt DG}\text{\it\small R}\,{\tt M}$-morphism $$h^C_{\zb+1}:\op{F}_{\zb+1}C\ni c_{\zb+1}\mapsto [c_{\zb+1}]_{\op{F}_\zb C}\in\op{F}_{\zb+1}C/\op{F}_\zb C\;.$$ The target complex is denoted by $\op{Gr}_{\zb+1}C\in {\tt DG}\text{\it\small R}\,{\tt M}$ and its differential is the differential $\p^C_{\zb+1,\sharp}$ induced by $\p^C_{\zb+1}$. It follows that $\zvf_{\zb+1}$ induces a ${\tt DG}\text{\it\small R}\,{\tt M}$-map $$\zvf_{\zb+1,\sharp}:\op{Gr}_{\zb+1}C\to \op{Gr}_{\zb+1}D\;.$$ It is possible to extend $\op{Gr}C$, defined so far on successor ordinals, to a colimit respecting functor $\op{Gr}C:\zl\to {\tt DG}\text{\it\small R}\,{\tt M}$, which we call {\em $\zl$-graduation associated to the $\zl$-filtration} $\op{F}C:\zl\to {\tt DG}\text{\it\small R}\,{\tt M}$. \medskip

Although we will not need this extension, we will use the precise definition of a colimit respecting functor $F:\tt C\to \tt D$. Recall first that, if $J:\tt I\to \tt C$ is a $\tt C$-diagram, its $\tt C$-colimit, if it exists, is an object $c\in\tt C$, together with $\tt C$-morphisms $\zh_i:J_i\to c$, such that $\zh_jJ_{ij}=\zh_i$ (i.e., together with a natural transformation $\zh$ between $J$ and the constant functor $c$). The functor $F$ is said to be colimit preserving, if the $\tt D$-colimit of $FJ$ exists and is given by the object $F(c)$, together with the $\tt D$-morphisms $F(\zh_i):F(J_i)\to F(c)$ (i.e., the natural transformation is the whiskering of $\zh$ and $F$).\medskip

Observe now that, since $\op{F}C:\zl\to{\tt DG}\text{\it\small R}\,{\tt M}$ is colimit respecting by assumption, we have, for $\za<\zl, \za\in\mathbf{O}_\ell, \za=\op{colim}_{\zb<\za}\zb$, $$\op{F}_\za C=\op{colim}_{\zb<\za}\op{F}_\zb C\,,\quad\text{together with the canonical injections}\quad \op{F}_{\zb\za}C:\op{F}_\zb C\hookrightarrow\op{F}_\za C\;.$$ The same holds for $C$ replaced by $D$. Since the colimit $\op{colim}_{\zb<\za}\zvf_\zb$ is obtained using the universality of $(\op{F}_\za C, \op{F}_{\bullet\za}C)$ with respect to the cocone $(F_\za D, \op{F}_{\bullet\za}\!D\,\zvf_\bullet)$, this colimit map is the unique ${\tt DG}\text{\it\small R}\,{\tt M}$-map $m_\za:\op{F}_\za C\to\op{F}_\za D$, such that $m_\za\op{F}_{\zb\za}C=\op{F}_{\zb\za} D\,\zvf_\zb$, i.e., such that $m_\za|_{\op{F}_\zb C}=\zvf_\zb$. Hence, for any limit ordinal $\za<\zl$, we have \be\label{PhiLim}\zvf_\za=\op{colim}_{\zb<\za}\zvf_\zb\;.\ee

\begin{lem}\label{L:filtrweq}
Let $\zl\in\mathbf{O}\setminus\{0\}$, let $C,D\in{\tt DG}\text{\it\small R}\,{\tt M}$, with $\zl$-filtrations $(\op{F}_\zb C)_{\zb\in \lambda}$ and $(\op{F}_\zb D)_{\zb\in \lambda}$, and let $f:C\rightarrow D$ be a ${\tt DG}\text{\it\small R}\,{\tt M}$-morphism, which is compatible with the filtrations and whose corresponding natural transformation is denoted by $\zvf:\op{F}C\to \op{F}D$. Assume that $\zvf_0:\op{F}_0C\rightarrow \op{F}_0D$ is a weak equivalence and that, for any $\zg+1<\zl$, the induced ${\tt DG}\text{\it\small R}\,{\tt M}$-map $\zvf_{\zg+1,\sharp}:\op{Gr}_{\zg+1}C\rightarrow \op{Gr}_{\zg+1}D$ is a weak equivalence. Then $\zvf_\zb:\op{F}_\zb C\rightarrow \op{F}_\zb D$ is a weak equivalence, for all $\zb<\zl$, and therefore $f$ is also a weak equivalence.
\end{lem}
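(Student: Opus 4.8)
The plan is to prove, by transfinite induction on $\zb<\zl$, that every component $\zvf_\zb:\op{F}_\zb C\to\op{F}_\zb D$ is a weak equivalence; the final assertion that $f$ is a weak equivalence then follows at once from Lemma \ref{L:limitord}, applied to the full $\zl$-filtrations of $C$ and $D$ and to the colimit map $f=\op{colim}_{\zb<\zl}\zvf_\zb$, whose hypotheses will by then be verified. The base case $\zb=0$ is precisely the standing assumption that $\zvf_0$ is a weak equivalence, so only the successor step and the limit step of the induction require argument.

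For the successor step, suppose $\zb=\zg+1<\zl$ and that $\zvf_\zg$ is already known to be a weak equivalence. Since $\op{F}C$ and $\op{F}D$ are $\zl$-filtrations, the maps $\op{F}_{\zg,\zg+1}C$ and $\op{F}_{\zg,\zg+1}D$ are monomorphisms, and the degree-wise cokernel construction introduced just before the lemma yields short exact sequences in ${\tt DG}\text{\it\small R}\,{\tt M}$,
\[
0\to \op{F}_\zg C\to \op{F}_{\zg+1}C\xrightarrow{h^C_{\zg+1}}\op{Gr}_{\zg+1}C\to 0\quad\text{and}\quad 0\to \op{F}_\zg D\to \op{F}_{\zg+1}D\xrightarrow{h^D_{\zg+1}}\op{Gr}_{\zg+1}D\to 0,
\]
together with a morphism between them whose three vertical components are $\zvf_\zg$, $\zvf_{\zg+1}$ and $\zvf_{\zg+1,\sharp}$. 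Passing to the associated long exact sequences in homology produces a commutative ladder in which the maps induced by $\zvf_\zg$ and by $\zvf_{\zg+1,\sharp}$ are isomorphisms in every degree, by the induction hypothesis and by the standing hypothesis of the lemma respectively. The five lemma then forces $H(\zvf_{\zg+1})$ to be an isomorphism in each degree, so that $\zvf_{\zg+1}$ is a weak equivalence.

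For the limit step, let $\za<\zl$ be a limit ordinal and assume that $\zvf_\zg$ is a weak equivalence for all $\zg<\za$. Because $\op{F}C$ and $\op{F}D$ are colimit respecting, $\op{F}_\za C=\op{colim}_{\zg<\za}\op{F}_\zg C$ and $\op{F}_\za D=\op{colim}_{\zg<\za}\op{F}_\zg D$, while (\ref{PhiLim}) identifies $\zvf_\za$ with $\op{colim}_{\zg<\za}\zvf_\zg$. The truncations of the two filtrations to $\za$ are $\za$-filtrations of $\op{F}_\za C$ and $\op{F}_\za D$, and $\zvf_\za$ is compatible with them, all components of the restricted natural transformation being weak equivalences by the induction hypothesis. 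Applying Lemma \ref{L:limitord}, with $\zl$ replaced by the nonzero ordinal $\za$, shows that $\zvf_\za$ is a weak equivalence and completes the induction; the concluding statement about $f$ is then the promised final invocation of Lemma \ref{L:limitord}. I expect this limit step to be the only delicate point, since it rests on the identification (\ref{PhiLim}) of $\zvf_\za$ as the colimit of the $\zvf_\zg$ and on checking that the truncated diagrams are genuine $\za$-filtrations so that Lemma \ref{L:limitord} is applicable; by contrast, the successor step is routine homological algebra.
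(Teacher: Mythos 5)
Your proposal is correct and follows essentially the same route as the paper's proof: transfinite induction with the five lemma applied to the ladder of long exact homology sequences at successor ordinals, and Lemma \ref{L:limitord} applied to the truncated filtrations at limit ordinals (and to the full filtrations for the final claim about $f$). The only detail the paper spells out that you pass over silently is the commutativity of the square involving the connecting homomorphisms, which is the standard naturality argument.
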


\proof
We proceed by transfinite induction. The induction starts, since $\zvf_0$ is a weak equivalence by assumption. Let now $\zb<\zl$ and assume that $\zvf_\za$ is a weak equivalence for all $\za<\zb$.\medskip

If $\zb\in\mathbf{O}_s$, say $\zb=\zg+1$, we consider the commutative diagram
\begin{center}
\begin{tikzcd}[column sep=0.8em]
0 \ar[r]&\op{F}_\zg C \ar[hookrightarrow]{r} \ar{d}{\sim}&\op{F}_{\zg+1} C \ar[r] \ar[d]&\op{Gr}_{\zg+1}C \ar[r] \ar{d}{\sim} & 0 \\
0 \ar[r]&\op{F}_\zg D \ar[hookrightarrow]{r}        &\op{F}_{\zg+1}D \ar[r]        &\op{Gr}_{\zg+1}D \ar[r]         & 0
\end{tikzcd}\quad,
\end{center}
whose rows are exact and whose left (resp., right) vertical arrow $\zvf_\zg$ (resp., $\zvf_{\zg+1,\sharp}$) is a weak equivalence. The connecting homomorphism theorem now induces in ${\tt Mod}(R)$ the diagram
{\footnotesize
\begin{equation}\nonumber
\begin{tikzcd}[column sep=0.7em]
\ldots \ar[r]&H_{n+1}(\op{Gr}_{\gamma+1}C) \ar[r] \ar{d}{\cong} &H_n(\op{F}_\zg C) \ar[r] \ar{d}{\cong}&H_n(\op{F}_{\gamma+1}C) \ar[r] \ar[d] &H_n(\op{Gr}_{\gamma+1}C) \ar[r] \ar{d}{\cong} &H_{n-1}(\op{F}_\zg C) \ar[r] \ar{d}{\cong}& \ldots\\
\ldots \ar[r]&H_{n+1}(\op{Gr}_{\gamma+1}D) \ar[r] &H_n(\op{F}_\zg D)\ar[r] &H_n(\op{F}_{\zg+1} D) \ar[r] &H_n(\op{Gr}_{\gamma+1}D) \ar[r] &H_{n-1}(\op{F}_\zg D)\ar[r] & \ldots
\end{tikzcd}
\end{equation}}with exact rows and isomorphisms as non-central vertical arrows. Further the diagram commutes. Indeed, the connecting homomorphism $\zD^C:H_{n+1}(\op{Gr}_{\zg+1}C)\to H_{n}(\op{F}_{\zg}C)$ is defined by $$\zD^C[[c_{\zg+1}]_{\op{F}_\zg C}]_{\op{im}\p^C_{\zg+1,\sharp}}=[c_{\zg}]_{\op{im}\p^C_\zg}$$ if and only if there exists $c'_{\zg+1}\in\op{F}_{\zg+1}C$, such that $$[c_{\zg+1}]_{\op{F}_\zg C}=[c'_{\zg+1}]_{\op{F}_{\zg}C}\quad\text{and}\quad \p^C_{\zg+1}c'_{\zg+1}=c_\zg\;,$$i.e., if and only if $$\p^C_{\zg+1}c_{\zg+1}=c_\zg\;.$$ Hence, in the {\small LHS} square of the preceding diagram, the top-right composition leads to $$H(\zvf_\zg)\,\zD^C[[c_{\zg+1}]_{\op{F}_\zg C}]_{\op{im}\p^C_{\zg+1,\sharp}}=[\zvf_\zg c_{\zg}]_{\op{im}\p^D_\zg}\;.$$ On the other hand, the left-bottom composition $$\zD^D\,H(\zvf_{\zg+1,\sharp})[[c_{\zg+1}]_{\op{F}_\zg C}]_{\op{im}\p^C_{\zg+1,\sharp}}=\zD^D[\zvf_{\zg+1,\sharp}[c_{\zg+1}]_{\op{F}_\zg C}]_{\op{im}\p^D_{\zg+1,\sharp}}=\zD^D[[\zvf_{\zg+1}c_{\zg+1}]_{\op{F}_\zg D}]_{\op{im}\p^D_{\zg+1,\sharp}}$$ coincides with the value $[\zvf_\zg c_{\zg}]_{\op{im}\p^D_\zg}$, if and only if $$\p^D_{\zg+1}\zvf_{\zg+1}c_{\zg+1}=\zvf_\zg c_\zg\:,$$ what is obviously the case.\medskip

It now follows from the Five Lemma that $H_n(\zvf_{\zg+1})$ is an isomorphism, for all $n\in \mathbb{N}$, i.e., that $\zvf_{\zg+1}=\zvf_\zb$ is a weak equivalence.\medskip

If $\zb\in\mathbf{O}_\ell$, it follows from Lemma \ref{L:limitord} that $\zvf_\zb$ is a weak equivalence.
\endproof

The last lemma may be advantageously used to prove Lemma \ref{WeqCof}.\medskip

\noindent {\bf Proof of Lemma \ref{WeqCof}\label{tocum}}. Recall that our aim is to prove that, if, in $\tt DG\Dc M$, $f:A\to B$ is a weak equivalence and $M$ is a cofibrant object, then $f\0\Id_M:A\0 M\to B\0 M$ is a weak equivalence as well (we omit the subscript $\bullet$ in the tensor product). It follows from the description of the model structure on $\tt DG\Dc M$ \cite{DPP}, that cofibrant objects are exactly those differential graded $\Dc$-modules that are degree-wise $\Dc$-projective. In particular, each term of $M$ is $\Dc$-flat. On the other hand, $\Dc$ is $\Oc$-projective and thus $\Oc$-flat. Therefore, if $0\to N\to P\to Q\to 0$ is a short exact sequence ({\small SES}) in $\tt Mod(\Oc)$, the free $\Dc$-module functor $\Dc\0_\Oc \bullet$ on $\tt Mod(\Oc)$ transforms the considered {\small SES} into a new {\small SES} in $\tt Mod(\Oc)$ and even in $\tt Mod(\Dc)$. Further, left-tensoring the latter sequence over $\Dc$ by any term $M_k$, leads to a {\small SES} in Abelian groups $\tt Ab$ and even in $\tt Mod(\Oc)$. Since $M_k\0_\Dc\Dc\0_\Oc \bullet\simeq M_k\0_\Oc\bullet\,$, one deduces that any term $M_k$ of $M$ is also $\Oc$-flat.\medskip

Let now $(M_{\leq k},d_M)\in\tt DG\Dc M$ be the chain complex $(M,d_M)$ truncated at degree $k\in\N$. Then, in the diagram (\ref{e:moj}) below, the top and the bottom rows are $\zw$-filtrations of $A\otimes M$ and $B\otimes M$, respectively. In addition, the product $f\0\Id_M$ is compatible with these $\zw$-filtrations and is the colimit of the natural transformation $\zvf_\bullet:=f\0\Id_{M\le\bullet}\,$.
{\begin{center}
 \begin{equation}\label{e:moj}\begin{tikzcd}
A\otimes M_{\leq 0} \ar[r,hook] \ar{d}{f\otimes \Id_{M_{\leq 0}}} &A\otimes M_{\leq 1} \ar[hook]{r} \ar{d}{f\otimes \Id_{M_{\leq 1}}} &\ldots \ar[r,hook]&A\otimes M_{\leq n} \ar[hook]{r} \ar{d}{f\otimes \Id_{M_{\leq n}}} & \ldots \\
B\otimes M_{\leq 0} \ar[r,hook]    &B\otimes M_{\leq 1} \ar[r,hook] &\ldots \ar[r,hook]&B\otimes M_{\leq n} \ar[hook]{r}   & \ldots
\end{tikzcd}\end{equation}\end{center}
}
\noindent The morphism $f\0\Id_M$ is a weak equivalence, if the assumptions of Lemma \ref{L:filtrweq} are satisfied. For any $1\le k+1<\zw$, the induced map $$\zvf_{k+1,\sharp}:\op{Gr}_{k+1}(A\0 M)\to \op{Gr}_{k+1}(B\0 M)\quad\text{is}\quad f\otimes \Id_{M_{k+1}}:A\otimes M_{k+1}\to B\otimes M_{k+1}\;.$$ Moreover, the map $$\zvf_0:A\0 M_{\le 0}\to B\0 M_{\le 0}\quad\text{is}\quad f\0\Id_{M_0}:A\0 M_0\to B\0 M_0\;.$$ To show that $f\0\Id_{M_k}$, $k\in\N$, is a weak equivalence, we prove the equivalent statement that its mapping cone $\op{Mc}(f\0\Id_{M_k})$ is acyclic. Notice that \be\label{MapCone}\op{Mc}(f\otimes \Id_{M_k})\simeq (\op{Mc}(f))[-k]\otimes M_k\;,\ee as $\tt DG\Dc M$, since $M_k$ has zero differential. To find that the {\small RHS} is acyclic, it suffices to consider the involved complexes in $\tt DG\Oc M$, to recall that $M_k$ is $\Oc$-flat and that, since $f$ is weak equivalence, $H((\op{Mc}(f))[-k])=0.$ The looked for acyclicity then follows from K\"unneth's formula.\endproof

\subsection{HAC condition 1: properness}\label{SS:properness}

The first of the {\small HAC} assumptions mentioned at the beginning of this section is the condition HAC1 \cite[Assumption 1.1.0.1]{TV08}.\medskip

\noindent{\bf HAC1}. The underlying model category $\tt C$ is proper, pointed, and, for any $c',c''\in\tt C$, the morphisms

\be\label{HAC1} Qc'\coprod Qc''\to c'\coprod c''\to Rc'\prod Rc''\;,\ee \vspace{0.1mm}

\noindent where $Q$ (resp., $R$) denotes the cofibrant (resp., the fibrant) replacement functor, are weak equivalences. Moreover, the homotopy category $\tt Ho(C)$ of $\tt C$ is additive.\medskip

Assumption {\bf HAC1} implies that $\op{Hom}_{\tt C}(c',c'')$ is an Abelian group. This fact and the homotopy part of the assumption allow to understand that the idea is to require that $\tt C$ be a kind of `weak' additive or Abelian category. \medskip

Let us briefly explain the different parts of condition {\small HAC1}. Properness is defined as follows \cite[Def. 13.1.1]{Hir}:

\begin{Defi}
A model category $\tt C$ is said to be:
\begin{enumerate}
\item \emph{left proper}, if every pushout of a weak equivalence along a cofibration is a weak equivalence,
\item \emph{right proper}, if every pullback of a weak equivalence along a fibration is a weak equivalence,
\item \emph{proper}, if it is both, left proper and right proper.
\end{enumerate}
\end{Defi}

Pointed means that the category has a zero object $0$. The first morphism in (\ref{HAC1}) comes from the composition of the weak equivalences $Qc'\to c'$ and $Qc''\to c''$ with the canonical maps $c'\to c'\coprod c''$ and $c''\to c'\coprod c''$, respectively. As for the second, note that, in addition to the weak equivalence $c'\to Rc'$, we have also the map $c'\to 0\to Rc''$, hence, finally, a map $c'\to Rc'\prod Rc''$. Similarly, there is a map $c''\to Rc'\prod Rc''$, so, due to universality, there exists a map $c'\coprod c''\to Rc'\prod Rc''$.\medskip

We now check HAC1 for the basic model category $\tt DG\Dc M$ of the present paper.\medskip

Properness of $\tt DG\Dc M$ will be dealt with in Theorem \ref{T:leftproper} below. Since $\tt DG\Dc M$ is Abelian, hence, additive, it has a zero object -- in the present situation $(\{0\},0)$ -- . As for the arrows in (\ref{HAC1}), note that the coproduct and the product of chain complexes of modules are computed degree-wise and that finite coproducts and products of modules coincide and are just direct sums. Since the direct sum of two quasi-isomorphisms is a quasi-isomorphism, the first canonical arrow is a quasi-isomorphism. Recall moreover, that, in $\tt DG\Dc M$, every object is fibrant, so that we can choose the identity as fibrant replacement functor $R$. Therefore, the second canonical arrow is the identity map and is thus a quasi-isomorphism. Further, the homotopy category $\tt Ho(DG\Dc M)$ is equivalent to the derived category $\mathbb{D}^+({\tt Mod}(\Dc))$ (see Appendix \ref{HDCatDFun}), which is triangulated and thus additive. This additive structure on the derived category can be transferred to the homotopy category (so that the functor that implements the equivalence becomes additive).\medskip

We are now left with verifying properness of $\tt DG\Dc M$. By \cite[Corollary 13.1.3]{Hir}, a model category all of whose objects are fibrant is right proper: it is easily seen that this is the case, not only for $\tt DG\Dc M$, but also for $\tt DG\Dc A$ and ${\tt Mod}(\Ac)$. We will check that these three categories are left proper as well, so that

\begin{thm}\label{T:leftproper}
The model categories $\tt DG\Dc M,$ $\tt DG\Dc A$ and ${\tt Mod}(\Ac)$ are proper.
\end{thm}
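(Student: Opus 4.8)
The plan is to observe that right properness of all three categories has already been settled (every object is fibrant, so \cite[Corollary 13.1.3]{Hir} applies), and to concentrate on \emph{left} properness: given a weak equivalence $f\colon A\to B$ and a cofibration $i\colon A\to C$, I must show that the pushout $f'\colon C\to D$ of $f$ along $i$ is again a weak equivalence.

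For the two abelian categories $\tt DG\Dc M$ and $\tt Mod(\Ac)$ the argument will be purely homological. In both the cofibrations are monomorphisms: in $\tt DG\Dc M$ they are by definition the injective chain maps with degreewise projective cokernel, while in $\tt Mod(\Ac)$ they are retracts of relative Sullivan $\Ac$-modules (Theorem \ref{L:cofobjsinAMod}), hence retracts of monomorphisms, hence monomorphisms. Since in an abelian category a pushout of a monomorphism along an arbitrary map is again a monomorphism with canonically isomorphic cokernel, the pushout square produces a morphism of short exact sequences
\[
\begin{tikzcd}
0\arrow{r}&A\arrow{r}{i}\arrow{d}{f}&C\arrow{r}\arrow{d}{f'}&\Coker i\arrow{r}\arrow{d}{\cong}&0\\
0\arrow{r}&B\arrow{r}{i'}&D\arrow{r}&\Coker i'\arrow{r}&0
\end{tikzcd}
\]
whose outer vertical arrows are a weak equivalence and an isomorphism. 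Passing to the long exact homology sequences of the two rows (for $\tt Mod(\Ac)$ the homology is computed through the exact forgetful functor to $\tt DG\Dc M$, which detects weak equivalences) and applying the Five Lemma shows that $f'$ is a homology isomorphism, i.e. a weak equivalence. This disposes of $\tt DG\Dc M$ and $\tt Mod(\Ac)$.

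The category $\tt DG\Dc A$ is not abelian, and handling it is the heart of the matter. First I would reduce to the case where $i$ is a relative Sullivan $\Dc$-algebra $\Ac\to(\Ac\0\Sc V,d)$: a general cofibration is a retract of such a map, the pushout of $f$ along a retract is a retract of the pushout of $f$ along the relative Sullivan algebra, and weak equivalences are closed under retracts. For a relative Sullivan algebra the pushout is $D=\Bc\0_\Ac(\Ac\0\Sc V)\cong\Bc\0\Sc V$ by Lemma \ref{SimplTensA}, and $f'$ is the map $f\0\Id_{\Sc V}$ carrying the transported lowering differential.

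The genuinely delicate point is then that $f\0\Id_{\Sc V}$ is a weak equivalence \emph{even though} its source and target carry the twisted Sullivan differential rather than the naive tensor differential. To untwist it I would filter $\Sc V$ by the monomials in the homogeneous basis $(v_\za)_{\za<\zl}$ of $V$, well-ordered by the multiset order on the multiset of indices occurring in a monomial (a well-order, since $\zl$ is). Because the differential is lowering, $dv_\za\in\Ac\0\Sc V_{<\za}$, the Leibniz rule shows that $d$ carries each monomial to a combination of \emph{strictly smaller} monomials, plus the $d_\Ac$-term acting on the coefficient; hence this defines a filtration of $\Ac\0\Sc V$ (and of $\Bc\0\Sc V$) by subcomplexes, compatible with $f'$, whose associated graded differential is exactly the naive $d_\Ac\0\Id$ (resp.\ $d_\Bc\0\Id$). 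Each graded piece is therefore of the form $\Ac\0 M_w\to\Bc\0 M_w$, where $M_w$ is a $\Dc$-projective module with zero differential — a cofibrant object of $\tt DG\Dc M$ — so by Lemma \ref{WeqCof} the map $f\0\Id_{M_w}$ is a weak equivalence. Lemma \ref{L:filtrweq}, whose limit-ordinal step rests on Lemma \ref{L:limitord}, then assembles these graded equivalences into the conclusion that $f'=f\0\Id_{\Sc V}$ is a weak equivalence, completing the proof. I expect the main obstacle to be precisely this untwisting step: producing a filtration that is simultaneously stable under the lowering differential and has cofibrant, naively-differentiated associated graded pieces, so that the module-level results \ref{WeqCof}, \ref{L:limitord} and \ref{L:filtrweq} can be brought to bear.
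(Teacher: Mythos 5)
Your proposal is correct, but it reorganizes the argument in two places worth comparing with the paper's proof. For the abelian cases ${\tt DG\Dc M}$ and ${\tt Mod}(\Ac)$ you give a genuinely different and more elementary argument: cofibrations are monomorphisms of underlying complexes, the pushout of a monomorphism in an abelian category is a monomorphism with canonically isomorphic cokernel, and the Five Lemma applied to the two long exact homology sequences finishes the job. The paper instead runs these cases through the same transfinite filtration machinery as ${\tt DG\Dc A}$ (reducing to $I$-cells and invoking Lemmas \ref{L:limitord} and \ref{L:filtrweq} for $2$-filtrations), so your route buys a substantial shortcut there; the paper's route buys uniformity, since the filtration lemmas are needed for ${\tt DG\Dc A}$ anyway. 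For ${\tt DG\Dc A}$ your strategy is essentially the paper's -- reduce to relative Sullivan algebras by the retract trick, identify the pushout with $\Bc\0\Sc V$ carrying the transported lowering differential, filter so that the associated graded differential is untwisted, and assemble via Lemmas \ref{L:filtrweq} and \ref{L:limitord} plus a K\"unneth/flatness argument on the graded pieces -- but you collapse the paper's two-level induction (the polynomial-degree filtration $\Sc^{\le k}S^n$ inside each single cell attachment in Step 1 and Lemma \ref{L:filtrtech}, followed by transfinite induction over the cell attachments in Step 2) into a single filtration indexed by the multiset order on monomials. That is a legitimate and arguably cleaner decomposition, though it puts more weight on bookkeeping you only sketch: you must check that the multiset order on finite multisets over $\zl$ is a well-order, re-index it by an ordinal so that each monomial type is adjoined at a successor step and limit steps are genuine colimits (as the paper does with $\mathbf{O}_s$), and justify that each graded piece $M_w\simeq\Dc v_{\za_1}\odot\cdots\odot\Dc v_{\za_k}$ is cofibrant in ${\tt DG\Dc M}$ (or at least termwise $\Oc$-flat, which is what the paper actually uses via the splitting of the symmetrization map and Proposition \ref{Projectivity}) before invoking Lemma \ref{WeqCof} or K\"unneth. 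One last small point: the claim that cofibrations in ${\tt Mod}(\Ac)$ are monomorphisms follows from the \emph{proof} of Theorem \ref{L:cofobjsinAMod} (cofibrations are retracts of $\zS(I)$-cell maps, which are relative Sullivan module inclusions), not from its statement, which concerns cofibrant objects only. None of these is a gap in substance; they are points to make explicit.
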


\proof Since $\Oc\in\tt DG\Dc A$ and ${\tt Mod}(\Oc)=\tt DG\Dc M$, it suffices to prove the statement for $\tt DG\Dc A$ and ${\tt Mod}(\Ac)$, where $\Ac$ is any object of $\tt DG\Dc A$. Below, the letter $\tt C$ denotes systematically any of the latter categories, $\tt DG\Dc A$ or ${\tt Mod}(\Ac)$. \medskip

We already mentioned that $\tt C$ is cofibrantly generated, so that any cofibration is a retract of a map in $I$-cell, where $I$ is the set of generating cofibrations \cite[Proposition 2.1.18(b)]{Ho99}. More precisely, the small object argument allows to factor any $\tt C$-morphism $s:X\to W$ as $s=p\,i$, with $i\in I$-cell\,$\subset\op{Cof}$ and $p\in I$-inj\,$=\op{TrivFib}$. If $s\in\op{Cof}$, it has the {\small LLP} with respect to $p$. Hence, the commutative diagram

\begin{equation}\label{e:sretri}
\begin{tikzcd}
X \ar[equal]{r} \ar[rightarrowtail]{d}{s}&X \ar[equal]{r} \ar[rightarrowtail]{d}{i}&X\ar[rightarrowtail]{d}{s}\\
W\ar{r}[swap]{l}                   &U \ar[twoheadrightarrow]{r}{\sim}[swap]{p}        &W
\end{tikzcd}\quad,
\end{equation}
where $l$ is the lift and where $p\,l=1$.\medskip

We must show that the pushout $g:W\to V$ of a weak equivalence $f:X\to Y$ along the cofibration $s:X\to W$ is a weak equivalence.\medskip

Note first that, if $h:U \to Z$ is the pushout of $f$ along $i$, then $g$ is a retract of $h$. To see this, consider the following commutative diagram, where the dashed arrows come from the universality of a pushout:
\begin{equation}
\begin{tikzcd}
X \arrow{r}{f} \arrow{d}{s} \ar[bend right]{dd}[swap]{i}\ar[bend right=43]{ddd}[swap]{s} & Y \arrow{d} \arrow[bend left]{ddr}\arrow[bend left]{dddrr}\\
W\arrow{d}{l} \arrow{r}{g} & V \arrow[dashed]{dr}\\
U\ar{d}{p}\ar{rr}{h}& & Z\arrow[dashed]{dr}\\
W\ar{rrr}{g}&&& V
\end{tikzcd}.
\ee
\noindent Due to the uniqueness property encrypted in any universal construction, the composite of the two dashed arrows is the identity of $V$. Hence, $g$ is indeed a retract of $h$. As weak equivalences are closed under retracts, it thus suffices to show that the pushout $h$ is a weak equivalence.\medskip

We will actually prove that the pushout of a weak equivalence along any map in $I$-cell, i.e., along any transfinite composition of pushouts of maps in $I$, is a weak equivalence.\medskip

{\bf Step 1}. In this step, we explain -- separately in each of the two categories $\tt DG\Dc A$ and ${\tt Mod}(\Ac)$ -- why the pushout of a weak equivalence along a pushout of a map in $I$, i.e., along a pushout of a generating cofibration, is again a weak equivalence.\medskip

In $\tt DG\Dc A$, see \cite[Example 1]{DPP2}, any pushout of a generating cofibration is a (minimal) relative Sullivan $\Dc$-algebra $(T,d_T)\hookrightarrow(T\otimes\,\Sc S^n, d)$, where $d$ is defined as described in \cite[Lemma 1]{DPP2}. Similarly, in ${\tt Mod}(\Ac)$, see Proof of Theorem \ref{L:cofobjsinAMod}, any pushout of a generating cofibration is a relative Sullivan $\Ac$-module $(T,d_T)\hookrightarrow (T\oplus\Ac\0 S^n,d)$, where $d$ is defined as detailed in Lemma \ref{LemAmod}.\medskip

We first examine the $\tt DG\Dc A$-case. Here the pushout

\begin{equation}\nonumber
\begin{tikzcd}
X \ar{r}{i_X} \ar{d}{f}&X\0\Sc S^n \ar{d}{f\otimes \Id_{\Sc S^n}}\\
Y \ar{r}{i_Y}        &Y\0\Sc S^n
\end{tikzcd}\quad .
\end{equation}
of a weak equivalence $f:(X,\p)\xrightarrow{\sim} (Y,\zd)$ along a relative Sullivan $\Dc$-algebra $(X,\p)\hookrightarrow (X\otimes\, \Sc S^n,\p^{(1)})$ is made of
\begin{itemize}\item[-] the relative Sullivan $\Dc$-algebra $(Y,\zd)\hookrightarrow(Y\otimes\, \Sc S^n,\zd^{(1)})$, whose differential $\zd^{(1)}$ is given by
\be\label{e:d2c}
\zd^{(1)}(1_n):=f(\p^{(1)}(1_n))\in Y_{n-1}\cap \zd^{-1}\{0\}\;,
\ee
where $1_n$ is the basis of $S^n$, and
\item[-] the $\tt DG\Dc A$-morphism $f\otimes \Id_{\Sc S^n}:(X\otimes\, \Sc S^n,\p^{(1)})\to (Y\otimes\, \Sc S^n,\zd^{(1)})\,$.\vspace{3mm}
\end{itemize}
Reference \cite[Lemma 1(i)]{DPP2} allows to see that (\ref{e:d2c}) defines a relative Sullivan $\Dc$-algebra. Since the {\small RHS} arrow in the above diagram is necessarily an extension $\ze$ of $i_Y\,f:X\to Y\otimes\, \Sc S^n$, we apply \cite[Lemma 1(ii)]{DPP2} to the morphism $i_Y\,f$. Therefore we note that the relative Sullivan $\Dc$-algebra $X\otimes\, \Sc S^n$ is actually constructed according to \cite[Lemma 1(i)]{DPP2}. Indeed, in view of the first paragraph below \cite[Lemma 1]{DPP2}, since the differential $\p^{(1)}$ restricts to $\p$ on $X$ and satisfies $\p^{(1)}(1_n)\in X_{n-1}\cap \p^{-1}\{0\}$, it is necessarily given by Equation (9) in \cite[Lemma 1]{DPP2}. Hence, the reference \cite[Lemma 1(ii)]{DPP2} can be used and the extension $\ze$ is fully defined by $$\ze(1_n):=1_Y\0 1_n\in (Y\0\,\Sc S^n)_n\cap (\zd^{(1)})^{-1}\{f\,\p^{(1)}(1_n)\}\;.$$ The extending $\tt DG\Dc A$-morphism $\ze$ is then given, for any $x\in X$ and any $\zs\in\Sc S^n$, by $\ze(x\0 \zs)=f(x)\0 \zs$, so that $\ze=f\0\Id_{\Sc S^n}$. As concerns universality, let $h:Y\to E$ and $k:X\0\,\Sc S^n\to E$ be $\tt DG\Dc A$-maps, such that $k\,i_X=h\,f$, and define the `universality map' $\zm:Y\0\,\Sc S^n\to E$ as extension of $h$ (using the same method as for $\ze$), by setting $$\zm(1_n):=k(1_X\0 1_n)\in E_n\cap d_E^{-1}\{h\,\zd^{(1)}(1_n)\}\;.$$ To check the latter condition on $d_E$, it suffices to note that, on $1_n$, we have $$d_E\,k=k\,\p^{(1)}=k\,i_X\,\p^{(1)}=h\,f\,\p^{(1)}=h\,\zd^{(1)}\;,$$ due to (\ref{e:d2c}). Further, the condition $\zm\,i_Y=h$ is satisfied by construction, and to see that $\zm\,\ze=k$, we observe that $$\zm(\ze(x\0\zs))=h(f(x))\star_E\zm(\zs)\quad\text{and}\quad k(x\0 \zs)=k(x\0 1_\Oc)\star_E k(1_X\0\zs)=h(f(x))\star_E \zm(1_Y\0\zs)\;,$$ where $k(1_X\0\zs)$ coincides with $\zm(1_Y\0\zs)$, since both maps are $\tt DG\Dc A$-maps and $k(1_X\0 1_n)$ coincides with $\zm(1_Y\0 1_n)$, by definition. Eventually, uniqueness of the `universality map' is easily checked. \medskip

As $f\otimes \Id_{\Sc S^n}$ is a weak equivalence in $\tt DG\Dc A$ if it is a weak equivalence in $\tt DG\Dc M$, we continue working in the latter category. Notice first that, if $Z$ denotes $X$ or $Y$ and if $d^{(1)}$ denotes $\p^{(1)}$ or $\zd^{(1)}$, the differential $d^{(1)}$ stabilizes the graded $\Dc$-submodule $Z_k=Z\0\,\Sc^{\le k} S^n$ ($k\in\N=\zw$) of $Z\0\,\Sc S^n$ \cite[Lemma 1(i)]{DPP2}. Hence, the restriction $$\zvf_k:=f\0\Id_{\Sc^{\le k} S^n}:X_k\to Y_k$$ of the $\tt DG\Dc M$-map $f\otimes \Id_{\Sc S^n}$ is itself a $\tt DG\Dc M$-map. Moreover, the injections $Z_{k\ell}:Z_k\to Z_\ell$ ($k\le\ell$) are canonical $\tt DG\Dc M$-maps, so that we have a functor $Z_\ast:\zw\to\tt DG\Dc M$, with obvious colimit $Z\0\,\Sc S^n$. Since $\zvf_\ast:X_\ast\to Y_\ast$ is a natural transformation between the $\zw$-filtrations of $X\0\,\Sc S^n$ and $Y\0\,\Sc S^n$, with colimit $f\0\Id_{\Sc S^n}$, it remains to prove that the diagram
\begin{equation}\label{e:filtrarrcat}
\begin{tikzcd}
X \ar[hook,r] \ar{d}{f}[swap]{\sim}&X_1\ar{d}{\zvf_1}\ar[hook,r]&\ldots\ar[hook,r]&X_k\ar[hook,r]\ar{d}{\zvf_k}&\ldots\\
Y \ar[hook,r]&Y_1\ar[hook,r]&\ldots\ar[hook,r]&Y_k\ar[hook,r]&\ldots
\end{tikzcd}
\end{equation}
satisfies the hypotheses of Lemma \ref{L:filtrweq}, i.e., that for any $k$, the $\tt DG\Dc M$-map $\zvf_{k,\sharp}$ induced by $\zvf_k$ between the $k$-terms of the $\zw$-graduations associated to the two $\zw$-filtrations, is a weak equivalence. This will be done independently in Lemma \ref{L:filtrtech}, what then concludes our argument in the $\tt DG\Dc A$-case.\medskip

In the category ${\tt Mod}(\Ac)$, the pushout
\begin{equation}\nonumber
\begin{tikzcd}
X \ar{r}{i_X} \ar{d}{f}[swap]{\sim}&X\oplus \Ac\otimes S^n \ar{d}{f\otimes \Id_{\Ac\otimes S^n}}\\
Y \ar{r}{i_Y}        &Y\oplus \Ac\otimes S^n
\end{tikzcd}
\end{equation}
of a weak equivalence $f:(X,\p)\xrightarrow{\sim} (Y,\zd)$ along a relative Sullivan $\Ac$-module $(X,\p)\hookrightarrow(X\oplus \Ac\otimes S^n,\p^{(1)})$ is made of
\begin{itemize}\item[-] the relative Sullivan $\Ac$-module $(Y,\zd)\hookrightarrow(Y\oplus \Ac\otimes S^n,\zd^{(1)})$, whose differential $\zd^{(1)}$ is determined by
\be\nonumber \label{e:d3c}
\zd^{(1)}(1_n)=f(\p^{(1)}(1_n))\in Y_{n-1}\cap \zd^{-1}\{0\}\;,
\ee
and
\item[-] the ${\tt Mod}(\Ac)$-morphism $f\oplus \Id_{\Ac\otimes S^n}:(X\oplus \Ac\otimes S^n,\p^{(1)})\to (Y\oplus \Ac\otimes S^n,\zd^{(1)})\,$.\vspace{3mm}
\end{itemize}
This statement can be understood similarly to (but more easily than) its counterpart in the $\tt DG\Dc A$-case (replace Sullivan algebras and \cite[Lemma 1]{DPP2} by Sullivan modules and Lemma \ref{LemAmod}).\medskip

As above, since $f\oplus\Id_{\Ac\0 S^n}$ is a weak equivalence in ${\tt Mod}(\Ac)$ if it is a weak equivalence in $\tt DG\Dc M$, we continue working in the latter category. Notice also that the rows of the preceding diagram are $2$-filtrations of $X\oplus \Ac\otimes S^n$ and $Y\oplus \Ac\otimes S^n$, respectively, that $f\oplus \Id_{\Ac\otimes S^n}$ is compatible with these filtrations and that the corresponding natural transformation $\zvf_\ast$ is defined by the vertical arrows of the diagram. Since $f$ is a weak equivalence, and the induced $\tt DG\Dc M$-map $\zvf_{1,\sharp}$ between the 1-terms of the associated $2$-graduations is, as $\tt DG\Dc M$-map, the identity $\Id_{\Ac\otimes S^n}$, the map $f\oplus \Id_{\Ac\0 S^n}$ is a weak equivalence, thanks to Lemma \ref{L:filtrweq}.\medskip

From here to the end of this proof, we consider the two cases, $\tt DG\Dc A$ and ${\tt Mod}(\Ac)$, again simultaneously and denote both categories by $\tt C$. We have just shown that the pushout of any weak equivalence along the pushout of any generating cofibration is itself a weak equivalence. In the sequel, we denote the pushout of a generating cofibration, or, better, the corresponding relative Sullivan $\Dc$-algebra $(X,\p)\hookrightarrow(X\0\,\Sc S^n,\p^{(1)})$ or relative Sullivan $\Ac$-module $(X,\p)\hookrightarrow(X\oplus\Ac\0 S^n,\p^{(1)})$, by \be\label{Push}X^{(0,1)}:(X^{(0)},\p^{(0)})\hookrightarrow (X^{(1)},\p^{(1)})\quad\text{or even}\quad X^{(\zb,\zb+1)}:(X^{(\zb)},\p^{(\zb)})\hookrightarrow (X^{(\zb+1)},\p^{(\zb+1)})\;,\ee where $\zb$ is an ordinal.\medskip

{\bf Step 2}. In this second step, we finally show that the pushout of a weak equivalence $\zf^{(0)}:X^{(0)}\to Y^{(0)}$ in $\tt C$ along a $\tt C$-map in $I$-cell, i.e., along a transfinite composition of pushouts of maps in $I$, is again a weak equivalence. More precisely, such a composition is the colimit $$\op{colim}_{\zb<\zl}X^{(0,\zb)}:X^{(0)}\to \op{colim}_{\zb<\zl}X^{(\zb)}$$ of a colimit respecting functor $X^{(\ast)}:\zl\to\tt C$ ($\zl\in\mathbf{O}$), such that any map $X^{(\zb,\zb+1)}:X^{(\zb)}\hookrightarrow X^{(\zb+1)}$ ($\zb+1<\zl$) is the pushout of a map in $I$, i.e., is a Sullivan `object' of the type (\ref{Push}).\medskip

It might be helpful to notice that the considered transfinite composition is given, in the ${\tt Mod}(\Ac)$-case, by (\ref{TransCompPushGen}), and in the $\tt DG\Dc A$-case, by $$X^{(0)}\to X^{(0)}\0\,\Sc(\bigoplus_{\tiny\zb<\zl,\zb\in\mathbf{O}_s}S^{n{(\zb)}})\;,$$ see \cite[Proof of Theorem 4(i)]{DPP2}.

{\bf Step 2.a}. The idea is to first construct the following commutative diagram:
\begin{equation}
\begin{tikzcd}
X^{(0)} \ar{r}{X^{(0,1)}} \ar{d}{\zf^{(0)}}[swap]{\sim} & X^{(1)}\ar{d}{\zf^{(1)}}\ar{r}{X^{(1,2)}}&\ldots\ar[r]& X^{(\zb)}\ar{d}{\zf^{(\zb)}}\ar{r}{X^{(\zb,\zb+1)}}& X^{(\zb+1)}\ar{d}{\zf^{(\zb+1)}}\ar[r]&\ldots\; X^{(\zg)}\ar{d}{\zf^{(\zg)}}\ldots\\
Y^{(0)} \ar{r}{Y^{(0,1)}}& Y^{(1)}\ar{r}{Y^{(1,2)}} & \ldots \ar[r]&Y^{(\zb)}\ar{r}{Y^{(\zb,\zb+1)}}& Y^{(\zb+1)}\ar[r]&\ldots\; Y^{(\zg)}\ldots
\end{tikzcd}\label{fig PushoutFinal}\quad
\end{equation}
\begin{center}\text{Figure: Pushout along an $I$-cell}\end{center}\medskip
\noindent More precisely, for $\zg<\zl\,$, we will build, by transfinite induction,

\begin{itemize}
\item[-] a colimit respecting functor $Y^{(\ast)}_\zg:\zg+1\to\tt C$ with injective elementary maps $Y^{(\zb,\zb+1)}$ ($\zb+1<\zg+1$) and
\item[-] a natural transformation $\zf^{(\ast)}_\zg$ between $X^{(\ast)}_\zg$ and $Y^{(\ast)}_\zg$,
\end{itemize}
such that $\zf^{(\zg)}$ is a weak equivalence and $$Y^{(0)}\stackrel{Y^{(0,\zg)}}{\longrightarrow} Y^{(\zg)}\stackrel{\zf^{(\zg)}}{\longleftarrow} X^{(\zg)}$$ is the pushout of $$Y^{(0)}\stackrel{\zf^{(0)}}{\longleftarrow} X^{(0)}\stackrel{X^{(0,\zg)}}{\longrightarrow} X^{(\zg)}\;.$$ This construction is based on the assumption that $Y^{(\ast)}_\za$ and $\zf^{(\ast)}_\za$ have been constructed with the mentioned properties, for any $\za<\zg$.\medskip

The induction starts since the requirements concerning $Y^{(\ast)}_0$ and $\zf^{(\ast)}_0$ are obviously fulfilled and $\zf^{(0)}$ is a weak equivalence.\medskip

We first examine the case $\zg\in\mathbf{O}_s$. We can begin with the functor $Y^{(\ast)}_{\zg-1}$, the natural transformation $\zf^{(\ast)}_{\zg-1}$ and the square of the pushout $\zf^{(\zg-1)}$. Then we build the pushout $$Y^{(\zg-1)}\stackrel{Y^{(\zg-1,\zg)}}{\longrightarrow} Y^{(\zg)}\stackrel{\zf^{(\zg)}}{\longleftarrow} X^{(\zg)}\;$$ of $$Y^{(\zg-1)}\stackrel{\zf^{(\zg-1)}}{\longleftarrow}X^{(\zg-1)}\stackrel{X^{(\zg-1,\zg)}}{\longrightarrow} X^{(\zg)}$$ as in Step 1. It follows from the induction assumption and the description in Step 1 that there is a canonical functor $Y^{(\ast)}_\zg$ that has the required properties, as well as a canonical natural transformation $\zf^{(\ast)}_\zg$. Moreover, the map $\zf^{(\zg)}$ is a weak equivalence, and, since the outer square of two pushout squares is a pushout square, the map $\zf^{(\zg)}$ has the requested pushout property.\medskip

If $\zg=\op{colim}_{\zb<\zg}\zb\in\mathbf{O}_\ell$, note that, since colimits commute, the searched pushout $$\op{colim}(Y^{(0)}\longleftarrow X^{(0)}\longrightarrow X^{(\zg)})$$ of $\zf^{(0)}$ along $X^{(0,\zg)}:X^{(0)}\to X^{(\zg)}$, i.e., along $\op{colim}_{\zb<\zg}X^{(0,\zb)}:X^{(0)}\to\op{colim}_{\zb<\zg}X^{(\zb)}$, is equal to $$\op{colim}_{\zb<\zg}\op{colim}(Y^{(0)}\stackrel{\zf^{(0)}}{\longleftarrow} X^{(0)}\stackrel{X^{(0,\zb)}}{\longrightarrow} X^{(\zb)})=$$ \be\label{ColimPush}\op{colim}_{\zb<\zg}(Y^{(0)}\stackrel{Y^{(0,\zb)}}{\longrightarrow} Y^{(\zb)}\stackrel{\zf^{(\zb)}}{\longleftarrow} X^{(\zb)})\;.\ee Of course, the functors $Y^{(\ast)}_\za$ (resp., the natural transformations $\zf^{(\ast)}_\za$), $\za<\zg$, define a functor $Y^{(\ast)}:\zg\to\tt C$ with the same properties (resp., a natural transformation $\zf^{(\ast)}:X^{(\ast)}\to Y^{(\ast)}$). The functor $Y^{(\ast)}$ can be extended by $$Y^{(\zg)}:=\op{colim}_{\zb<\zg}Y^{(\zb)}\quad\text{and}\quad Y^{(\za,\zg)}:=\op{colim}_{\za\le\zb<\zg}Y^{(\za,\zb)}\;,$$ as colimit respecting functor $Y^{(\ast)}_\zg$ with injective elementary maps. Similarly, the natural transformation $\zf^{(\ast)}$ can be extended, via the application of the colimit functor, $$\zf^{(\zg)}:=\op{colim}_{\zb<\zg}\zf^{(\zb)}:X^{(\zg)}\to Y^{(\zg)}\;,$$ to a natural transformation $\zf^{(\ast)}_\zg$. Hence, the colimit (\ref{ColimPush}) is given by $$Y^{(0)}\stackrel{Y^{(0,\zg)}}{\longrightarrow} Y^{(\zg)}\stackrel{\zf^{(\zg)}}{\longleftarrow} X^{(\zg)}\;.$$ It now suffices to check that $\zf^{(\zg)}$ is a weak equivalence in $\tt DG\Dc M$. Since, as easily seen, $X^{(\ast)}$ (resp., $Y^{(\ast)}$) is a $\zg$-filtration of $X^{(\zg)}$ (resp., $Y^{(\zg)}$), since $\zf^{(\zg)}$ is filtration-compatible with associated natural transformation $\zf^{(\ast)}$, and since $\zf^{(\za)}$, $\za<\zg$, is a weak equivalence, it follows from Lemma \ref{L:limitord} that $\zf^{(\zg)}$ is a weak equivalence as well.\medskip

{\bf Step 2.b}. The pushout of $\zf^{(0)}$ along $$\op{colim}_{\zg<\zl}X^{(0,\zg)}:X^{(0)}\to \op{colim}_{\zg<\zl}X^{(\zg)}$$ is given by Equation (\ref{ColimPush}) with $\zg$ replaced by $\zl$ (and $\zb$ by $\zg$). It is straightforwardly checked that $Y^{(\ast)}$ (resp., $\zf^{(\ast)}$) is a functor defined on $\zl$ (resp., a natural transformation between such functors). Hence, the colimit (\ref{ColimPush}) is given by $$Y^{(0)}\stackrel{\op{colim}_{\zg<\zl}Y^{(0,\zg)}}{\longrightarrow}\op{colim}_{\zg<\zl}Y^{(\zg)}\stackrel{\op{colim}_{\zg<\zl}\zf^{(\zg)}}{\longrightarrow}\op{colim}_{\zg<\zl}X^{(\zg)}\;.$$ Since $X^{(\ast)},Y^{(\ast)}$ are $\zl$-filtrations of $\op{colim}_{\zg<\zl}X^{(\zg)}$ and $\op{colim}_{\zg<\zl}Y^{(\zg)}$, respectively, and the considered pushout $\op{colim}_{\zg<\zl}\zf^{(\zg)}$ of $\zf^{(0)}$ is filtration-compatible, it follows from Lemma \ref{L:limitord} that this pushout is a weak equivalence.\medskip

To complete the proof of Theorem \ref{T:leftproper}, it remains to show that the following lemma, which we state separately for future reference, holds.

\begin{lem}\label{L:filtrtech}
Diagram \eqref{e:filtrarrcat} satisfies the assumptions of Lemma \ref{L:filtrweq}.
\end{lem}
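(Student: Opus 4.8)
The plan is to verify directly that the diagram \eqref{e:filtrarrcat} meets the two hypotheses of Lemma \ref{L:filtrweq}: that the base map $\zvf_0$ is a weak equivalence, and that for every $k+1<\zw$ the induced map $\zvf_{k+1,\sharp}$ on the associated $\zw$-graduation is a weak equivalence. The base case is immediate: since $\Sc^{\le 0}S^n=\Oc$, we have $\op{F}_0(X\otimes\Sc S^n)=X$ and $\op{F}_0(Y\otimes\Sc S^n)=Y$, so $\zvf_0=f\otimes\Id_\Oc=f$, which is a weak equivalence by hypothesis.

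First I would pin down the graded terms and, crucially, the differential they carry. By definition $\op{Gr}_{k+1}(X\otimes\Sc S^n)=X_{k+1}/X_k=X\otimes\Sc^{k+1}S^n$, and likewise for $Y$. The key computation is that the induced differential is the plain tensor differential $\p\otimes\Id$. Indeed, writing $\p^{(1)}$ via the Leibniz rule on a monomial $x\otimes\zs$ with $\zs$ of symmetric word-length $k+1$ in the generator $1_n$, one obtains, besides the term $\p(x)\otimes\zs$, only cross-terms in which a single factor $1_n$ is replaced by $\p^{(1)}(1_n)\in X$; each such term drops the symmetric word-length by one and hence lies in $\op{F}_k=X_k$ (this uses precisely the filtration-stabilizing property of $\p^{(1)}$ recorded in Step 1 of the present proof). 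Therefore, modulo $X_k$, the map $\p^{(1)}$ reduces to $\p\otimes\Id_{\Sc^{k+1}S^n}$, so that $\op{Gr}_{k+1}(X\otimes\Sc S^n)=X\otimes_\bullet\Sc^{k+1}S^n$ with $\Sc^{k+1}S^n$ concentrated in degree $(k+1)n$ and carrying the zero differential, and the induced map is exactly $\zvf_{k+1,\sharp}=f\otimes\Id_{\Sc^{k+1}S^n}$.

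Next I would show that $\Sc^{k+1}S^n$ is a cofibrant object of $\tt DG\Dc M$, i.e.\ degree-wise $\Dc$-projective. In its single nonzero degree it is the graded-symmetric power $\Sc^{k+1}_\Oc\Dc$, which in characteristic zero is a $\Dc$-module direct summand of $\Dc^{\otimes_\Oc(k+1)}$ via the (anti)symmetrizer idempotent $\tfrac{1}{(k+1)!}\sum_\zs(\pm)\zs$; here the permutation action is by $\Dc$-linear maps because the braiding of $\tt DG\Dc M$ is a $\Dc$-module morphism. Iterating the closed-structure identity \eqref{Closed} as in Step 1 of the proof of {\small MMC1} gives $\Hom_\Dc(\Dc^{\otimes_\Oc(k+1)},-)\simeq\Hom_\Oc(\Dc^{\otimes_\Oc k},-)$, which is exact since $\Dc^{\otimes_\Oc k}$ is $\Oc$-projective (a tensor power over $\Oc$ of the $\Oc$-projective module $\Dc$, Proposition \ref{Projectivity}). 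Hence $\Dc^{\otimes_\Oc(k+1)}$, and with it its direct summand $\Sc^{k+1}_\Oc\Dc$, is $\Dc$-projective.

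With $f$ a weak equivalence and $\Sc^{k+1}S^n$ cofibrant, Lemma \ref{WeqCof} — already established in \ref{tocum} and logically independent of the present lemma — shows that $\zvf_{k+1,\sharp}=f\otimes\Id_{\Sc^{k+1}S^n}$ is a weak equivalence for every $k+1<\zw$. Both hypotheses of Lemma \ref{L:filtrweq} are thus satisfied. The main obstacle is the computation in the second paragraph, namely confirming that all Leibniz cross-terms genuinely land in the preceding filtration stage so that the induced differential on the graduation is the plain tensor differential; once this is in hand, the cofibrancy of $\Sc^{k+1}S^n$ and the appeal to Lemma \ref{WeqCof} are routine. (Alternatively, one may bypass Lemma \ref{WeqCof} and argue directly that $\op{Mc}(f\otimes\Id_{\Sc^{k+1}S^n})\simeq\op{Mc}(f)\otimes\Sc^{k+1}S^n$ is acyclic by K\"unneth's formula, since $\op{Mc}(f)$ is acyclic and $\Sc^{k+1}S^n$ is $\Oc$-flat.)
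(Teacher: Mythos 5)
Your proof is correct, and its first two steps -- identifying $\zvf_0$ with $f$ and the graded pieces $\op{Gr}_{k+1}$ with $X\0\,\Sc^{k+1}S^n$ carrying the plain tensor differential, so that $\zvf_{k+1,\sharp}=f\0\Id_{\Sc^{k+1}S^n}$ -- coincide with the paper's. You diverge at the last step. The paper never proves that $\Sc^{k+1}S^n$ is $\Dc$-projective: it uses the characteristic-zero splitting of $0\to\ker^k\zs\to\bigotimes^kS^n\to\Sc^kS^n\to 0$ only at the level of homology, so that $H(\op{Mc}(f)[-kn]\0\,\Sc^kS^n)$ is a direct summand of $H(\op{Mc}(f)[-kn]\0\bigotimes^kS^n)$, and the latter vanishes by K\"unneth and the $\Oc$-flatness of $\Dc$. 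You instead push the same idempotent splitting down to the level of modules, concluding that $\Sc^{k+1}_\Oc\Dc$ is a $\Dc$-module direct summand of $\Dc^{\otimes_\Oc(k+1)}$, which is $\Dc$-projective by iterating \eqref{Closed} to get $\Hom_\Dc(\Dc^{\otimes_\Oc(k+1)},-)\simeq\Hom_\Oc(\Dc^{\otimes_\Oc k},-)$; this makes $\Sc^{k+1}S^n$ cofibrant and lets you quote Lemma \ref{WeqCof}. That appeal is legitimate and non-circular, since the proof of Lemma \ref{WeqCof} in \ref{tocum} relies only on Lemmas \ref{L:limitord} and \ref{L:filtrweq} and on the description of cofibrant objects, not on the present lemma. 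What your route buys is the stronger and reusable intermediate fact that the graded symmetric powers of $\Dc$ over $\Oc$ are $\Dc$-projective (so $\Sc^{k+1}S^n$ is cofibrant), at the cost of the extra projectivity argument; what the paper's route buys is that it only needs the splitting up to homology and the already-established $\Oc$-flatness of $\Dc$. Your parenthetical alternative -- K\"unneth applied directly to $\op{Mc}(f)[-(k+1)n]\0\,\Sc^{k+1}S^n$ using the $\Oc$-flatness of the summand -- is essentially the paper's argument with the splitting moved from homology to the module level and the (in any case inessential) induction removed.
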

\proof For $X\0\,\Sc S^n$, the term $X^{k+1}:=X_{k+1}/X_k$ ($k+1<\zw$) of the $\zw$-graduation, associated to the $\zw$-filtration with filters $X_\ell=X\0\,\Sc^{\le \ell} S^n$ ($\ell<\zw$), is isomorphic as $\tt DG\Dc M$-object to $$X^{k+1}\simeq X\0\,\Sc^{k+1}S^n\;,$$ where the {\small RHS} is endowed with the usual tensor product differential. A similar statement holds for $Y\0\,\Sc S^n$. Moreover, when read through the preceding isomorphisms, say $\Ic_X$ and $\Ic_Y$, the $\tt DG\Dc M$-map $\zvf_{k+1,\sharp}:X^{k+1}\to Y^{k+1}$ induced by $\zvf_{k+1}=f\0\Id_{\Sc^{\le k+1}S^n}$, is the $\tt DG\Dc M$-map $$\Ic_Y\,\zvf_{k+1,\sharp}\,\Ic_X^{-1}=f\otimes \Id_{\Sc^{k+1} S^n}\;.$$ Since $\zvf_0=f$ is a weak equivalence, it remains to show that $f\otimes \Id_{\Sc^{k+1} S^n}$ is a weak equivalence, for all $k+1<\zw$, or, still, that $f\otimes \Id_{\Sc^{k} S^n}$ is a weak equivalence, for all $1\le k<\zw$.\medskip

Just as in Equation (\ref{MapCone}), we have here $$\op{Mc}(f\otimes \Id_{\Sc^kS^n})\simeq \op{Mc}(f)[-kn]\otimes \Sc^kS^n\;,$$
as $\tt DG\Dc M$-object, since $\Sc^kS^n$ has zero differential. We now proceed as in \cite[Sections 7.5 and 8.7]{DPP}: The symmetrisation map $\zs$ induces a short exact sequence
$$
0\rightarrow\ker^k\zs\xrightarrow{i} \bigotimes^k{S^n}\xrightarrow{\zs} {\Sc^{k}S^n}\rightarrow 0
$$
in the Abelian category $\tt DG\Oc M$. Since this sequence canonically splits, we get the $\tt G\Oc M$-isomorphism $$H(\op{Mc}(f)[-kn]\otimes\bigotimes^k{S^n})\simeq H(\op{Mc}(f)[-kn]\otimes\ker^k\zs)\;\oplus\;H(\op{Mc}(f)[-kn]\otimes{\Sc^{k}S^n})\;.$$ To prove the weak equivalence condition, it suffices to show that the {\small LHS}-homology vanishes. Assume that the claim is proven for $0\le k-1<\zw$. The induction starts since $f$ is a weak equivalence, i.e., a quasi-isomorphism. The fact that $$H(\op{Mc}(f)[-kn]\otimes\bigotimes^{k-1}{S^n}\otimes S^n)=0$$ is then a consequence of the K\"unneth formula for complexes and the previously mentioned fact that $\Dc$ is $\Oc$-flat. \endproof

\subsection{HAC condition 2: combinatoriality}\label{SS:combinatoriality}

All the requirements of the second axiom HAC2 \cite[Assumption 1.1.0.2]{TV08} of a Homotopical Algebraic Context have been established above, except the combinatoriality condition for the model structure of ${\tt Mod}(\Ac)$. For future reference, we will also prove the combinatoriality of the model structures of $\tt DG\Dc M$ and of $\tt DG\Dc A$. A reader, who is interested in set-theoretical size issues and {\bf universes}, finds all relevant information in Appendix \ref{AppendixUniverses}. \medskip

Roughly, a combinatorial model category is a well manageable type of model category, in the sense that it is generated from small ingredients: it is a category
\begin{itemize}
\item[-] in which any object is the colimit of small objects from a given set of generators, and

\item[-] which carries a cofibrantly generated model structure, i.e., a model structure whose cofibrations (resp., trivial cofibrations) are generated by sets $I$ (resp., $J$) of generating morphisms whose sources are small.
\end{itemize}\medskip

More precisely,

\begin{Defi}\label{D:cmc}
A {\bf combinatorial model category}, is a locally presentable category that is endowed with a cofibrantly generated model structure.
\end{Defi}

For {\bf locally presentable categories}, i.e., categories that are locally $\zk$-presentable for some regular cardinal $\zk$, we refer to Appendix \ref{LPC}. Aspects of the foundational background of and further details on combinatorial model categories are available in \cite{Du01, AR}. Eventually, a category that satisfies all the conditions of a locally presentable category, except that it is not necessarily cocomplete, is referred to as an {\bf accessible category}.\medskip

Our categories of interest, $\tt DG\Dc M$, $\tt DG\Dc A$, and ${\tt Mod}(\Ac)$, are cofibrantly generated model categories. In particular, they are (complete and) cocomplete, so that, to prove their combinatoriality, it suffices to prove their accessibility. As for $\tt DG\Dc M$, we mentioned in the proof of Proposition \ref{L:Talg} that it is locally presentable, hence, accessible. Regarding the accessibility of ${\tt DG\Dc A}\simeq {\tt DG\Dc M}^T$ (see Proposition \ref{L:Talg}) and ${\tt Mod}(\Ac)\simeq {\tt DG\Dc M}^U$ (see Proposition \ref{L:Ualg}), we recall \cite[2.78]{AR} that a category of algebras over a monad is accessible, if the monad is. Furthermore \cite[2.16]{AR}, a monad $(V,\mu,\eta)$ over a category ${\tt C}$ is accessible, if its endofunctor $V:\tt C\to\tt C$ is accessible. Finally, a functor $G:{\tt C}'\to {\tt C}''$ is called accessible, if it is accessible for some regular cardinal $\zk$, i.e., if ${\tt C}'$ and ${\tt C}''$ are $\zk$-accessible categories, and if $G$ preserves $\zk$-directed colimits. Summarizing, to prove that $\tt DG\Dc A$ and ${\tt Mod}(\Ac)$ are accessible, we only need to show that both, $T=F\Sc$ and $U=\zF\,\zS$, preserve $\zk$-directed colimits. In fact, since the left adjoints $\Sc$ and $\zS$ respect all colimits, it suffices to reassess the right adjoints $F:{\tt DG\Dc A}\to {\tt DG\Dc M}$ and $\zF:{\tt Mod}(\Ac)\to{\tt DG\Dc M}$. However, in \cite{DPP}, we showed that $F$ commutes with directed colimits (and $\zk$-directed ones), and the proof for $\zF$ is similar. Hence,

\begin{prop}\label{ProperCombinatorial}
The (proper) model categories $\tt DG\Dc M$, $\tt DG\Dc A$, and ${\tt Mod}(\Ac)$ are combinatorial model categories.
\end{prop}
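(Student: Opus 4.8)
The plan is to reduce the combinatoriality of all three categories to a single statement, accessibility, using that each is already known to be a cofibrantly generated model category. Recall from Definition \ref{D:cmc} that a combinatorial model category is a locally presentable category carrying a cofibrantly generated model structure, and from the discussion above that a locally presentable category is precisely an accessible category that is cocomplete. Since $\tt DG\Dc M$, $\tt DG\Dc A$, and ${\tt Mod}(\Ac)$ already carry cofibrantly generated model structures (\cite{DPP} for the first two, Theorem \ref{P:AmodCGMMC} for the third) and are complete and cocomplete, the entire burden of the proof is to check that each of the three is accessible; cocompleteness then upgrades accessibility to local presentability, and the cofibrantly generated model structures finish the job.

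For $\tt DG\Dc M$ there is nothing new to do: it is locally presentable, as already recorded in the proof of Proposition \ref{L:Talg} (directly, or via \cite{Rosicky}), hence accessible. For $\tt DG\Dc A$ and ${\tt Mod}(\Ac)$ I would exploit their descriptions as Eilenberg--Moore categories over monads on $\tt DG\Dc M$: Proposition \ref{L:Talg} gives ${\tt DG\Dc A}\simeq{\tt DG\Dc M}^T$ with $T=\op{F}\Sc$, and Proposition \ref{L:Ualg} gives ${\tt Mod}(\Ac)\simeq{\tt DG\Dc M}^U$ with $U=\zF\,\zS$. The structural engine is the chain of implications: a category of algebras over an accessible monad on an accessible category is accessible \cite[2.78]{AR}; a monad is accessible as soon as its endofunctor is \cite[2.16]{AR}; and an endofunctor is accessible once it preserves $\zk$-directed colimits for some regular cardinal $\zk$. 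Because the left adjoints $\Sc$ and $\zS$ preserve all colimits, the accessibility of $T=\op{F}\Sc$ and of $U=\zF\,\zS$ collapses to the accessibility of the forgetful right adjoints $\op{F}:{\tt DG\Dc A}\to{\tt DG\Dc M}$ and $\zF:{\tt Mod}(\Ac)\to{\tt DG\Dc M}$.

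The real work, and the step I expect to be the main obstacle, is this last reduction: showing that $\op{F}$ and $\zF$ preserve $\zk$-directed colimits. For $\op{F}$ this is the assertion that a directed colimit of differential graded $\Dc$-algebras is computed on underlying complexes, which was established in \cite{DPP}; the point to verify is that the commutative-monoid structure passes to the colimit, which rests on the fact that $c\0\bullet$ respects directed colimits (itself a consequence of closedness, already used in Proposition \ref{L:Talg}). For $\zF$ I would run the entirely parallel argument: the $\Ac$-action map is built from the tensor product, which commutes with directed colimits, so the underlying-complex colimit carries a unique compatible $\Ac$-module structure and $\zF$ preserves it. Once $\op{F}$ and $\zF$ are seen to be accessible, the cited results of \cite{AR} yield accessibility of $T$ and $U$, hence of ${\tt DG\Dc M}^T\simeq{\tt DG\Dc A}$ and ${\tt DG\Dc M}^U\simeq{\tt Mod}(\Ac)$; together with cocompleteness this gives local presentability, and with the cofibrantly generated model structures already in hand, combinatoriality of all three categories follows.
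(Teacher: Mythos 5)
Your proposal is correct and follows essentially the same route as the paper: reduce combinatoriality to accessibility using cocompleteness and the cofibrantly generated model structures, invoke local presentability of $\tt DG\Dc M$, and for the other two categories use the Eilenberg--Moore descriptions together with \cite[2.78]{AR} and \cite[2.16]{AR} to reduce everything to the forgetful functors $\op{F}$ and $\zF$ preserving $\zk$-directed colimits, which is handled via \cite{DPP} and the parallel argument for $\zF$. The extra detail you supply on why the monoid (resp.\ module) structure descends to the colimit is a correct elaboration of the step the paper delegates to \cite{DPP}, not a different argument.
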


\subsection{HAC condition 3: cofibrancy and equivalence-invariance}\label{SS:cofflat}

As above, we choose $\Ac\in\tt DG\Dc A$. The condition HAC3 \cite[Assumption 1.1.0.3]{TV08} asks that, for any cofibrant $M\in{\tt Mod}(\Ac)$, the functor $$-\0_\Ac M:{\tt Mod}(\Ac)\to{\tt Mod}(\Ac)$$ respect weak equivalences. The requirement is not really surprising. Indeed, to avoid `equivalence-invariance breaking' in the model category ${\tt Mod}(\Ac)$ via tensoring by $M$, this operation should preserve weak equivalences -- at least for `good' objects $M$, i.e., for cofibrant ones. This is similar to tensoring, in the category ${\tt Mod}(R)$ of modules over a ring $R$, by an $R$-module $M$, what is an operation that respects injections for `good' objects $M$, i.e., for flat $R$-modules.

\begin{prop}\label{P:flatness}
Let $\Ac\in\tt DG\Dc A$. For every cofibrant $M\in \tt Mod(\Ac)$, the functor
$$
-\otimes_\Ac M:{\tt Mod(\Ac)}\rightarrow{\tt Mod(\Ac)}
$$
preserves weak equivalences.
\end{prop}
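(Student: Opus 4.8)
The plan is to reduce to the case where $M$ is a Sullivan $\Ac$-module and then to tensor the defining transfinite filtration of such a module against a given weak equivalence, invoking the filtration criterion of Lemma \ref{L:filtrweq}. So I would start from a weak equivalence $g:X\to Y$ in ${\tt Mod}(\Ac)$, which by Theorem \ref{P:AmodCGMMC} means precisely that the underlying $\tt DG\Dc M$-morphism $\zF(g)$ is a weak equivalence. By Theorem \ref{L:cofobjsinAMod}, the cofibrant module $M$ is a retract of a Sullivan $\Ac$-module $N=(\Ac\0_\bullet V,d)$. Since $-\0_\Ac M$ and $-\0_\Ac N$ are functors and tensoring respects the retraction maps, $g\0_\Ac\Id_M$ is a retract of $g\0_\Ac\Id_N$ in the arrow category; as weak equivalences are stable under retracts, it suffices to treat the case $M=N$.

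Next I would set up the filtration. Writing $(m_\za)_{\za<\zl}$ for the homogeneous basis of $V$ and $V_{<\za}=\bigoplus_{\zb<\za}\Dc\cdot m_\zb$, the lowering property of $d$ makes each $\Ac\0_\bullet V_{<\za}$ an $\Ac$-submodule of $N$, so that $\op{F}_\za N:=\Ac\0_\bullet V_{<\za}$ defines a $\zl$-filtration with $\op{F}_0 N=0$ and $N=\op{colim}_{\za<\zl}\op{F}_\za N$. Applying the colimit-preserving functor $X\0_\Ac-$ and using the isomorphism of Lemma \ref{SimplTensA} to identify $X\0_\Ac(\Ac\0_\bullet V_{<\za})\simeq X\0_\bullet V_{<\za}$ (carrying the transferred differential), I would obtain a $\zl$-filtration $\op{F}_\za(X\0_\Ac N)=X\0_\bullet V_{<\za}$; the inclusions are monomorphisms, since each $V_{<\za}\hookrightarrow V_{<\zb}$ is a split monomorphism of graded $\Dc$-modules and remains so after tensoring. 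The analogous filtration is built for $Y$, and $g\0_\Ac\Id_N$ is compatible with the two, its associated natural transformation being $\zvf_\za=\zF(g)\0_\bullet\Id_{V_{<\za}}$ by naturality of the isomorphism of Lemma \ref{SimplTensA}.

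The heart of the argument is the associated graduation. For a successor $\zg+1<\zl$ one has $V_{<\zg+1}=V_{<\zg}\oplus\Dc\cdot m_\zg$, whence $\op{Gr}_{\zg+1}(X\0_\Ac N)\simeq X\0_\bullet(\Dc\cdot m_\zg)$; and since $d(m_\zg)\in\Ac\0_\bullet V_{<\zg}=\op{F}_\zg N$, the lowering part of the differential vanishes in the quotient, leaving the standard tensor product differential on $X\0_\bullet(\Dc\cdot m_\zg)$, where $\Dc\cdot m_\zg$ is the free rank-one $\Dc$-module concentrated in a single degree with zero differential. Under these identifications the induced map $\zvf_{\zg+1,\sharp}$ is exactly $\zF(g)\0_\bullet\Id_{\Dc\cdot m_\zg}$. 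As $\Dc\cdot m_\zg$ is degree-wise $\Dc$-projective, hence a cofibrant object of $\tt DG\Dc M$, Lemma \ref{WeqCof} shows that $\zvf_{\zg+1,\sharp}$ is a weak equivalence; together with $\zvf_0:0\to 0$, which is trivially a weak equivalence, the hypotheses of Lemma \ref{L:filtrweq} are met, so that $g\0_\Ac\Id_N$, and therefore $g\0_\Ac\Id_M$, is a weak equivalence.

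I expect the main obstacle to lie precisely in the passage to the graduation: one must transport the relative tensor $\0_\Ac$ into the internal tensor $\0_\bullet$ through Lemma \ref{SimplTensA} with due care for the transferred differential, confirm that the tensored filtration genuinely consists of monomorphisms, and verify that the lowering part of the Sullivan differential dies on each graded quotient so that the pieces become free $\tt DG\Dc M$-objects of the form $X\0_\bullet(\Dc\cdot m_\zg)$ -- exactly the input required for Lemma \ref{WeqCof}.
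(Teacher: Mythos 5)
Your proposal is correct and follows essentially the same route as the paper: reduction to a Sullivan $\Ac$-module via the retract argument of Theorem \ref{L:cofobjsinAMod}, transport of the $\zl$-filtration $\Ac\0_\bullet V_{<\za}$ through the identification of Lemma \ref{SimplTensA}, recognition of the graded quotients as $X\0_\bullet(\Dc\cdot m_\zg)$ with the standard tensor differential, and conclusion via Lemma \ref{L:filtrweq}. The only (harmless) divergence is the final step, where you invoke Lemma \ref{WeqCof} for the cofibrant sphere $\Dc\cdot m_\zg$ rather than redoing the paper's explicit mapping-cone/K\"unneth computation; since Lemma \ref{WeqCof} is established earlier and independently of Proposition \ref{P:flatness}, this is not circular and is a legitimate shortcut.
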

\proof
We have to prove that, if $f:P\to Q$ is a weak equivalence in $\tt Mod(\Ac)$, then $f\0_\Ac\Id_M$ is a weak equivalence as well.\medskip

By Lemma \ref{L:cofobjsinAMod}, the module $M$ is a retract of a Sullivan $\Ac$-module, i.e., there exist a Sullivan $\Ac$-module $\Ac\otimes V$ and $\tt Mod(\Ac)$-morphisms $i:M\to \Ac\otimes V$ and $j:\Ac\otimes V\to M$ such that
$j\circ i=\Id_M$. Since the diagram

\begin{equation}\label{e:sretri}
\begin{tikzcd}[column sep=large]
P\otimes_\Ac M \ar{r}{\Id_P\otimes_\Ac i} \ar{d}{f\otimes_\Ac \Id_M}&P\otimes_\Ac(\Ac\otimes V) \ar{r}{\Id_P\otimes_\Ac j} \ar{d}{f\otimes_\Ac \Id_{\Ac\otimes V}}&P\otimes_\Ac M\ar{d}{f\otimes_\Ac \Id_M}\\
Q\otimes_\Ac M\ar{r}{\Id_Q\otimes_\Ac i}  &Q\otimes_\Ac(\Ac\otimes V) \ar{r}{\Id_Q\otimes_\Ac j}        &Q\otimes_\Ac M
\end{tikzcd}
\end{equation}
is a retract diagram in $\tt Mod(\Ac)$, it suffices to show that $f\otimes_\Ac \Id_{\Ac\otimes V}$ is a weak equivalence.\medskip

The proof of Lemma \ref{L:cofobjsinAMod} shows that $\Ac\0 V$ is the colimit of the $\zl$-sequence $X:\zl\to \tt Mod(\Ac)$ defined by $$X_\zb=\Ac\0\bigoplus_{\za\le\zb,\za\in\mathbf{O}_s}S^{n(\za)}\;,$$ where $S^{n(\za)}$ is the sphere $\Dc\cdot 1_{n(\za)}$, $n(\za)\in\N$, see Equation {\ref{TransCompPushGen}. If we shift the index $\za$ of the generators $1_{n(\za)}$ by $-1$, we get $$X_\zb=\Ac\0 V_{<\zb}\,,\quad\text{ where }V_{<\zb}=\bigoplus_{\za-1<\zb}\Dc\cdot 1_{n(\za-1)}\;.$$ As the morphisms $X_{\zb,\zb+1}$ are the canonical injections, the $\Ac$-module $\Ac\otimes V$ is equipped with the $\zl$-filtration $\op{F}_\zb(\Ac\0 V)=X_\zb$, i.e., with the $\zl$-filtration
$$
0\hookrightarrow \Ac\otimes V_{<1}\hookrightarrow\ldots\hookrightarrow \Ac\otimes V_{<\zb}\hookrightarrow\ldots
$$

Since $\tt Mod(\Ac)$ is a closed monoidal category, the tensor product $P\otimes_\Ac-$ is a left adjoint functor and thus preserves colimits. Hence,
$$
0\hookrightarrow P\otimes_\Ac(\Ac\otimes V_{<1})\hookrightarrow\ldots\hookrightarrow P\otimes_\Ac (\Ac\otimes V_{<\zb})\hookrightarrow\ldots
$$
is a $\zl$-filtration of the $\Ac$-module $P\otimes_\Ac(\Ac\otimes V)$, or, in view of Lemma \ref{SimplTensA},
$$
0\hookrightarrow P\otimes V_{<1}\hookrightarrow\ldots\hookrightarrow P\otimes V_{<\zb}\hookrightarrow\ldots
$$
is a $\zl$-filtration of the $\Ac$-module $P\0 V$. If we tensor by $Q$ instead of $P$, we get an analogous $\zl$-filtration for $Q\otimes V$. Moreover, one easily checks (see Equation \ref{i}) that $$\imath\circ(f\0_\Ac\Id_{\Ac\0 V})\circ\imath^{-1}=f\0\Id_V\;,$$ so that the used identifications imply that the $\tt Mod(\Ac)$-morphism $f\0_\Ac\Id_{\Ac\0 V}$ coincides with the $\tt Mod(\Ac)$-morphism $f\0\Id_V\,.$ Since the weak equivalences in $\tt Mod(\Ac)$ are those $\tt Mod(\Ac)$-morphisms that are weak equivalences in $\tt DG\Dc M$, it actually suffices to show that $f\otimes\Id_V:P\otimes V\to Q\otimes V$ is a weak equivalence in $\tt DG\Dc M$.\medskip

We already mentioned (see paragraph above Proposition \ref{ProperCombinatorial}) that the forgetful functor $\zF:\tt Mod(\Ac)\to DG\Dc M$ respects directed colimits (alternatively we may argue that $\zF$ preserves filtered colimits as right adjoint between two accessible categories). Thus, the $\zl$-filtrations of $P\0 V$ and $Q\0 V$ in $\tt Mod(\Ac)$ are also $\zl$-filtrations in $\tt DG\Dc M$. Let now $\zvf$ be the natural transformation between the $\tt DG\Dc M$-filtration functors $\op{F}_\zb(P\0 V)=P\0 V_{<\zb}$ and $\op{F}_\zb(Q\0 V)=Q\0 V_{<\zb}$, defined by $\zvf_\zb=f\0\Id_{V_{<\zb}}$:
\begin{center}
\begin{equation}\label{dia}
\begin{tikzcd}
0 \ar[hook,r] \ar{d}{\zvf_0=0}&P\otimes V_{<1}\ar{d}{\zvf_1= f\otimes\Id_{ V_{<1}}}\ar[hook,r]&\ldots\ar[hook,r]&P\otimes  V_{<\zb},\ar[hook,r]\ar{d} {\zvf_\zb=f\otimes\Id_{ V_{<\zb}}}&\ldots\\
0 \ar[hook,r]&Q\otimes V_{<1}\ar[hook,r]&\ldots\ar[hook,r]&Q\otimes  V_{<\zb}\ar[hook,r]&\ldots\\
\end{tikzcd}\quad.
\end{equation}
\end{center}
The colimit $\op{colim}_{\zb <\lambda}\zvf_\zb$ is given by $f\0\Id_V$, so that the $\tt DG\Dc M$-morphism $f\0\Id_V$ is compatible with the considered $\tt DG\Dc M$-filtrations.\medskip

In order to apply Lemma \ref{L:filtrweq}, note that $\zvf_0$ is a weak equivalence and look at $$\zvf_{\zb+1,\sharp}:\op{Gr}_{\zb+1}(P\0 V)\to\op{Gr}_{\zb+1}(Q\0 V)\;,$$ where $$\op{Gr}_{\zb+1}(P\0 V)=P\0 V_{<\zb+1}/P\0 V_{<\zb}\;.$$ Observe first that, we have a $\tt G\Dc M$-isomorphism $\jmath:\op{Gr}_{\zb+1}(P\0 V)\to P\0 S^{n(\zb+1)}$ and denote by $\zd$ the pushforward $$\zd=\jmath\circ \p_{\zb+1,\sharp}^{P\0 V}\circ \jmath^{-1}$$ of the differential of $\op{Gr}_{\zb+1}(P\0 V)\,$. As, in view of Equation (\ref{diff}), the differential $\p_{\zb+1}^{P\0 V}$ of $\op{F}_{\zb+1}(P\0 V)=P\0 V_{<\zb+1}$ is the pushforward $$\imath\circ (d_P\0\Id_{\0}+\Id_P\0\,d)\circ \imath^{-1}\;$$ of the differential of $P\0_\Ac(\Ac\0 V_{<\zb+1})$, and as $d$ is the lowering differential of the Sullivan $\Ac$-module $\Ac\0 V$ (i.e., $d\,1_{n(\zb)}\in\Ac\0 V_{<\zb}$), we get, for any argument in $P\0 S^{n(\zb+1)}$, $$\zd(p\0 \zD\cdot 1_{n(\zb)})=\jmath[\p_{\zb+1}^{P\0 V}(p\0 \zD\cdot 1_{n(\zb)})]=\jmath[d_P\,p\0\zD\cdot 1_{n(\zb)}+(-1)^{|p|}\imath(p\0 \zD\cdot d\,1_{n(\zb)})]=$$ $$d_P\,p\0\zD\cdot 1_{n(\zb)}=(d_P\0\Id_{S^{n(\zb+1)}})(p\0 \zD\cdot 1_{n(\zb)})=:d_\0(p\0 \zD\cdot 1_{n(\zb)})\;,$$ where $d_\0$ the natural differential on $P\0 S^{n(\zb+1)}$. It follows that $$d_\0=\jmath\circ \p_{\zb+1,\sharp}^{P\0 V}\circ \jmath^{-1}\;,$$ so that $\jmath$ is a $\tt DG\Dc M$-isomorphism and that we can identify $(\op{Gr}_{\zb+1}(P\0 V),\p_{\zb+1,\sharp}^{P\0 V})$ and $(P\0 S^{n(\zb+1)},d_\0)$ as differential graded $\Dc$-modules (and similarly for $P$ replaced by $Q$). It is now easily checked that, when read through these isomorphisms, the morphism $\zvf_{\zb+1,\sharp}$ is the $\tt DG\Dc M$-morphism $$f\0\Id_{S^{n(\zb+1)}}:P\0 S^{n(\zb+1)}\to Q\0 S^{n(\zb+1)}\;.$$ In view of Lemma \ref{L:filtrweq}, it finally suffices to prove that $f\0\Id_{S^{n(\zb+1)}}$ is a weak equivalence.\medskip

Via the by now standard argument, we get $$\op{Mc}(f\otimes \Id_{S^{n(\zb+1)}})\simeq (\op{Mc} f)[-n(\zb+1)] \otimes {S^{n(\zb+1)}}\;.$$ Since $f$ is a weak equivalence by assumption, K\"unneth's formula gives
$$
H_\bullet(\op{Mc}(f\otimes \Id_{S^{n(\zb+1)}}))\simeq H_\bullet((\op{Mc}f)[-n(\zb+1)]\otimes S^{n(\zb+1)})\simeq H_{\bullet-n(\zb+1)}(\op{Mc}f)\otimes S^{n(\zb+1)}=0\;,
$$
so that $f\otimes \Id_{S^{n(\zb+1)}}$ is a weak equivalence. This completes the proof.
\endproof

\subsection{HAC condition 4: base change and equivalence-invariance}\label{SS:algandbasech}

In this section, we investigate the condition HAC4 \cite[Assumption 1.1.0.4]{TV08}.\medskip

It actually deals with the categories $\tt CMon(\tt Mod_{\tt DG\Dc M}(\Ac))$ and $\tt NuCMon(\tt Mod_{\tt DG\Dc M}(\Ac))$ of unital and non-unital commutative monoids in ${\tt Mod}(\Ac)$. In \cite{TV08}, the HAC conditions are formulated over an underlying category, which is not necessarily $\tt DG\Dc M$, but any combinatorial symmetric monoidal model category $\tt C$. The category of non-unital monoids appears in Assumption 1.1.0.4, only since $\tt C$ is not necessarily additive \cite[Remark 1.1.0.5]{TV08}. In our present case, the category $\tt C=DG\Dc M$ is Abelian and thus additive, so that the condition on non-unital algebras is redundant here.\medskip

Just as there is an adjunction $\Sc:{\tt DG\Dc M}\rightleftarrows {\tt CMon(DG\Dc M)}:F$, see Subsection \ref{DGDAs}, we have an adjunction $$\Sc_\Ac:{\tt Mod}(\Ac)\rightleftarrows{\tt CMon(\tt Mod(\Ac))}:F_\Ac\;,$$ which is defined exactly as $\Sc\dashv F$, except that the tensor product is not over $\Oc$ but over $\Ac$. Hence, it is natural to define weak equivalences (resp., fibrations) in $\tt CMon(\tt Mod(\Ac))$, as those morphisms that are weak equivalences (resp., fibrations) in ${\tt Mod}(\Ac)$, or, equivalently, in $\tt DG\Dc M$. Assumption 1.1.0.4 now reads\medskip

\noindent{\bf HAC 4}.\label{HAC4}\begin{enumerate}\item[-] The preceding classes of weak equivalences and fibrations endow $\tt CMon(\tt Mod(\Ac))$ with a combinatorial proper model structure.
\item[-] For any cofibrant $\Bc\in\tt CMon(\tt Mod(\Ac))$, the functor $$\Bc\0_\Ac -:{\tt Mod}(\Ac)\to{\tt Mod}(\Bc)$$ respects weak equivalences. \end{enumerate}\medskip

The axiom is easily understood. Recall first that the category $\tt CMon(\tt Mod(\Ac))$ is isomorphic to the category $\Ac\downarrow \tt DG\Dc A$, see Proposition \ref{CMonUnder}. Moreover, in \cite{DPP2} and \cite{PP}, we emphasized the importance of a base change, i.e., of the replacement of $\Ac\downarrow \tt DG\Dc A$ by $\Bc\downarrow \tt DG\Dc A$ (we actually passed from $\Ac=\Oc$ to $\Bc=\Jc$, where $\Jc$ was interpreted as the function algebra of an infinite jet bundle). This suggests to reflect upon a functor from $\tt CMon(\tt Mod(\Ac))$ to $\tt CMon(\tt Mod(\Bc))$, or, simply, from ${\tt Mod}(\Ac)$ to ${\tt Mod}(\Bc)$. The {\it natural$\,$} transition functor is $\Bc\0_\Ac-\,$, provided $\Bc$ is not only an object $\Bc\in\tt DG\Dc A$ but an object $\Bc\in\tt CMon(\tt Mod(\Ac))$. Just as the functor $-\0_\Ac M$ in HAC3, the functor $\Bc\0_\Ac-$ is required to preserve weak equivalences, at least for cofibrant objects $\Bc\in\tt CMon(\tt Mod(\Ac))$. HAC4 further asks that the above-defined weak equivalences and fibrations implement a model structure on $\tt CMon(\tt Mod(\Ac))$ and that cofibrancy be with respect to this structure. Finally, exactly as the so far considered model categories $\tt DG\Dc M$, $\tt DG\Dc A$, and ${\tt Mod}(\Ac)$, the model category $\tt CMon(\tt Mod(\Ac))$ must be combinatorial and proper.\medskip

Note that there are important examples that do not satisfy this axiom. For instance, it does not hold if the underlying category is the category ${\tt C=DG}R\,{\tt M}$ of non-negatively graded chain complexes of modules over a commutative ring $R$ with nonzero characteristic. Our task is to show that it {\it is} valid, if $R$ is replaced by the non-commutative ring $\Dc$ of characteristic 0. On the other hand, the assumption HAC4 is essential in proving, for instance, the existence of an analog of the module $\Omega_{B/A}$ of relative differential 1-forms. The existence of this cotangent complex `$\,\zW_{B/A}$' is on its part the main ingredient in the definition of smooth and \'etale morphisms.

\begin{prop}\label{CPMC}
The category $\tt CMon(Mod(\Ac))$, whose morphisms are weak equivalences (resp., fibrations) if they are weak equivalences (resp., fibrations) in $\tt DG\Dc M$, and whose morphisms are cofibrations if they have in $\tt CMon(Mod(\Ac))$ the left lifting property with respect to trivial fibrations, is a combinatorial proper model category.
\end{prop}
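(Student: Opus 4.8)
The plan is to reduce the statement entirely to the already-established properties of $\tt DG\Dc A$ by means of the isomorphism of categories of Proposition \ref{CMonUnder}. For nonzero $\Ac\in\tt DG\Dc A$ this isomorphism identifies $\tt CMon(\tt Mod(\Ac))$ with the coslice category $\Ac\downarrow\tt DG\Dc A$, under which the underlying-object functor becomes the forgetful functor $U:\Ac\downarrow\tt DG\Dc A\to\tt DG\Dc A$ sending a commutative monoid to its underlying differential graded $\Dc$-algebra. Since the model structure of $\tt DG\Dc A$ (obtained as the category of algebras over the monad $T$, see Proposition \ref{L:Talg}) has its weak equivalences and fibrations created by the forgetful functor to $\tt DG\Dc M$, a morphism of $\Ac\downarrow\tt DG\Dc A$ is a weak equivalence (resp.\ a fibration) for the coslice model structure exactly when its underlying $\tt DG\Dc M$-morphism is one. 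Hence the three classes singled out in the statement coincide, under the isomorphism, with those of the standard coslice model structure: weak equivalences and fibrations are $U$-created, and the cofibrations are, in either description, the maps with the left lifting property against the trivial fibrations.

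First I would invoke the standard fact that the coslice of a model category is again a model category with the $U$-created weak equivalences and fibrations, see \cite[Theorem 7.6.5]{Hir}; applied to $\tt DG\Dc A$ this furnishes a model structure on $\Ac\downarrow\tt DG\Dc A\simeq\tt CMon(\tt Mod(\Ac))$. For properness I would use that the (co)limits relevant to the two properness conditions are computed in $\tt DG\Dc A$: the forgetful functor $U$ creates all limits and all connected colimits, so that a pushout (resp.\ a pullback) square in $\Ac\downarrow\tt DG\Dc A$ underlies, via $U$, the corresponding pushout (resp.\ pullback) square in $\tt DG\Dc A$ together with the maps out of (resp.\ into) $\Ac$. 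As weak equivalences, cofibrations and fibrations are $U$-created, left and right properness of $\Ac\downarrow\tt DG\Dc A$ follow immediately from the properness of $\tt DG\Dc A$ established in Theorem \ref{T:leftproper}.

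It remains to transfer combinatoriality. Local presentability passes to coslice categories of locally presentable categories \cite{AR}, so $\Ac\downarrow\tt DG\Dc A$ is locally presentable because $\tt DG\Dc A$ is, by Proposition \ref{ProperCombinatorial}; in particular every object of the coslice is small. Cofibrant generation then transfers through the left adjoint $F=\Ac\0-$ of $U$ (recall that the binary coproduct in $\tt DG\Dc A$ is the tensor product $\0$): by adjunction a coslice morphism has the right lifting property against $F(I)$ (resp.\ $F(J)$) precisely when its underlying $\tt DG\Dc A$-morphism has the right lifting property against the generating cofibrations $I$ (resp.\ the generating trivial cofibrations $J$) of $\tt DG\Dc A$, so that $F(I)$ and $F(J)$ are generating sets for the coslice model structure. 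This shows $\tt CMon(\tt Mod(\Ac))$ is a combinatorial proper model category. I do not expect a genuine obstacle: all the substantive content is already packaged in Proposition \ref{CMonUnder} and in the properties of $\tt DG\Dc A$ proved above, and the only point requiring care is checking that the classes of maps defined in the statement are indeed the $U$-created ones, which holds because the model structure on $\tt DG\Dc A$ is itself created from $\tt DG\Dc M$.
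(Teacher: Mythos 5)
Your proposal is correct and follows the same overall strategy as the paper: both reduce the statement, via the isomorphism $\tt CMon(Mod(\Ac))\simeq\Ac\downarrow\tt DG\Dc A$ of Proposition \ref{CMonUnder}, to the standard coslice model structure on $\Ac\downarrow\tt DG\Dc A$, identify the three classes of the statement with the classes created from $\tt DG\Dc A$ (hence from $\tt DG\Dc M$), and then import existence, properness and cofibrant generation from the corresponding properties of $\tt DG\Dc A$ (the paper packages this in the citation \cite[Theorem 2.8]{HirOU}, which directly yields properness and the generating sets $L_\0(I)$, $L_\0(J)$, while you reconstruct the properness transfer by noting that $U$ creates limits and connected colimits -- both routes are fine). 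The one genuinely different step is combinatoriality: you invoke the fact that a coslice of a locally presentable category is locally presentable, citing \cite{AR} and Proposition \ref{ProperCombinatorial}, whereas the paper argues via monads -- it identifies $\tt CMon(Mod(\Ac))$ with $\tt DG\Dc A^{\Ac\0-}$ by checking the hypotheses of the crude monadicity theorem for $\op{For}$, verifies that the coproduct monad $\Ac\0-$ preserves directed colimits, and then applies \cite[2.78]{AR} on algebras over accessible monads. Your route is shorter and is in fact implicitly endorsed by the authors themselves, who use the same coslice-accessibility fact in the proof of Proposition \ref{P:basechange}; what the paper's longer monadicity argument buys in exchange is the explicit equivalence $\tt DG\Dc A^{\Ac\0-}\simeq\tt CMon(Mod(\Ac))$, which fits the monadic viewpoint used throughout Section 2. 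No gap in your argument.
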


\begin{proof} The categorical isomorphism $$\tt CMon(Mod(\Ac))\simeq \Ac\downarrow\tt DG\Dc A\;$$ of Proposition \ref{CMonUnder} allows endowing $\tt CMon(Mod(\Ac))$ with the model structure of $\Ac\downarrow\tt DG\Dc A$: a $\tt CMon(Mod(\Ac))$-morphism is a weak equivalence (resp., a fibration, a cofibration), if it is a weak equivalence (resp., a fibration, a cofibration) in $\tt DG\Dc A$ \cite{HirOU}. Hence, a $\tt CMon(Mod(\Ac))$-morphism is a weak equivalence (resp., a fibration), if it is a weak equivalence (resp., a fibration) in $\tt DG\Dc M$ \cite{DPP}. We know that these definitions provide $\tt CMon(Mod(\Ac))$ with a model structure, so that a $\tt CMon(Mod(\Ac))$-morphism is a cofibration if and only if it has in $\tt CMon(Mod(\Ac))$ the left lifting property with respect to trivial fibrations. It follows that the distinguished classes of Proposition \ref{CPMC} equip $\tt CMon(Mod(\Ac))$ with a model structure. In addition, in view of \cite[Theorem 2.8]{HirOU}, this model category is proper and cofibrantly generated. Its generating cofibrations (resp., trivial cofibrations) are obtained from the cofibrations $I$ (resp., trivial cofibrations $J$) of $\tt DG\Dc A$ by means of the left adjoint functor \be\label{e:monadic?}L_\0:{\tt DG\Dc A}\ni \Bc\mapsto (\Ac\to \Ac\0\Bc)\in \tt CMon(Mod(\Ac)):\op{For}\;.\ee

It remains to show that the category $\tt CMon(Mod(\Ac))$ is accessible. Remark that the monad $\op{For}L_\0$ coincides with the coproduct functor $\Ac\0-:{\tt DG\Dc A}\ni \Bc\mapsto \Ac\0\Bc\in {\tt DG\Dc A}$, and that, if $\op{For}$ is monadic, we have the equivalence of categories $\tt DG\Dc A^{\Ac\0-}\simeq\tt CMon(Mod(\Ac))$. To prove the accessibility of $\tt CMon(Mod(\Ac))$, it thus suffices to show that $\Ac\0-$ respects directed colimits {\it and} that $\op{For}$ satisfies the requirements of the monadicity theorem \ref{MonaTheo}. However, the coproduct functor $\Ac\0-$ of $\tt DG\Dc A$ commutes with colimits and in particular with directed ones. The first condition of Theorem \ref{MonaTheo} asks that $\op{For}$ reflect isomorphisms, what is easily checked. The second condition asks that $\tt CMon(Mod(\Ac))$ admit coequalizers of reflexive pairs and that $\op{For}$ preserve them. Since $\tt CMon(Mod(\Ac))$ is a model category, it has {\it all} coequalizers. Finally, when applying $\op{For}$ to the coequalizer of two parallel morphisms in $\tt CMon(Mod(\Ac))$, we get the coequalizer in $\tt DG\Dc A$ of the images by $\op{For}$ of the considered parallel morphisms. Indeed, as for the universality of this coequalizer-candidate in $\tt DG\Dc A$, any second coequalizer-candidate can be canonically lifted to $\Ac\downarrow\tt DG\Dc A$, and universality in $\Ac\downarrow\tt DG\Dc A$ provides a unique factorization-morphism in $\Ac\downarrow\tt DG\Dc A$, whose projection via $\op{For}$ is a factorization-morphism in $\tt DG\Dc A$. It can further be seen that the latter is unique, what completes the proof of the accessibility of $\tt CMon(Mod(\Ac))$.\end{proof}

The next proposition ensures that also Part 2 of HAC4 is satisfied.

\begin{prop}\label{P:basechange}
Let $\Ac$ be an object in ${\tt DG\Dc A}$ and let $\Bc$ be a cofibrant object in $\tt CMon(Mod(\Ac))$. The functor
$$
\Bc\otimes_\Ac -:\tt Mod(\Ac)\rightarrow \tt Mod(\Bc)
$$
preserves weak equivalences.
\end{prop}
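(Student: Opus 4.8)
The plan is to deduce the statement from the already-established Proposition \ref{P:flatness} (the flatness condition HAC3), by proving that the underlying $\Ac$-module of a cofibrant commutative monoid $\Bc\in\tt CMon(Mod(\Ac))$ is itself cofibrant in $\tt Mod(\Ac)$. Write $\op{For}(\Bc)$ for $\Bc$ regarded as an $\Ac$-module via its structural morphism $\Ac\to\Bc$. Granting the cofibrancy claim, the conclusion is immediate: a weak equivalence $f:P\to Q$ in $\tt Mod(\Ac)$ is detected in $\tt DG\Dc M$, and weak equivalences in $\tt Mod(\Bc)$ are likewise detected in $\tt DG\Dc M$ (Theorem \ref{P:AmodCGMMC}); moreover, as an object of $\tt Mod(\Ac)$, and hence of $\tt DG\Dc M$, one has $\Bc\0_\Ac P\simeq P\0_\Ac\op{For}(\Bc)$ by symmetry of $\0_\Ac$. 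If $\op{For}(\Bc)$ is cofibrant, Proposition \ref{P:flatness} applied with $M=\op{For}(\Bc)$ shows that $-\0_\Ac\op{For}(\Bc)$ preserves weak equivalences, so that $\Id_\Bc\0_\Ac f$ is a weak equivalence in $\tt DG\Dc M$, and therefore in $\tt Mod(\Bc)$.

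The whole problem thus reduces to the cofibrancy claim. First I would invoke Proposition \ref{CMonUnder} to identify $\tt CMon(Mod(\Ac))\simeq\Ac\downarrow\tt DG\Dc A$; under this isomorphism $\Bc$ is cofibrant precisely when its structural map $\Ac\to\Bc$ is a cofibration in $\tt DG\Dc A$, hence a retract, inside $\Ac\downarrow\tt DG\Dc A$, of a relative Sullivan $\Dc$-algebra $\Ac\hookrightarrow(\Ac\0\Sc V,d)$ \cite{DPP}. Applying the forgetful functor $\tt CMon(Mod(\Ac))\to\tt Mod(\Ac)$ turns this into a retract diagram in $\tt Mod(\Ac)$, and since cofibrant objects are closed under retracts, it suffices to prove that $\op{For}(\Ac\0\Sc V)$ is cofibrant, i.e. a retract of a Sullivan $\Ac$-module in the sense of Lemma \ref{L:cofobjsinAMod}.

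To establish this I would present the relative Sullivan $\Dc$-algebra through its transfinite construction \cite{DPP2}, writing $\Ac\0\Sc V=\op{colim}_{\zb<\zl}\Ac\0\Sc V_{<\zb}$ as a transfinite composition whose successor steps each adjoin a single generator $v_\zb$, so that on underlying $\Ac$-modules the step $\Ac\0\Sc V_{<\zb}\to\Ac\0\Sc V_{<\zb+1}$ is the inclusion $B\hookrightarrow B\0_\bullet\Sc(\Dc\cdot v_\zb)$. Each symmetric power $\Sc^k V$ is a direct summand of $V^{\0 k}$, and tensor powers over $\Oc$ of free $\Dc$-modules are $\Dc$-projective (this is the very mechanism that makes $\Dc\0_\Oc\Dc$ projective in the proof of {\small MMC1}); hence $\Sc V$ is degree-wise $\Dc$-projective. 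Together with the lowering property of $d$, this exhibits $\op{For}(\Ac\0\Sc V)$ as a $\zS(I)$-cell up to retract, and the cofibrancy claim then follows from (the proof of) Lemma \ref{L:cofobjsinAMod}.

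The main obstacle is exactly this transfer of structure: one must verify that the algebra-generator filtration (by $V_{<\zb}$) and the $\Ac$-module structure obtained by tensoring with the symmetric powers $\Sc(\Dc\cdot v_\zb)$ are compatible, and that the induced $\Ac$-module differential is genuinely lowering for a well-ordered $\Dc$-basis of $\Sc V$. The characteristic-zero hypothesis is essential here, as it guarantees the $\Dc$-projectivity — and thus $\Oc$-flatness — of the symmetric powers; without it the building maps need not be $\Ac$-module cofibrations, in agreement with the remark before the statement that HAC4 fails over a ground ring of nonzero characteristic. Should this direct transfer prove delicate, an equivalent route bypasses the cofibrancy claim and runs the filtration argument of Proposition \ref{P:flatness} directly: via Lemma \ref{SimplTensA} one identifies $\Bc\0_\Ac P$ with $P\0_\bullet\Sc V$ and $\Id_\Bc\0_\Ac f$ with $f\0_\bullet\Id_{\Sc V}$, and then applies Lemmas \ref{L:filtrweq} and \ref{L:limitord} to the transfinite filtration, the associated graded maps being of the form $f\0\Id_{\Sc^k(\cdots)}$, which are weak equivalences by the mapping-cone and K\"unneth computation already carried out in Lemma \ref{L:filtrtech}.
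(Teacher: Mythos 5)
Your overall reduction is sound as far as it goes: since weak equivalences in $\tt Mod(\Ac)$, $\tt Mod(\Bc)$ and $\tt DG\Dc M$ all coincide on underlying complexes, Proposition \ref{P:flatness} applied to $M=\op{For}(\Bc)$ would immediately give the claim, \emph{provided} the underlying $\Ac$-module of a cofibrant object of $\tt CMon(Mod(\Ac))$ is cofibrant in $\tt Mod(\Ac)$. That is exactly where the gap sits. After reducing to $\Bc=\Ac\0\Sc V$ (correct, via the factorization/retract argument the paper also uses), you assert that degree-wise $\Dc$-projectivity of $\Sc V$ together with the lowering property ``exhibits $\op{For}(\Ac\0\Sc V)$ as a $\zS(I)$-cell up to retract''. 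But the cells of $\zS(I)$ adjoin copies of $\Ac\0 S^{n}$ with $S^{n}=\Dc\cdot 1_n$ \emph{free} on one generator, and Theorem \ref{L:cofobjsinAMod} characterizes cofibrant $\Ac$-modules as retracts of Sullivan $\Ac$-modules $\Ac\0_\bullet W$ with $W$ a \emph{free} graded $\Dc$-module carrying a well-ordered homogeneous basis along which $d$ lowers. The graded pieces $\Sc^k(\Dc\cdot v)$, built from $\Dc\0_\Oc\cdots\0_\Oc\Dc$, are $\Dc$-projective but not visibly free, so to realize $\Ac\0\Sc V$ as a retract of such a cell complex you must embed $\Sc V$ as a summand of a free graded $\Dc$-module, extend the lowering differential across the complement, and produce the retraction as a morphism of $\Ac$-modules commuting with the differentials; none of this is carried out, and it is not routine. (You would also be leaning on the unproved direction of Theorem \ref{L:cofobjsinAMod} unless you argue via the left lifting property directly.) So the primary route, as written, does not close.

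Your fallback route is, essentially, the paper's proof: retract $\Bc$ onto $\Ac\0\Sc V$, identify $f\0_\Ac\Id_{\Ac\0\Sc V}$ with $f\0\Id_{\Sc V}$ via Lemma \ref{SimplTensA}, filter by the $\Sc V_{<\zb}$, and apply Lemmas \ref{L:limitord} and \ref{L:filtrweq}, the associated graded maps $f\0\Id_{\Sc^k S^{n(\zg)}}$ being handled by the K\"unneth/mapping-cone computation of Lemma \ref{L:filtrtech}. The one step you compress is the verification that the differential transported to $P'\0\Sc S^{n(\zg)}$ really is lowering (the paper's computation (\ref{sul})), which is what licenses reducing the successor step to the Step-1 argument of Theorem \ref{T:leftproper}; the paper spends most of its proof on exactly this. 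The advantage of the paper's route is precisely that it never needs $\Sc V$ to be cofibrant as an $\Ac$-module -- only $\Oc$-flatness of the graded pieces, which follows from the $\Oc$-projectivity of $\Dc$ and the characteristic-zero splitting of the symmetrization.
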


\proof By assumption the morphism $\zf_\Bc:\Ac\to\Bc$ is a cofibration in $\tt CMon(Mod(\Ac))\simeq\Ac\downarrow {\tt DG\Dc A}$, i.e., a cofibration in ${\tt DG\Dc A}$. Consider now, in ${\tt DG\Dc A}$, the cofibration - trivial fibration factorization of $\zf_\Bc$ constructed in \cite[Theorem 5]{DPP2}:

\begin{equation}\label{e:trick2}
\begin{tikzcd}[column sep=large]
(\Ac,d_\Ac)\arrow[rightarrowtail]{r} \arrow[rightarrowtail]{d} & (\Ac\otimes \Sc V,d_2) \arrow[two heads]{d}{\sim}\\
(\Bc,d_\Bc)\arrow[swap]{r}{\Id_{\Bc}} \arrow[dashed]{ur} & (\Bc,d_\Bc)\\
\end{tikzcd}.
\end{equation}
The dashed arrow in this diagram exists in view of the left lifting property of cofibrations with respect to trivial fibrations. The diagram

\begin{equation}\label{e:trick2}
\begin{tikzcd}[column sep=large]
&(\Ac,d_\Ac)\arrow[rightarrowtail]{dr} \arrow[rightarrowtail]{dl} \arrow[rightarrowtail]{d}&\\
(\Bc,d_\Bc)\arrow[dashed]{r} & (\Ac\otimes \Sc V,d_2)\arrow[two heads]{r}{\sim} & (\Bc,d_\Bc)\\
\end{tikzcd}
\end{equation}
now shows that $\Ac\to \Bc$ is a retract of $\Ac\to \Ac\otimes \Sc V$ in $\Ac\downarrow {\tt DG\Dc A}$, or, equivalently, that $\Bc$ is a retract of $\Ac\0\Sc V$ in ${\tt CMon(Mod}(\Ac))$ and so in ${\tt Mod}(\Ac)$.\medskip

Let now $f:P\to Q$ be a weak equivalence in $\tt Mod(\Ac)$. Since, as easily checked, $f\0_\Ac\Id_\Bc$ is a retract of $f\0_\Ac\Id_{\Ac\0\mathcal{S}V}$ in $\tt Mod(\Ac)$, it suffices to show that the latter morphism is a weak equivalence in $\tt Mod(\Ac)$. Indeed, in this case $f\0_\Ac\Id_\Bc$ is also a weak equivalence in $\tt Mod(\Ac)$, so a weak equivalence in $\tt DG\Dc M\,$; it follows that the $\tt  Mod(\Bc)$-morphism $f\0_\Ac\Id_\Bc$ is a weak equivalence in $\tt Mod(\Bc)$, what then completes the proof.\medskip

If we use the identification detailed in Lemma \ref{SimplTensA}, the morphism $$f\0_\Ac\Id_{\Ac\0\mathcal{S}V}:P\0_\Ac(\Ac\0\mathcal{S}V)\to Q\0_\Ac(\Ac\0\mathcal{S}V)$$ becomes $$f\0\Id_{\mathcal{S}V}:P\0\mathcal{S}V\to Q\0\mathcal{S}V$$ and it suffices to prove that $f\0\Id_{\Sc V}$ is a weak equivalence in $\tt Mod(\Ac)$, i.e., in $\tt DG\Dc M$.\medskip

It is known from \cite[Theorem 5 and Section 6.2]{DPP2} that $(\Ac\otimes \Sc V,d_2)$ is the colimit in ${\tt DG\Dc A}$ of a $\lambda$-sequence of injections
\begin{equation}\label{Colim}
\begin{split}(\Ac,d_\Ac)\hookrightarrow &(\Ac\otimes \Sc V_{<1},d_{2,<1})\hookrightarrow (\Ac\otimes \Sc V_{<2},d_{2,<2})\hookrightarrow \ldots\\
&\ldots(\Ac\otimes \Sc V_{<\omega},d_{2,<\omega})\hookrightarrow (\Ac\otimes \Sc V_{<\omega+1},d_{2,<\omega+1})\hookrightarrow \ldots\end{split}
\end{equation}
where $\Ac\otimes \Sc V_{<\zb}$ has the usual meaning (see above) and where $d_{2,<\zb}$ is the restriction of $d_2\,$. Since colimits in an undercategory are computed as colimits in the underlying category, the commutative $\tt Mod(\Ac)$-monoid $(\Ac\0\Sc V,d_2)$ is also the colimit of the preceding $\zl$-sequence in $\Ac\downarrow {\tt DG\Dc A}\simeq \tt CMon(Mod(\Ac))$. Moreover, as a coslice category of an accessible category is accessible, the categories $\tt CMon(Mod(\Ac))$ and $\tt Mod(\Ac)$ are both accessible. It follows that the forgetful functor $F_\Ac:{\tt CMon(Mod}(\Ac))\to {\tt Mod}(\Ac)$ commutes with filtered colimits as right adjoint $\Sc_\Ac\dashv F_\Ac$ between accessible categories. Hence, the sequence (\ref{Colim}) is a $\lambda$-filtration of $\Ac\0 \Sc V$ in ${\tt Mod}(\Ac)$. We can now argue as in the proof of Proposition \ref{P:flatness}: the sequence
\begin{equation*}
\begin{split}(P,d_P)\hookrightarrow &(P\0\Sc V_{<1},\zd_{P,<1})\hookrightarrow (P\otimes \Sc V_{<2},\zd_{P,<2})\hookrightarrow \ldots \\
&\ldots(P\otimes \Sc V_{<\omega},\zd_{P,<\omega})\hookrightarrow (P\otimes \Sc V_{<\omega+1},\zd_{P,<\omega+1})\hookrightarrow \ldots
\end{split}
\end{equation*}
is a $\zl$-filtration of $(P\0\Sc V,\zd_P)$ -- in $\tt Mod(\Ac)$, as well as in $\tt DG\Dc M$. Here $$\zd_P=\imath\circ(d_P\0\Id_\0+\Id_P\0\, d_2)\circ\imath^{-1}$$ is the differential $d_P\0\Id_\0+\Id_P\0\, d_2$ pushed forward from $P\0_\Ac(\Ac\0\Sc V)$ to $P\0\Sc V$. Let now $\zvf$ be, as in the proof of Proposition \ref{P:flatness}, the natural transformation between the $\tt DG\Dc M$-filtration functors $F_\zb(P\0\Sc V)=P\0\Sc V_{<\zb}$ and $F_\zb(Q\0\Sc V)=Q\0\Sc V_{<\zb}$, defined by $\zvf_\zb=f\0\Id_{\Sc V_{<\zb}}$:
\begin{center}
\begin{equation}\label{dia}
\begin{tikzcd}
(P,d_P) \ar[hook,r] \ar{d}{\zvf_0=f}&(P\otimes \Sc V_{<1},\delta_{P,<1})\ar{d}{\zvf_1=f\otimes\Id_{\Sc V_{<1}}}\ar[hook,r]&\ldots\ar[hook,r]&(P\otimes \Sc V_{<\zb},\delta_{P,<\zb})\ar[hook,r]\ar{d} {\zvf_\zb=f\otimes\Id_{\Sc V_{<\zb}}}&\ldots\\
(Q,d_Q) \ar[hook,r]&(Q\otimes \Sc V_{<1},\delta_{Q,<1})\ar[hook,r]&\ldots\ar[hook,r]&(Q\otimes \Sc V_{<\zb},\delta_{Q,<\zb})\ar[hook,r]&\ldots\\
\end{tikzcd}\quad .
\end{equation}
\end{center}
It follows again that the $\tt DG\Dc M$-morphism $f\0\Id_{\Sc V}$ is compatible with the $\tt DG\Dc M$-filtrations. To show that $f\otimes \Id_{\Sc V}$ is a weak equivalence in $\tt DG\Dc M$, it suffices to prove that the $\zvf_\zb=f\otimes \Id_{\Sc V_{<\zb}}$, $\zb<\zl$, are weak equivalences, see Lemma \ref{L:limitord}. This proof will be a transfinite induction on $\zb<\lambda$.\medskip

The induction starts, since $\zvf_0=f$ is a weak equivalence by assumption. We thus must show that $\zvf_\zb$, $\zb<\zl$, is a weak equivalence, assuming that the $\zvf_\za$, $\za<\zb$, are weak equivalences.\smallskip

The limit ordinal case $\zb\in\mathbf{O}_\ell$ is a direct consequence of Lemma \ref{L:limitord}.\smallskip

Let now $\zb\in\mathbf{O}_s$ be the successor of an ordinal $\zg\,$.\medskip

To simplify notation, we denote the differential graded $\Dc$-module $$F_\zg(P\0\Sc V)=(P\0\Sc V_{<\zg},\zd_{P,<\zg})\quad (\text{resp., } F_\zg(Q\0\Sc V)=(Q\0\Sc V_{<\zg},\zd_{Q,<\zg}))$$ by $(P',d_{P'})$ (resp., $(Q',d_{Q'})$) and we denote the morphism $\zvf_\zg=f\0\Id_{\Sc V_{<\zg}}$ by $f'$. The isomorphism
\begin{equation}\label{glupi}P\otimes\Sc V_{<\zb}\simeq P\0\Sc V_{<\zg}\otimes \Sc (\Dc\cdot 1_{n(\zg)})= P'\otimes \Sc S^{n(\gamma)}\end{equation} 
of graded $\Dc$-modules (it just replaces $\odot$ by $\0$ and vice versa, so that we will use it tacitly) allows to push the differential 
$$\zd_{P,<\zb}=\imath\circ(d_P\0\Id_\0+\Id_P\0\, d_2|_{\Ac\0\Sc V_{<\zb}})\circ\imath^{-1}$$
 of $P\otimes\Sc V_{<\zb}$ forward to a differential $\p_{P,<\zb}$ of $P'\otimes \Sc S^{n(\gamma)}$ and to thus obtain an isomorphic differential graded $\Dc$-module structure on $P'\otimes \Sc S^{n(\gamma)}$. The lowering property of $d_2$ induces a kind of lowering property for $\p_{P,<\zb}\,$:
\begin{equation}\label{sul}(\p_{P,<\zb}-d_{P'}\otimes \Id_{\Sc S^{n(\gamma)}})(P'\otimes \Sc^{k+1}S^{n(\gamma)})\subset P'\otimes\Sc^{k}S^{n(\gamma)}.\end{equation} Indeed, let $p\,\0 \odot_iv_{\za_i}\0 \odot_js_j$ be an element in $P'\otimes \Sc^{k+1} S^{n(\gamma)}$ (notation is self-explaining, in particular $\za_i<\zg$ and $s_j=D_j\cdot 1_{n(\zg)}$). We have
$$\p_{P,<\zb}(p\,\0 \odot_iv_{\za_i}\0 \odot_js_j)= \zd_{P,<\zb}(p\,\0 \odot_iv_{\za_i}\odot \odot_js_j)=$$ \be\label{preceding}\imath(d_P\0\Id_\0+\Id_P\0\, d_2|_{\Ac\0\Sc V_{<\zb}})(p\,\0(1_\Ac\0 (\odot_iv_{\za_i}\odot \odot_js_j)))\;.\ee
When noticing that $$1_\Ac\0(\odot_iv_{\za_i}\odot \odot_js_j)=(1_\Ac\0 \odot_i v_{\za_i})\lozenge \lozenge_j(1_\Ac\0 s_j)\;,$$ where $\lozenge$ is the multiplication in $\Ac\0\Sc V_{<\zb}$, and when remembering that $d_2$ is a derivation of $\lozenge$, we see that the expression in Equation (\ref{preceding}) reads 
$$\imath\left(d_Pp\0(1_\Ac\0(\odot_iv_{\za_i}\odot \odot_js_j))+\right.$$ \be\label{Losange}\left.(-1)^{|p|}p\0 d_2(1_\Ac\0\odot_iv_{\za_i})\odot\odot_js_j\right)\pm\ee $$\imath\left((-1)^{|p|}p\0(1_\Ac\0\odot_iv_{\za_i})\lozenge \sum_j\pm(1_\Ac\0 s_1)\lozenge\ldots\lozenge d_2(1_\Ac\0 s_j)\lozenge\ldots\lozenge(1_\Ac\0 s_{k+1})\right)\;.$$ The first term (two first rows) is equal to
$$\imath\left(d_Pp\0(1_\Ac\0\odot_iv_{\za_i})\0\odot_js_j+(-1)^{|p|}p\0 d_2(1_\Ac\0\odot_iv_{\za_i})\0\odot_js_j\right)=$$
$$\imath\left((d_P\0\Id_\0+\Id_P\0\, d_2|_{\Ac\0\Sc V_{<\zg}})(p\0(1_\Ac\0\odot_iv_{\za_i}))\right)\0\odot_js_j=$$ $$(\zd_{P,<\zg}\0\Id_{\Sc S^{n(\zg)}})(p\0\odot_iv_{\za_i}\0\odot_j s_j)=$$ $$(d_{P'}\0\Id_{\Sc S^{n(\zg)}})(p\0\odot_iv_{\za_i}\0\odot_j s_j)\;.$$ 
Since $d_2(1_\Ac\0 s_j)\in\Ac\0\Sc V_{<\zg}$, the remaining term in Equation (\ref{Losange}) is an element of $P'\0\Sc^{k}S^{n(\zg)}$, so that the claim (\ref{sul}) holds true.\medskip

The $\tt DG\Dc M$-isomorphism (\ref{glupi}) and the lowering property (\ref{sul}) are of course also valid for $Q\otimes \Sc V_{<\zb}\simeq Q'\0\Sc S^{n(\zg)}\,$. Recall now that it remains to prove that $$\zvf_\zb=f\0\Id_{\Sc V_{<\zb}}:(P\0\Sc V_{<\zb},\zd_{P,<\zb})\to (Q\0\Sc V_{<\zb},\zd_{Q,<\zb})$$ is a weak equivalence in $\tt DG\Dc M$, i.e., that $$f'\0\Id_{\Sc S^{n(\zg)}}:(P'\0\Sc S^{n(\zg)},\p_{P,<\zb})\to (Q'\0\Sc S^{n(\zg)},\p_{Q,<\zb})\;$$ is a weak equivalence. In view of the afore-detailed lowering property (\ref{sul}), it suffices to replicate the proof of the $\tt DG\Dc A$-case in Step 1 of the proof of Theorem \ref{T:leftproper}.

\section{Acknowledgments} The work of G. Di Brino was partially supported by the National Research Fund, Luxembourg, and the Marie Curie Actions Program of the European Commission (FP7-COFUND).

\section{Appendices}
\appendix

In the sequel, various (also online) sources have been used; notation is the same as in the main part of the text.

\section{Locally presentable categories}\label{LPC}

Recall that an infinite cardinal $\zk$ is {\em regular}, if no set of cardinality $\zk$ is the union of less than $\zk$ sets of cardinality less than $\zk$. For instance, if $\zk=\aleph_0=\zw$, no countable set is a finite union of finite sets, so that $\aleph_0=\zw$ is regular.\medskip

Let $(I,\le)$ be a {\em directed poset}, i.e., a partially ordered set in which every pair of elements has an upper bound, i.e., for any $i,j\in I$, there exists $k\in I$ such that $i\le k$ and $j\le k$. We view this poset as a category ${\tt I}$ whose morphisms $i\to j$ correspond to the inequalities $i\le j$. A diagram of type ${\tt I}$ in a category $\tt C$ is a direct system and its limit is a {\em direct limit} or {\em directed colimit}. More generally, for a regular cardinal $\zk$, a {\em $\zk$-directed poset} $(J,\le)$ is a poset in which every subset of cardinality less than $\zk$ has an upper bound. Then a colimit over a diagram of type $\tt J$ in a category $\tt C$ is called a {\em $\zk$-directed colimit}. For $\zk=\aleph_0$, we recover the preceding notion of directed colimit.\medskip

A finitely presented (left) module over a ring $R$ is an $R$-module that is generated by a finite number of its elements, which satisfy a finite number of relations. The categorical substitute for this idea is a category all of whose elements are directed colimits $\varinjlim_i c_i=\bigsqcup_i c_i/\sim$ of some generating objects $c_i$. This leads to the concept of locally $\zk$-presentable category: such a category is, roughly, a category that comes equipped with a (small) subset $S$ of $\zk$-small objects that generate all objects under $\zk$-directed colimits.\medskip

Remember first that the idea of smallness of an object $c\in\tt C$ is that the covariant Hom functor $\op{Hom}_{\tt C}(c,\bullet)$ commutes with a certain type of colimits. This actually means (see, for instance, \cite{DPP}) that any morphism $c\to \op{colim}_ic_i$ out of the small $c$ into a certain type of colimit $\op{colim}_ic_i$ factors through one of the maps $c_j\to \op{colim}_ic_i$. If $\zk$ is a regular cardinal, a {\em $\zk$-small, $\zk$-compact, or $\zk$-presentable} object $c\in\tt C$ is an object, such that $\op{Hom}_{\tt C}(c,\bullet)$ commutes with $\zk$-directed colimits. An object is called {\em small}, if it is $\zk$-small, for some regular $\zk.$\medskip

Combining the two last paragraphs, we get the

\begin{defi} For a regular cardinal $\zk$, a {\em locally $\zk$-presentable category} $\tt C$ is
\begin{itemize}

\item[1.] a locally small category

\item[2.] that has all small colimits

\item[3.] and admits a {\em set} $S\subset \op{Ob}({\tt C})$ of $\zk$-small objects, such that any object in $\tt C$ is the $\zk$-directed colimit of objects in $S$.

\end{itemize}

A category is termed a {\em locally presentable category}, if it is locally $\zk$-presentable, for some regular $\zk.$\end{defi}

\section{Operads, monads and algebras}\label{B:monads}

\subsection{Adjunctions}

Let us recall that the term {\em whiskering} refers to the composition of a natural transformation $\zn:F'\to F''$ between two functors $F',F'':{\tt C}\to {\tt D}$ and a functor $G:{\tt B}\to {\tt C}$ or a functor $H:{\tt D}\to {\tt E}$. The result $\zn G$ is a natural transformation $F'G\to F''G$ (where we omitted the symbol $\circ$). Its component $(\zn G)_b:F'(G b)\to F''(G b)$ is given by $(\zn G)_b=\zn_{G b}\,.$ Similarly $H\zn$ is the natural transformation $H F'\to H F''$ with component $(H\zn)_c=H(\zn_c)$. \medskip

Any adjunction is a unit-counit adjunction. Indeed, let $F:{\tt C}\rightleftarrows{\tt D}:G$ be an adjunction: \be\label{Adj}\zF_{c,d}:\te{Hom}_{\tt D}(Fc,d)\simeq \te{Hom}_{\tt C}(c,Gd)\;,\ee functorially in $c$ and $d$. For any $c$ and for $d=Fc$, we thus get a map $$\zh_c:=\zF_{c,Fc}(\Id_{Fc})\in\te{Hom}_{\tt C}(c,GFc)\;.$$ The resulting arrow $$\zh:\Id_{\tt C}\to GF\;$$ is a natural transformation. For any $c\in \tt C$, $d\in\tt D$ and $f\in\te{Hom}_{\tt D}(Fc,d)$, we now have a map $G(f)\in\te{Hom}_{\tt C}(GFc,Gd)$ and a map $$\zF_{c,d}(f)=G(f)\circ\zh_c\in\te{Hom}_{\tt C}(c,Gd)\,.$$ When using the inverse of $\zF$, we obtain a natural transformation $$\ze:FG\to \Id_{\tt D}\;$$ with components $$\ze_d:=\zF^{-1}_{Gd,d}(\Id_{Gd})\in\te{Hom}_{\tt D}(FGd,d)\;.$$ Moreover, we see that $$(\Id_{G})_d=\Id_{Gd}=\zF_{Gd,d}(\zF^{-1}_{Gd,d}(\Id_{Gd}))=G(\ze_d)\circ\zh_{Gd}=(G\ze\circ \zh G)_d\;,$$ so that the composition of natural transformations $$G\stackrel{\zh G}{\longrightarrow}GFG\stackrel{G\ze}{\longrightarrow}G$$ gives the identity transformation $\Id_G\,.$ The dual relation, stating that the composition $$F\stackrel{F\zh}{\longrightarrow}FGF\stackrel{\ze F}{\longrightarrow}F$$ is equal to $\Id_F$, is obtained analogously.

\begin{Defi} A {\em unit-counit adjunction} $\langle F,G,\zh,\ze\rangle$ between two categories $\tt C$ and $\tt D$ is a pair of functors $F:{\tt C}\to {\tt D}$ and $G:{\tt D}\to {\tt C}$, together with two natural transformations $\zh:\Id_{\tt C}\to GF$ and $\ze:FG\to \Id_{\tt D}$, called the {\em unit} and the {\em counit}, such that the compositions $$G\stackrel{\zh G}{\longrightarrow}GFG\stackrel{G\ze}{\longrightarrow}G$$ and $$F\stackrel{F\zh}{\longrightarrow}FGF\stackrel{\ze F}{\longrightarrow}F$$ are equal to $\Id_G$ and $\Id_F$, respectively. \end{Defi}

The concept of unit-counit adjunction is clearly related to the notion of equivalence of categories. Moreover, as sketched above, an adjunction $\langle F,G,\zF\rangle$ in the usual Hom-set sense (see (\ref{Adj})) {\em is} a unit-counit adjunction $\langle F,G,\zh,\ze\rangle$. The converse is true as well, i.e., a Hom-set adjunction is the same as a unit-counit adjunction.

\subsection{Classical definition of operads}

An {\bf operad} lifts (the fundamental aspects of) the operations of algebras of a given type (e.g., of associative algebras or Lie algebras), their possible symmetries, the compositions of operations, as well as the specific relations they satisfy (e.g., associativity or Jacobi identity), to a more universal and abstract level. The latter is best thought of by viewing a universal abstract $n$-ary operation as a {\bf tree $T_n$} with $n$ leaves (or inputs) and $1$ root (or output).\medskip

To be more explicit, an algebra of a given type is a vector space $V$ together with concrete {\bf generating operations}, i.e., linear maps $V^{\0 n}\to V$ (with possibly varying $n$), which satisfy certain {\bf defining relations} $r_j = 0$. We will assume that the relations are {\bf multilinear}, i.e., of the form \be\label{MLR}r_j = \zS_k \psi_k = 0\;,\ee where $\psi_k$ is a composite of generating operations. To any such type of algebras, we can associate an operad. This operad $P$ of made of a family of vector spaces $P(n)$, $n\in\N$, over a field $\mathbb{K}$ of characteristic 0. The elements of $P(n)$ are the mentioned abstract $n$-ary operations or trees $T_n$. If some symmetries must be encoded, the $n$-th symmetric group $\mathbb{S}_n$ must act appropriately (from the right) on $P(n)$, so that we deal with a family $P(n)$ of (right) $\mathbb{S}_n$-, or, better, $\mathbb{K}[\mathbb{S}_n]$-modules ($\mathbb{K}[\mathbb{S}_n]$ denotes the group algebra of $\mathbb{S}_n$) -- which we refer to as an $\mathbb{S}$-module. Eventually, in an {\bf operad}, the $\mathbb{S}$-module $P$ comes equipped with $\mathbb{K}$-linear composition maps $$\zg_{i_1\ldots i_k}: P(k)\0 P(i_1)\0\ldots\0 P(i_k)\to P(i_1 +\ldots + i_k)\;$$ of the abstract operations or trees. This composition is associative and has a unit $\mathbf{1}\in P(1)$ (further, it respects the possible $\mathbb{S}$-action).\medskip

An {\bf algebra over an operad} $P$ is given by a sequence of $\mathbb{K}$-linear maps \be\label{repr}\rho_n: P(n)\otimes_{\mathbb{S}_n} V^{\otimes n} \to V\ee that respect composition and send the operadic unit $\mathbf{1}$ to the identity $\Id_V$ (and respect the $\mathbb{S}$-action). These maps should be thought of as maps that assign to an abstract operation $T_n\in P(n)$, whose leaves are labelled by $n$ elements of $V$, a specific element of $V$. In other words, they assign a concrete operation to any abstract one, thus defining an algebraic structure on $V$, i.e., thus endowing $V$ with a so-called {\bf $P$-algebra} structure. {\it The $P$-algebras and their morphisms form a category, which turns out to be equivalent to the category of algebras of the initially considered type}.\medskip

Operads are of importance for several reasons. One of them is that they often highlight a common feature that underlies (apparently) different concepts. Likewise, when dealing with a result that should hold for all types of algebras, one can try to prove it once for all using an operadic approach. Furthermore, operads exhibit quite a number of common aspects with associative algebras. Hence, an algebraic or geometric situation that gives rise to an associativity property of some sort, might be advantageously dealt with via operads. These examples are however far from being exhaustive.

\subsection{Functorial definition of operads - monads - algebras}\label{OMA}

The category $\texttt{{End}(C)=[C,C]}$ of endofunctors of a category $\texttt{C}$ is strict monoidal. The monoidal product $\otimes$ is the composition $\circ$ of endofunctors and the monoidal identity $I$ is the identity functor $\Id_{\tt C}$. A {\bf monad} in $\texttt{C}$ is a monoid in $\texttt{{End}(C)}$, i.e., an endofunctor $T$ of $\tt C$ that, roughly, is equipped with an associative unital multiplication.\medskip

Let us be more precise. If $T:{\tt C}\to {\tt C}$ is an endofunctor, we set $T^2=T\circ T$ and $T^3=T\circ T^2=T^2\circ T\,$.

\begin{Defi}\label{monad} A {\bf monad } $T=\langle T, \mu, \zh \rangle$ in a category ${\tt C}$ consists of an endofunctor $T:{\tt C}\to {\tt C}$ and two natural transformations
$$\mu:T^2\to T,\hspace{5pt}\eta:\Id_{\tt C}\to T\;,$$
which make the following diagrams commutative (we write $\Id$ instead of $\Id_{\tt C}$ and instead of $\Id_{T}$)
\begin{center}
\begin{tikzcd}
T^2\circ T = T\circ T^2\arrow{r}{T\mu} \arrow{d}[swap]{\mu T}& T^2 \arrow{d}{\mu}\\
T^2 \arrow[swap]{r}{\mu} & T\\
\end{tikzcd},\hspace{20pt}
\begin{tikzcd}
\Id\circ T \arrow{r}{\zh T} \arrow{dr}[swap]{\Id}& T^2\arrow{d}{\mu} & \arrow{l}[swap]{T\eta} \arrow{dl}{\Id}T\circ \Id\\
&T\\
\end{tikzcd}.
\end{center}
\end{Defi}

We now give the functorial definition of an operad:

\begin{Defi} An {\bf operad} is monad in the category $\tt \mathbb{K}-Vect$ of $\mathbb{K}$-vector spaces.\end{Defi}

Below, we write $\tt Vect$ instead of $\tt \mathbb{K}-Vect$. In the classical definition, operads are defined as $\mathbb{S}$-modules $P$ equipped with an associative unital composition (of trees) (which is equivariant with respect to the $\mathbb{S}$-action). However, any $\mathbb{S}$-module $P$ gives rise to an endofunctor $\widetilde{P}: \texttt{Vect}\to \texttt{Vect}$, called {\bf Schur functor}, and defined on objects $V\in\tt Vect$ by
$$
\widetilde{P}(V) = \bigoplus_{n\in\N} P(n) \otimes_{\mathbb{S}_n} V^{\otimes n}\in\tt Vect\;,
$$ where the tensor product is over $\mathbb{K}[\mathbb{S}_n]\,.$ The map $\,\widetilde{-}\,$ from $\mathbb{S}$-modules to Schur functors is bijective and respects all operations. Hence, it is possible to identify $\mathbb{S}$-modules $P$ and Schur endofunctors $\widetilde{P}$. Now (functorially defined) operads $\langle \widetilde{P},\zm,\zh\rangle$ that are implemented by Schur functors, and, more generally, monads $\langle T,\zm,\zh\rangle$ (in Definition \ref{monad}) can be thought of as the realm of abstract operations $P$. The transformation $\zm$ `is' the composition of abstract operations, and the square diagram means that this composition is associative. When viewing $T$ as a Schur functor and similarly for $\Id=\Id_{\tt C}=\Id_{\tt Vect}$, we realize that $\Id:V\mapsto V$ corresponds to the $\mathbb{S}$-module $I(n)=0$ $(n\neq 1)$ and $I(1)=\mathbb{K}$. Hence, the transformation $\zh:\Id\to T$ must be identified with the $\mathbb{S}$-module morphism, which is just given by the $\mathbb{K}$-linear map $\zh_1:I(1)\to T(1)$ defined by $\zh_1(1)\in T(1)$. In view of the triangle diagrams, this means that the transformation $\zh$ `is' the unit abstract operation $\mathbf{1}\in T(1)$. {\it We thus understand that functorially defined operads induced by Schur functors are the same objects as classically defined operads}.

\begin{ex}\label{ExMonad} Any adjunction $\langle F,G,\zh,\ze\rangle$ between categories $\tt C$ and $\tt D$ defines a monad $T=GF:{\tt C}\to {\tt C}$, with composition $\zm=G\ze F:GFGF\to GF$ and unit $\zh:\Id_{\tt C}\to GF$.\end{ex}

Just as we defined algebras over operads, i.e., functorially, monads in $\tt Vect$, see Equation (\ref{repr}), we can define algebras over arbitrary monads.

\begin{Defi}If $T=\langle T,\mu,\zh\rangle$ is a monad in ${\tt C}$, a {\bf $T$-algebra} $\langle c, \zg\rangle$ consists of a $\tt C$-object $c$ and a $\tt C$-arrow $\zg:Tc\to c$, which make the following diagrams commutative:
\begin{center}
\begin{tikzcd}
T^2c \arrow{r}{T\zg} \arrow{d}[swap]{\mu_c}& Tc \arrow{d}{\zg}\\
Tc \arrow[swap]{r}{\zg} & c\\
\end{tikzcd}\hspace{20pt}
\begin{tikzcd}
c \arrow{r}{\eta_c} \arrow{dr}[swap]{\Id}& Tc\arrow{d}{\zg}\\
&c\\
\end{tikzcd}\quad .
\end{center}
\end{Defi}
\noindent In fact the object $c$ and the morphism $\zg:Tc\to c$ correspond to the vector space $V$ and the maps $$\zr_n:P(n)\0_{\mathbb{S}_n}V^{\0 n}\to V\;,$$ see Equation (\ref{repr}), hence, to the assignment of concrete operations to abstract ones. The square diagram encodes the information that `the composition of concrete maps' (upper and right parts of the diagram) and `the concrete map associated to abstract composition' (left and lower parts) coincide. The triangle diagram means that the abstract identity is sent to the concrete one.

\begin{ex}\label{Free} If $\langle\widetilde{P},\zm,\zh\rangle$ denotes an operad implemented by a Schur functor, the {\bf free $\widetilde{P}$-algebra} over $V$ -- in the categorical sense -- is the vector space $c=\widetilde{P}(V) = \bigoplus_{n\in \N} P(n)\otimes_{\mathbb{S}_n} V^{\otimes n}$, endowed with the $\mathbb{K}$-linear map $\gamma: \widetilde{P}(\widetilde{P}(V))\to \widetilde{P}(V)$ given by the component $\zm_V: (\widetilde{P}\circ \widetilde{P})(V)\to \widetilde{P}(V)$. In the case of the associative unital operad $u\Ac ss$, for instance, one actually finds the tensor algebra $\0(V)$ over $V$.\end{ex}

\begin{Defi}A {\bf $T$-algebra morphism} $f:\langle c',\zg'\rangle\to \langle c'',\zg''\rangle$ is a $\tt C$-arrow $f:c'\to c''\,$, which renders commutative the diagram
\begin{center}
\begin{tikzcd}
c'  \arrow{d}[swap]{f}&\arrow[swap]{l}{\zg'} Tc' \arrow{d}{Tf}\\
c''  & \arrow{l}{\zg''}Tc''\\
\end{tikzcd}\quad.
\end{center}\end{Defi}
\noindent For any underlying category $\tt C$, $T$-algebras and $T$-algebra morphisms form a category ${\tt C}^T$, which is often referred to as the {\bf Eilenberg-Moore category}. Further, as already mentioned, we have the

\begin{prop}\label{Equiv} Any type $\textup{\texttt{T}}$ of algebras (in the usual sense of the word), which satisfies the condition (\ref{MLR}), determines an operad $T$. The category ${\tt Vect}^T$ of $T$-algebras is equivalent to the category $\textup{\texttt{Alg}}$ of algebras of the considered type $\textup{\texttt{T}}$.\end{prop}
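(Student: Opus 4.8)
The plan is to realise the operad $T$ as the quotient of a free operad by the operadic ideal generated by the defining relations, and then to obtain the equivalence ${\tt Vect}^T\simeq\texttt{Alg}$ by matching, on both sides, the data of concrete operations subject to those relations. First I would package the generating operations of the type $\texttt{T}$ into an $\mathbb{S}$-module $E$: to each generating operation of arity $n$ I assign a basis vector of $E(n)$, with $\mathbb{S}_n$ acting by permutation of inputs (suitably (anti)symmetrised if the operation carries a symmetry). Then I would form the free operad $\mathcal{F}(E)$ on $E$, whose component $\mathcal{F}(E)(n)$ is spanned by rooted trees with $n$ labelled leaves and internal vertices decorated by elements of $E$, with operadic composition given by grafting of trees, unit $\mathbf{1}$ the one-leaf tree, and $\mathbb{S}_n$ permuting the leaves. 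The defining (left adjoint) property of $\mathcal{F}(E)$ yields at once that an $\mathcal{F}(E)$-algebra structure on a vector space $V$ is precisely an unconstrained family of $\mathbb{K}$-linear maps $V^{\otimes n}\to V$, one for each generator.

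Next, since by (\ref{MLR}) every defining relation $r_j=\sum_k\psi_k$ is multilinear, each composite $\psi_k$ is a well-defined element of a single component $\mathcal{F}(E)(n_j)$, so $r_j\in\mathcal{F}(E)(n_j)$. I would then take the operadic ideal $\langle R\rangle$ generated by $R=\{r_j\}_j$ — the smallest sub-$\mathbb{S}$-module of $\mathcal{F}(E)$ stable under operadic composition with arbitrary elements on either side — and set $P:=\mathcal{F}(E)/\langle R\rangle$. By the functorial definition of an operad, the Schur functor $\widetilde{P}$ then carries the structure of a monad $T=\widetilde{P}$, and, because $\widetilde{P}(V)=\bigoplus_{n}P(n)\otimes_{\mathbb{S}_n}V^{\otimes n}$, a structure map $\gamma:\widetilde{P}(V)\to V$ is the same datum as a family $\rho_n$ as in (\ref{repr}); thus the Eilenberg--Moore category ${\tt Vect}^T$ coincides with the category of $P$-algebras. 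This establishes the first assertion, that the type $\texttt{T}$ determines an operad $T$.

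For the equivalence I would argue directly on objects and morphisms. By the universal property of the quotient operad, a $P$-algebra structure on $V$ is exactly an $\mathcal{F}(E)$-algebra structure that annihilates $\langle R\rangle$; by the first paragraph this is a family of concrete operations $V^{\otimes n}\to V$ such that each $r_j$ evaluates to the zero map $V^{\otimes n_j}\to V$ — that is, precisely the data of an algebra of type $\texttt{T}$. On morphisms the two categories agree as well: a $T$-algebra morphism is a linear map making the defining square of a $T$-algebra morphism commute, which unwinds to compatibility with all generating operations, and this is exactly a morphism of type-$\texttt{T}$ algebras. Hence the assignment $V\mapsto V$ extends to a functor ${\tt Vect}^T\to\texttt{Alg}$ that is the identity on underlying spaces, bijective on objects and fully faithful, so that it is an isomorphism, a fortiori an equivalence, of categories.

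I expect the main difficulty to be bookkeeping rather than conceptual. One must verify carefully that $\mathcal{F}(E)$ is genuinely free (so that its algebras carry no hidden relations), that the multilinearity hypothesis (\ref{MLR}) is exactly what guarantees each $r_j$ to land in a single homogeneous arity component and therefore to generate an honest operadic ideal, and that passing to the quotient operad matches, under the Schur-functor dictionary, the restriction of ${\tt Vect}^T$ to those algebras satisfying the relations. The one genuinely load-bearing hypothesis is (\ref{MLR}): were the relations not multilinear they could not be encoded as elements of $\mathcal{F}(E)$ at all, and the whole correspondence would break down, which is precisely why the condition is imposed.
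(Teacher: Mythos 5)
Your argument is correct and takes essentially the same route the paper indicates: the paper states this proposition as a recalled standard fact and, in the remark immediately following it, describes precisely your construction of the operad as the quotient $\mathcal{F}(\mathrm{GO})/(\mathrm{DR})$ of the free operad on the generating operations by the operadic ideal generated by the multilinear defining relations. Your unwinding of the Schur-functor dictionary, identifying $T$-algebra structures and morphisms with type-$\texttt{T}$ algebras and their morphisms, merely supplies details that the paper leaves implicit.
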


\begin{rem}{\em There are different ways to {\em construct the operad $T$ that encodes a given type $\tt T$ of algebras}. Often one considers the $\mathbb{S}$-module $\te{GO}$ spanned by the generating operations of type $\tt T$ and encrypts their symmetries into the $\mathbb{S}$-action. The defining relations span an $\mathbb{S}$-submodule $\te{DR}$ of $\te{GO}$, which in turn generates an operadic ideal $(\te{DR})$ of the free operad $\Fc (\te{GO})$ over $\te{GO}$. The operad $T$ is then given by \be\label{CM1} T=\Fc (\te{GO})/(\te{DR})\;.\ee On the other hand, according to Proposition \ref{Equiv} and Example \ref{Free}, the free algebra of type $\tt T$ over a vector space $V$, or the free $T$-algebra over $V$, is given by $T(V)$, where $T$ is the Schur functor \be\label{CM2} T=\widetilde{P}\ee associated to an $\mathbb{S}$-module $P$. This observation allows to determine this module and this functor, which is even an operad. For instance, in the case of associative unital algebras, we find $$\0(V)=\bigoplus_{k\in\N} P(k)\0_{\mathbb{K}[\mathbb{S}_k]}V^{\0 k}\;,$$ so that the associative unital operad is defined by $$u\Ac ss(k)=P(k)=\mathbb{K}[\mathbb{S}_k]\;,$$ for all $k\in\N$. It can be shown that the operads $T$ in (\ref{CM1}) and (\ref{CM2}) coincide.}\end{rem}

In this paragraph, we emphasize that the equivalence mentioned in Proposition \ref{Equiv} does not hold in more general situations. Let $F\dashv G$ be a Hom-set adjunction between two categories ${\tt C}$ and ${\tt D}$, or, equivalently a unit-counit adjunction $\langle F,G,\zh,\ze\rangle$: $$F:{\tt C}\rightleftarrows{\tt D}:G\;.$$ In view of Example \ref{ExMonad}, the latter defines a monad $\langle T,\zm,\zh\rangle=\langle GF,G\ze F,\zh\rangle$ over $\tt C$. Actually any monad is implemented by an (and even by many) adjunction(s). One of the inducing adjunctions of the monad $T$, beyond $\langle F,G,\zh,\ze\rangle$, is an adjunction $\langle F^T, G^T,\zh^T,\ze^T\rangle$ between the categories ${\tt C}$ and ${\tt C}^T$: $$F^T:{\tt C}\rightleftarrows{\tt C}^T:G^T\;.$$ It is defined by the free $T$-algebra functor $F^T(c)=\langle Tc,\zm_c\rangle$ (see Example \ref{Free}) and by the forgetful functor $G^T\langle c,\zg\rangle=c$. The unit and counit are $\zh^T=\zh$ and $\ze^T$ with components $\ze^T_{\langle c,\zg\rangle}=\zg$. Recall now that the original counit $\ze$ is a natural transformation $\ze:FG\to \Id_{\tt D}\,$. For any $\tt D$-object $d$, we thus get a $\tt C$-morphism $$G\ze_d:GFG(d)=T(G(d))\to G(d)\;.$$ This leads to a $T$-algebra $\langle G(d), G\ze_d\rangle$ and even to a functor $$K:{\tt D}\to {\tt C}^T\;,$$ called the {\bf comparison functor}. It is the unique functor between these categories, such that \be\label{Comparison2 }G^TK=G\quad\te{and}\quad KF=F^T\;.\ee  In other words, the following diagram commutes: \begin{center}
\begin{tikzcd}
{\tt D} \arrow{r}{K} \arrow{d}{G}& {\tt C}^T\arrow{d}{G^T}\\
{\tt C}\arrow{u}{F} \arrow{r}{=} & {\tt C}\arrow{u}{F^T}
\end{tikzcd}\quad.\end{center}\medskip
Let now ${\tt C}={\tt Vect}$ and ${\tt D}={\tt Alg}$ be the categories of $\mathbb{K}$-vector spaces and of associative unital $\mathbb{K}$-algebras, respectively. The free associative unital algebra functor $F=\0:{\tt C}\to {\tt D}$ is left adjoint to the forgetful functor $G=\te{For}:{\tt D}\to {\tt C}$, i.e., $$\te{Hom}_{\tt Alg}(\0 V,A)\simeq \te{Hom}_{\tt Vect}(V,\te{For}A)\;.$$ If we start from this adjunction $F\dashv G$, the comparison functor is a functor $$K:{\tt Alg}\to {\tt Vect}^T\;.$$ As mentioned above, this functor induces an equivalence of categories. When identifying isomorphic objects in the source and the target categories, we get isomorphic categories and when identifying the latter, $K$ becomes identity and $F=F^T, G=G^T$: the free object functors and the forgetful functors in $\tt Alg$ and ${\tt Vect}^T$ coincide. However, in general situations, the comparison functor yields an equivalence, if the right adjoint $G$ is nice enough, i.e., if it is {\bf monadic}. In the present paper, we use the so-called {\bf crude monadicity} theorem:

\begin{thm}\label{MonaTheo} A functor $G:{\tt D}\to {\tt C}$ is {\bf monadic}, if
\begin{enumerate}\item $G$ has a left adjoint,
\item $G$ reflects isomorphisms,
\item $\tt D$ has and $G$ preserves coequalizers of reflexive pairs.
\end{enumerate}
\end{thm}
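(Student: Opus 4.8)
This is the \emph{crude monadicity theorem}, and the assertion ``$G$ is monadic'' unwinds to the statement that the comparison functor $K:{\tt D}\to {\tt C}^T$ is an equivalence of categories, where $T=\langle GF, G\ze F,\zh\rangle$ is the monad induced by the adjunction $F\dashv G$ (with $F$ the left adjoint provided by hypothesis (1)) and $Kd=\langle Gd, G\ze_d\rangle$. Rather than build a quasi-inverse by hand, the plan is to exhibit a left adjoint $L\dashv K$ and then prove that \emph{both} the unit $\Id_{{\tt C}^T}\to KL$ and the counit $LK\to \Id_{\tt D}$ are natural isomorphisms; an adjunction with invertible unit and counit is an adjoint equivalence, so its right adjoint $K$ is then an equivalence and the proof is done.

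First I would construct $L$. Given a $T$-algebra $\langle c,\zg\rangle$ with structure map $\zg: GFc\to c$, define $L\langle c,\zg\rangle$ as the coequalizer in $\tt D$ of the parallel pair
\[
FGFc \ \underset{F\zg}{\overset{\ze_{Fc}}{\rightrightarrows}}\ Fc \;.
\]
The crucial observation is that this pair is \emph{reflexive}: the arrow $F\zh_c:Fc\to FGFc$ is a common section, since $\ze_{Fc}\circ F\zh_c=\Id_{Fc}$ by the triangle identity and $F\zg\circ F\zh_c=F(\zg\circ\zh_c)=\Id_{Fc}$ by the unit axiom of the $T$-algebra. Hence hypothesis (3) guarantees that the coequalizer exists, and functoriality of $L$ on morphisms follows from its universal property. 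To see that $L\dashv K$, I would combine that universal property with the transpose bijection of $F\dashv G$: morphisms $L\langle c,\zg\rangle\to d$ correspond to maps $g:Fc\to d$ with $g\circ\ze_{Fc}=g\circ F\zg$, and a diagram chase shows that their adjoint transposes $c\to Gd$ are precisely the $T$-algebra morphisms $\langle c,\zg\rangle\to Kd$.

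Next, the unit. Applying $G$ to the defining coequalizer and using hypothesis (3) — $G$ preserves coequalizers of reflexive pairs — together with $G\ze_{Fc}=\zm_c$ and $GF\zg=T\zg$, one finds that $GL\langle c,\zg\rangle$ is the coequalizer in $\tt C$ of $\zm_c, T\zg:T^2c\rightrightarrows Tc$. But the $T$-algebra $\langle c,\zg\rangle$ itself exhibits a \emph{split} coequalizer of this very pair, with sections $\zh_{Tc}$ and $\zh_c$; the four split-coequalizer identities are exactly the associativity and unit axioms of the algebra together with the monad unit law and naturality of $\zh$. Since split coequalizers are absolute, $c$ is the coequalizer, so the unit $\langle c,\zg\rangle\to KL\langle c,\zg\rangle$ is an isomorphism.

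Finally, the counit $\ze^L_d:LKd\to d$, which is induced by $\ze_d:FGd\to d$. Here I would once more apply $G$: by hypothesis (3), $GLKd$ is the coequalizer of $G\ze_{FGd},\,GFG\ze_d:GFGFGd\rightrightarrows GFGd$, and $G\ze_d$ exhibits this as a split coequalizer with sections $\zh_{Gd}$ and $\zh_{GFGd}$, the requisite identities following from the two triangle identities and the naturality of $\zh$ and $\ze$. Thus $G\ze^L_d$ is an isomorphism, and hypothesis (2) — $G$ reflects isomorphisms — upgrades this to $\ze^L_d$ being an isomorphism itself. The conceptual heart of the proof, and its only genuine obstacle, is spotting the two reflexive pairs (so that (3) applies) and recognizing that, after transporting along $G$, they become \emph{split}, hence absolute, coequalizers; everything else is the routine bookkeeping of transposes and the verification of the split-coequalizer identities, with hypothesis (2) entering only at the very last step.
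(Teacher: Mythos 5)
The paper does not actually prove Theorem \ref{MonaTheo}: it is recalled in the appendix as the standard \emph{crude monadicity theorem} and used as a black box, so there is no in-paper argument to compare yours against. Your proposal is the standard textbook proof of that theorem and it is correct: the pair $\ze_{Fc}, F\zg$ is genuinely reflexive via $F\zh_c$ (the triangle identity and the unit axiom of the algebra give the two required retraction identities), so hypothesis (3) produces $L$ and makes $G$ of each defining coequalizer again a coequalizer; the transported pairs $\zm_c, T\zg$ and $G\ze_{FGd}, GFG\ze_d$ are split by $(\zh_c,\zh_{Tc})$ and $(\zh_{Gd},\zh_{GFGd})$ exactly as you state; and hypothesis (2) is used precisely once, to promote the invertibility of $G$ applied to the counit to invertibility of the counit itself. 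The only steps left implicit — that the unit of $L\dashv K$ is the canonical comparison between the two coequalizers of $\zm_c, T\zg$, and that the counit is the map induced by $\ze_d$ — are the routine bookkeeping you flag, and they go through.
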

\noindent Let us recall that a functor $G:{\tt D}\to {\tt C}$ {\bf reflects isomorphisms}, if a ${\tt D}$-morphism $m: d'\to d''$ is a $\tt D$-isomorphism whenever $G(m):G(d')\to G(d'')$ is a $\tt C$-isomorphism. Furthermore, a pair $f,g:d'\rightrightarrows d''$ is said to be a {\bf reflexive pair}, if $f$ and $g$ have a common section, i.e., if there is a morphism $h:d''\to d'$, such that $f\circ h=g\circ h= \Id_{d''}\,.$

\section{Internal Hom in modules over a commutative monoid}\label{IntHomMod}

Let $({\tt C}, \otimes,\te{\em I},\te{\underline{Hom}})$ be a closed symmetric monoidal category with all small limits and colimits.\medskip

In the main part of the present text, we reminded that the category $\tt Mod_{\tt C}(\Ac)$ of modules in $\tt C$ over a commutative monoid $\Ac$ in $\tt C$ is also a closed symmetric monoidal category with all small limits and colimits. However, we did not define the internal $\te{\underline{Hom}}_\Ac$ of this category of modules, at least not in the considered abstract setting. Remember first that $\tt C$ and $\tt Mod_{\tt C}(\Ac)$ are endowed with bifunctors $\te{ Hom}$ and $\te{ Hom}_\Ac$, and that $\tt C$ is in addition equipped with an internal $\te{\underline{Hom}}$. Before considering the internal $\underline{\te{Hom}}_\Ac$, recall still that a closed monoidal category $\tt C$ can be {\it equivalently defined} as a monoidal category together with, for any two objects $C'$ and $C''$, a $\tt C$-object $\te{\underline{Hom}}(C',C'')$ and a $\tt C$-morphism $$\te{ev}_{C',C''}:\te{\underline{Hom}}(C',C'')\otimes C'\to C''\;,$$ which are universal in the sense that, for every $\tt C$-object $X$ and $\tt C$-morphism $f:X\otimes C'\to C''$, there exists a unique $\tt C$-morphism $h:X\to\te{\underline{Hom}}(C',C'')$ such that $f=\te{ev}_{C',C''}\circ(h\otimes\te{id}_{C'})$. Indeed, if we start for instance from the usual definition, the existence of $\te{ev}_{C',C''}$ comes from $$\te{Hom}(X,\te{\underline{Hom}}(C',C''))\simeq \te{Hom}(X\otimes C',C'')\;,$$ when choosing $X=\te{\underline{Hom}}(C',C'')$. Moreover, if $h:X\to \te{\underline{Hom}}(C',C''),$ we get a $\tt C$-map $$f:=\te{ev}_{C',C''}\circ(h\otimes \te{id}_{C'}):X\otimes C'\to C''\;.$$ Conversely, if $f\in\te{Hom}(X\otimes C',C'')$, there exists a unique $h\in\te{Hom}(X,\te{\underline{Hom}}(C',C''))$, such that $\te{ev}_{C',C''}\circ(h\otimes \te{id}_{C'})=f.$ Now, if $M',M''\in\tt Mod_{\tt C}(\Ac)$, the $\tt Mod_{\tt C}(\Ac)$-object $\te{\underline{Hom}}_\Ac(M',M'')$ should be the kernel of the $`\,$map $$\te{\underline{Hom}}(M',M'')\ni f\mapsto f\circ\zm_{M'}-\zm_{M''}\circ (\te{id}_{\Ac}\otimes f)\in\te{\underline{Hom}}(\Ac\otimes M',M'')\,\text{'}\,.$$ To put this idea right, we consider the isomorphism $${\te{Hom}}(\te{\underline{Hom}}(M',M''),\te{\underline{Hom}}(\Ac\otimes M',M''))\simeq\te{Hom}(\te{\underline{Hom}}(M',M'')\otimes \Ac\otimes M',M'')\;,$$ and define the preceding kernel as the equalizer of the pair of parallel $\tt C$-arrows
$$\te{ev}_{M',M''}\circ(\te{id}\otimes \zm_{M'}),\zm_{M''}\circ(\te{id}_\Ac\otimes\te{ev}_{M',M''})\circ(\te{com}\otimes\te{id}_{M'})\in\te{Hom}(\te{\underline{Hom}}(M',M'')\otimes \Ac\otimes M',M'')\;.$$ This $\tt C$-object inherits an $\Ac$-module structure.

\section{Homotopy categories, derived categories, and derived functors}\label{HDCatDFun}

The interesting category is not the category $\tt Top$ of topological spaces, but the category of classes of topological spaces that have the same homotopy groups, i.e., roughly the same shape, although the are not necessarily homeomorphic. Therefore, we wish to view weak homotopy equivalences -- continuous maps that induce isomorphisms between homotopy groups -- as isomorphisms, i.e., we aim at introducing a new category in which weak homotopy equivalences become invertible. This is reminiscent of the localization of a commutative ring $R$ by a multiplicative subset $S\subset R$ -- which consists in the `best possible' construction of a commutative ring $R[S^{-1}]$ and a ring morphism $\ell: R \to R[S^{-1}]$, such that the images of $S$ become invertible elements in $R[S^{-1}]$. Similarly, it is possible to `localize' $\tt Top$ by the class $W$ of its weak homotopy equivalences, so that the elements of $W$ become invertible in the `localized category' ${\tt Top}[W^{-1}]$.\medskip

The exist various constructions of localized categories, of homotopy and derived categories, as well as of derived functors. It is actually difficult to find a structured approach that insists on differences between the most important of these notions and studies some relations between them carefully. We thought that a paper on a Homotopical Algebraic Context should contain a concise account on all this.

\subsection{Localization of a category with respect to a class of morphisms}

The {\bf localization of a category $\tt C$ by any collection of $\tt C$-morphisms $W$} is a universal pair, made of a category ${\tt C}[W^{-1}]$ and a functor $L:{\tt C} \to {\tt C}[W^{-1}]$, which sends the morphisms in $W$ to isomorphisms in ${\tt C}[W^{-1}]$. In fact one asks that the factorization resulting from the universal property be valid only up to natural isomorphism and that the functor $\bullet\circ L$ be fully faithful.\medskip

Specific examples of localization are, for instance, the {\bf localization of a model category $\tt C$ at its class $W$ of weak equivalences}, as well as, a bit more generally (recall that, in a model category, any isomorphism is a trivial cofibration and a trivial fibration (hence a weak equivalence), in view of the lifting property characterization), the {\bf localization at its weak equivalences $W$ of a category $\tt C$ {\it with} weak equivalences}, i.e., of a category $\tt C$ that comes equipped with a subcategory $W$, which contains all the isomorphisms of $\tt C$ and satisfies the 2 out of 3 axiom.\medskip

There exists a general construction for $({\tt C}[W^{-1}],L)$ (the objects of ${\tt C}[W^{-1}]$ are the same as those of $\tt C$ and $L$ is the identity on objects), but its result is not sufficiently handy. In the sequel, we often add conditions on $W$ or/and on $\tt C$. These then allow to avoid set-theoretical problems, to use less abstract constructions, and to give suitable detailed descriptions. \medskip

We describe now the localization of a category $\tt C$, in the case $W$ has `good' properties, i.e., is a {\bf multiplicative or localization system}, i.e., is a class of morphisms that satisfies four properties MS1 -- MS4 that we will not recall here \cite{KS}. We set $\op{Ob}({\tt C}[W^{-1}])=\op{Ob}({\tt C})$ and decide that, for any objects $X,Y$, a morphism $f\in \h_{{\tt C}[W^{-1}]}(X,Y)$ is an equivalence class $[(f',Y',t)]$ of triplets $(f',Y',t)$, $t\in W$, or, more precisely, of diagrams $${X\stackrel{f'}{\longrightarrow}Y'\stackrel{t}{\longleftarrow}Y,\quad t\in W}\;,$$ for the {relation} $(f', Y',t)\sim(f'', Y'',t')$ if and only if there exists $(f''',Y''',t'')$, as well as a commutative diagram
\begin{equation}
\xymatrix{& Y'\ar@{.>}[d] & \\
X \ar[ur]^{f'}\ar[dr]_{f''} \ar@{.>}[r]^{f'''} &Y''' & \ar@{.>}[l]_{t''}\ar[ul]_{t} \ar[dl]^{t'}Y\quad .\\
&Y''\ar@{.>}[u] &}
\end{equation}
If one defines $L:{\tt C}\to {\tt C}[W^{-1}]$ to be the functor given by $L(X)=X$ and, for $f:X\to Y$, by $L(f)=[(f,Y,\op{id}_Y)]$ (in view of MS1, we have $\op{id}_Y\in W$), then the pair $({\tt C}[W^{-1}],L)$ is the localization of $\tt C$ by $W$.\medskip

Let us still mention that, for an Abelian category $\tt A$, the category $\tt Ch(A)$ of chain complexes in $\tt A$ together with its class $W$ of quasi-isomorphisms, is a category with weak equivalences, but that $W$ is not a multiplicative system.

\subsection{Homotopy category of a model category}

Above we stressed the importance of the localization ${\tt Top}[W^{-1}]$ of $\tt Top$ at the class $W$ of its weak homotopy equivalences, i.e., at the class of its weak equivalences in its standard Quillen model structure.\medskip

More generally, if $\tt C$ is a model category or (only) a category with weak equivalences, the interesting category is the localization ${\tt C}[W^{-1}]$ of $\tt C$ at the class $W$ of weak equivalences. It is referred to as the {\bf homotopy category of the considered model category or category with weak equivalences}: \be\label{HoCatModCat}{\tt Ho}({\tt C})={\tt C}[W^{-1}]\;.\ee Actually the model category $\tt C$ models the more fundamental category $\tt Ho(C)$, which has the same objects as $\tt C$, but different morphisms. Explicit results on the homotopy category of a model category and on its morphisms can be found in \cite{Ho99}.

\subsection{Localization of additive and triangulated categories}

\newcommand{\stf}{\stackrel{f}{\longrightarrow}}
\newcommand{\stg}{\stackrel{g}{\longrightarrow}}
\newcommand{\sth}{\stackrel{h}{\longrightarrow}}
\newcommand{\stza}{\stackrel{\alpha}{\longrightarrow}}
\newcommand{\stzb}{\stackrel{\beta}{\longrightarrow}}

If $\tt C$ is an {\bf additive category} and $W$ a {\bf multiplicative system}, then the localized category ${\tt C}[W^{-1}]$ and the localization functor $L$ become additive as well, and they are universal among such additive pairs that send morphisms in $W$ to isomorphisms.\medskip

Localization of triangulated categories -- specific additive categories -- is still richer and it can be defined with respect to a class of objects!\medskip

Roughly, a {\bf triangulated category} or $\zD$-category is an additive category endowed with an invertible endofunctor $T$, called translation functor, and with a distinguished class of triangles (d.t.-s), i.e., of `triangular' sequences of morphisms $${X\stf Y\stg Z\sth TX}\;,$$ subject to six axioms TR0 -- TR5 that we will not describe \cite{KS}. The prototypical example of a $\zD$-category is the additive category $\mathbb{K}(\tt A)$ of chain complexes in an additive category $\tt A$ together with chain maps up to chain homotopies. The invertible endofunctor is the shift functor $[1]$ (with inverse $[-1]$) and the d.t.-s are the (triangles that are isomorphic to a) mapping cone triangles (triangle) $${X\stf Y\stg \op{Mc}(f)\sth X[1]}\;,$$where $f$ is any morphism and $g,h$ are the canonical ones (and where a morphism of triangles is made of four morphisms such that the resulting diagram commutes). The mentioned axioms TR0 - TR5 are actually the abstractions of the basic properties of the mapping cone triangles, which therefore satisfy of course these axioms.\medskip

It is possible to localize a $\zD$-category ${\tt D}$ with respect to a family of {\em objects}. More precisely, {\bf we localize a triangulated category with respect to a null system}, i.e., any subset $N$ of objects of ${\tt D}$, which satisfies three axioms NS1 -- NS3 that we do not describe \cite{KS}. To such a null system, we can associate a (right) multiplicative system $W$ made of those morphisms $f:X\to Y$ that can be extended to a d.t. with $Z\in N$. The idea is here that each morphism $f$ can be extended by a d.t. (axiom TR2) and that instead of looking at $f\in W$ we can look at $Z\in N$. The localization ${\tt D}/N$ of $\tt D$ with respect to $N$ is then the localization ${\tt D}[W^{-1}]$ of $\tt D$ with respect to the associated multiplicative $W$. The localization of a $\zD$-category with respect to a null system it is itself a $\zD$-category.

\subsection{Derived category of an Abelian category}

Let $\tt A$ be an additive category and denote by $\tt Ch(A)$ the category of chain complexes in $\tt A$ together with chain maps. The category $\mathbb{K}(\tt A)$ of chain complexes in $\tt A$ together with chain maps up to chain homotopies, is usually called the homotopy category of the additive category $\tt A$. In the case $\tt A$ is Abelian, this implies that those {\it chain maps, which are quasi-isomorphisms in $\tt Ch(A)$ for the good reason that they have an inverse up to chain homotopy, become isomorphisms in $\mathbb{K}(\tt A)$}. Note that the denomination homotopy category of $\tt A$ is not unambiguous, since the homotopy category ${\tt Ho}({\tt A})={\tt Ho(Ch(A))}$ of $\tt A$ or $\tt Ch(A)$ -- the weak equivalences $W$ of $\tt Ch(A)$ are {\em all the quasi-isomorphisms} -- has been defined above as the localization ${\tt Ch(A)}[W^{-1}]$.\medskip

The category $\mathbb{K}(A)$ is in some sense intermediate between the category $\tt Ch(A)$ and the {\bf derived category $\mathbb{D}(\tt A)$ of the Abelian category $\tt A$}. The latter is the category of chain complexes, but {\it all chain maps, which are quasi-isomorphisms in $\mathbb{K}(\tt A)$, become isomorphisms in $\mathbb{D}(\tt A)$.}\medskip

We will first construct this derived category $\mathbb{D}({\tt A})$ and then explain that, as suggested by this paragraph, it is actually equivalent to the homotopy category ${\tt Ho(Ch(A))}={\tt Ch(A)}[W^{-1}]$.\medskip

In fact the derived category $\mathbb{D}(\tt A)$ of an Abelian category $\tt A$ is the localization of the $\zD$-category $\mathbb{K}({\tt A})$ with respect to the null system
$$N({\tt A})=\{X\in\op{Ob}(\mathbb{K}({\tt A})): \Hc(X)\simeq 0\}\;,$$where $\Hc$
is the homology functor. The associated multiplicative system $\tilde W$ is then the class of morphisms $f:X\to
Y$ of $\mathbb{K}({\tt A})$, i.e., morphisms $f:X\to
Y$ of ${\tt Ch}({\tt A})$ considered up to chain homotopies,
such that $\Hc(\op{Mc}(f))\simeq 0$, or, still, such that $f$ is a quasi-isomorphism (this condition is of course independent of the chosen representative $f$). It is thus clear that in the localization \be\label{DerCatAbCat}\mathbb{D}({\tt A})=\mathbb{K}({\tt A})/N({\tt A})=\mathbb{K}({\tt A})[\tilde W^{-1}]\;,\ee the quasi-isomorphisms of ${\tt Ch}({\tt A})$ considered up to chain homotopies become isomorphisms. The same construction goes through for $\mathbb{D}^+({\tt A}),\mathbb{D}^-({\tt A}),\mathbb{D}^b({\tt A})$, whose objects are non-negatively graded, non-positively graded, and bounded chain complexes in $\tt A$, respectively.

\subsection{Relations}

The localization ${\tt Ho}({\tt Ch}({\tt A}))={\tt Ch}({\tt A})[W^{-1}]$ of the category ${\tt Ch}({\tt A})$ at its class $W$ of weak equivalences or quasi-isomorphisms is traditionally called the derived category of ${\tt A}$ (see nLab: `homotopy category'). However, in view of the preceding subsection, the derived category of $\tt A$ is also the localization $\mathbb{D}({\tt A})=\mathbb{K}({\tt A})[\tilde W^{-1}]$ of the $\zD$-category $\mathbb{K}({\tt A})$ at the multiplicative system $\tilde W$. The canonical composite functor $\Ic:{\tt Ch(A)}\to \mathbb{K}({\tt A})\to \mathbb{K}({\tt A})[\tilde W^{-1}]$ sends an element in $W$, i.e., a quasi-isomorphism, to an element in $\tilde W$, i.e., a quasi-isomorphism in $\tt Ch(A)$ up to chain homotopies, and finally to an isomorphism. The functor $\Ic$ thus factors through the homotopy category ${\tt Ch(A)}[W^{-1}]$, i.e., we get a functor $\tilde\Ic:{\tt Ch(A)}[W^{-1}]\to\mathbb{K}({\tt A})[\tilde W^{-1}]$. It can be proven \cite{MilDerCat} that $\tilde\Ic$ implements an equivalence of categories: $${\tt Ho(Ch(A))}={\tt Ch(A)}[W^{-1}]\simeq\mathbb{K}({\tt A})[\tilde W^{-1}]=\mathbb{D}({\tt A})\;.$$ In the case ${\tt A}={\tt Mod}(R)$, we get \be\label{DerHoDGRM}{\tt Ho}({{\tt DG}\text{\it\small R}\,{\tt M}})\simeq\mathbb{D}^+({\tt Mod}(R))\;.\ee

So far we considered the category ${{\tt DG}\text{\it\small R}\,{\tt M}}={{\tt DG}_+\text{\it\small R}\,{\tt M}}$ of differential non-negatively graded $R$-modules endowed with its standard weak equivalences or even with its standard {\bf projective model structure}. There exists a dual situation, i.e., an {\bf injective model structure} on the category ${{\tt DG}_-\text{\it\small R}\,{\tt M}}$ of differential non-positively graded $R$-modules. Its week equivalences are the quasi-isomorphisms (just as for the projective model structure), the cofibrations are the chain maps that are injective in each (strictly) negative degree (for the projective structure, they are the chain maps that are degree-wise injective with degree-wise $R$-projective cokernel), and the fibrations are the chain maps that are degree-wise surjective and have degree-wise an $R$-injective kernel (for the projective structure, they are the chain maps that are surjective in each (strictly) positive degree). Of course, \be\label{DerHoBoth}{\tt Ho}({{\tt DG}_+\text{\it\small R}\,{\tt M}}\,|_{\op{proj}})\simeq\mathbb{D}^+({\tt Mod}(R))\quad\text{and}\quad{\tt Ho}({{\tt DG}_-\text{\it\small R}\,{\tt M}}\,|_{\op{inj}})\simeq\mathbb{D}^-({\tt Mod}(R))\;.\ee

\subsection{Derivation of a functor whose source category is equipped with a class of morphisms} One defines left and right derived functors for a functor $F:\tt C\to D$, whose source $\tt C$ is endowed with a class $W$ of distinguished morphisms, even when $F$ does not transform morphisms in $W$ into isomorphisms. A {\bf right derived functor} (resp., a {\bf left derived functor}) of $F$ is then a functor $\mathbf{R}F:{\tt C}[W^{-1}]\to\tt D$ (resp., a functor $\mathbf{L}F:{\tt C}[W^{-1}]\to\tt D$) and a natural transformation $\eta:F \Rightarrow \mathbf{R}F\circ L_{\tt C}$ (resp., a natural transformation $\epsilon:\mathbf{L}F\circ L_{\tt C}\Rightarrow F$), such that the pair $(\mathbf{R}F,\eta)$ (resp., the pair $(\mathbf{L}F,\epsilon)$) be universal.\medskip

In the case $F$ does transform elements of $W$ into isomorphisms, it factors through the localized category, i.e., there is a functor $\mathbf{F}: {\tt C}[W^{-1}]\to\tt D$ such that $F\simeq \mathbf{F}\circ L_{\tt C}$. If we denote the latter natural isomorphism by $\eta:F\to\mathbf{F}\circ L_{\tt C}$ (resp., by $\epsilon: \mathbf{F}\circ L_{\tt C}\to F$), the pair $(\mathbf{F},\eta)$ (resp., the pair $(\mathbf{F},\epsilon)$) is the right derived functor $\mathbf{R}F$ (resp., the left derived functor $\mathbf{L}F$) of $F$. The factorization $\mathbf{F}$ can thus be interpreted as right {\it and} as left derived functor of $F$.

\subsection{Derived functors between categories with weak equivalences}

Let $$G:{\tt C}\to{\tt D}$$ be a {\bf functor of categories with weak equivalences}, i.e., a functor between categories with weak equivalences $W_{\tt C}$ and $W_{\tt D}$, such that $$G(W_{\tt C})\subset W_{\tt D}\;.$$ Since the localization functor $L_{\tt D}:{\tt D}\to {\tt D}[W_{\tt D}^{-1}]=\tt Ho(D)$ satisfies $L_{\tt D}(W_{\tt D})\subset\op{Isom}({\tt Ho(D)})$, the image of $W_{\tt C}$ by the composite functor $L_{\tt D}G:{\tt C}\to \tt Ho(D)$ is included in $(L_{\tt D}G)(W_{\tt C})\subset\op{Isom}({\tt Ho(D)})$, so that this functor factors through ${\tt C}[W_{\tt C}^{-1}]=\tt Ho(C)$: $$\mathbf{L_{\tt D}G}:\tt Ho(C)\to Ho(D)\;.$$ The functor $\mathbf{L_{\tt D}G}$ between homotopy categories is called the {\bf derived functor} of $G$.\medskip

In the case $\tt C$ and $\tt D$ are model categories -- specific categories with weak equivalences -- , the following holds. Let $$G:{\tt C}\rightleftarrows{\tt D}:H$$ be a {\bf Quillen adjunction}, i.e., an adjunction where $G$ respects Cof and $H$ respects Fib (or, equivalently, $G$ respects TrivCof and $H$ respects TrivFib, or $G$ respects Cof and TrivCof, or, still, $H$ respects Fib and TrivFib). Brown's lemma states roughly that, if a functor takes trivial cofibrations between cofibrant objects (resp., trivial fibrations between fibrant objects) to weak equivalences (weq-s), it takes all {weq-s} between cofibrant objects (resp., fibrant objects) to weq-s. It follows that the restriction $G:{\tt C}_c\to{\tt D}$ of $G$ to the full subcategory ${\tt C}_c$ of cofibrant objects (resp., that the restriction $H:{\tt D}_f\to{\tt C}$ of $H$ to the full subcategory ${\tt D}_f$ of fibrant objects) sends weq-s to weq-s. Since the cofibrant replacement functor $Q_{\tt C}$ (resp., the fibrant replacement functor $\mathcal{R}_{\tt D}$) respects weq-s, and since the functor $L_{\tt C}:{\tt C}\to {\tt C}[W^{-1}]={\tt Ho}({\tt C})$ (resp., $L_{\tt D}:{\tt D}\to {\tt D}[W^{-1}]={\tt Ho}({\tt D})$) sends weq-s to isomorphisms, the functor $L_{\tt D}GQ_{\tt C}:{\tt C}\to {\tt Ho}({\tt D})$ (resp., $L_{\tt C}H\mathcal{R}_{\tt D}:{\tt D}\to {\tt Ho}({\tt C})$) sends weq-s to isomorphisms. It thus factors through $\tt Ho(C)$ (resp., $\tt Ho(D)$), so that we get functors \be\label{DerFunModCat}\mathbf{L}(L_{\tt D}GQ_{\tt C}):{\tt Ho}({\tt C})\rightleftarrows{\tt Ho}({\tt D}):\mathbf{R}(L_{\tt C}H\mathcal{R}_{\tt D})\;\ee that we call {\bf left derived functor} of $G$ and {\bf right derived functor} of $H$, respectively. They are usually denoted simply by $\mathbf{L}G$ and $\mathbf{R}H$.

\subsection{Localization of a triangulated functor} We can {\bf localize with respect to a null system $N$ of $\tt D$ a triangulated functor} $F:{\tt D}\to {\tt D'}$ between triangulated categories, i.e., an additive functor that commutes with the translations and preserves the d.t.-s. The localization $F_N:{\tt D}/N\to{\tt D'}$ of a triangulated functor will also be triangulated. More precisely, to localize $F:{\tt D}\to{\tt D'}$ with respect to $N$, it suffices to localize its restriction $F:{\tt I}\to{\tt D'}$ to a full triangulated subcategory $\tt I$ of $\tt D$ with respect to the induced null system $N\cap \tt I$ of $\tt I$. However, the subcategory $\tt I$ must satisfy two conditions. First, any object of $\tt D$ must be connected to an object of ${\tt I}$ by a morphism in $W$ -- the multiplicative system associated to $N$ (this condition implies that the category ${\tt I}/N\cap\tt I$ is equivalent to the category ${\tt D}/N$: we denote the quasi-inverse functors by $i:{\tt I}/N\cap {\tt I}\to {\tt D}/N$ and $i^{-1}$). The second condition is that $F$ has to send any element of $N\cap \tt I$ to 0 (this condition implies that $F:\tt I\to\tt D'$ admits a localization $F_{N\cap {\tt I}}:{\tt I}/N\cap \tt I\to D'$). If these two conditions are satisfied, we say that the {\bf full triangulated subcategory $\tt I$ of $\tt D$} is {\bf $F$-injective} with respect to $N$. In this case, the right localization $F_N:{\tt D}/N\to\tt D'$ with respect to $N$ exists and is given by $$F_N=F_{N\cap \tt I}\circ i^{-1}:{\tt D}/N\to \tt D'\;.$$ The left localization of $F$ with respect to $N$ is defined dually via a {\bf full triangulated subcategory $\tt P$ of $\tt D$} that is {\bf $F$-projective} with respect to $N$.

\subsection{Derived functors between derived categories of Abelian categories}

Let $$G:{\tt A}\rightleftarrows{\tt B}:H$$ be {\bf right and left exact covariant functors of Abelian categories}.\medskip

We focus on the right localization of $H$; the left localization of $G$ is obtained dually. Due to its additivity, $H$ can be extended to any additive category. In particular, we have canonical functors $${\tt Ch}^-H:{\tt Ch^-(B)}\to {\tt Ch^-(A)}\quad\text{and}\quad\mathbb{K}^-H: \mathbb{K}^-({\tt B})\to \mathbb{K}^-({\tt A})\;,$$ which are obtained by just applying $H$ to chain complexes, maps, and homotopies. Since ${\tt Ch^-}H$ does in general not respect quasi-isomorphisms, it cannot be derived as functor between categories with weak equivalences. However, when post-composing $\mathbb{K}^-H$ with the localization functor $L_{\tt A}:\mathbb{K}^-({\tt A})\to \mathbb{D}^-({\tt A}),$ we get a triangulated functor $$\mathcal{K}^-H: \mathbb{K}^-({\tt B})\to \mathbb{D}^-({\tt A})\;$$ between $\zD$-categories. It can therefore be localized with respect to the null system $N^-({\tt B})$ of $\mathbb{K}^-({\tt B})$ made of all acyclic complexes. As mentioned before, it suffices to localize -- with respect to the induced null system $N^-({\tt B})\cap\mathbf{I}$ -- the restriction $\mathcal{K}^-H$ of $\mathcal{K}^-H$ to a $\mathcal{K}^-H$-injective full triangulated subcategory $\mathbf{I}$ of $\mathbb{K}^-({\tt B})$.\medskip

It is actually possible to deduce the injective full triangulated subcategory $\mathbf{I}$ of the source of the functor $\mathcal{K}^-H$, from an injective full additive subcategory $\tt I$ of the source of the functor $H$. A full additive subcategory $\tt I$ of $\tt B$ is called $H$-injective, if any object in $\tt B$ is related by a monomorphism to an object in $\tt I$, if the {\small RHS} object of a {\small SES} with {\small LHS} and central objects in $\tt I$, is in $\tt I$, and if $H$ transforms any {\small SES} with {\small LHS} object in $\tt I$ into a {\small SES}. It can quite easily been checked that, if $\tt I$ is an {\bf $H$-injective full additive subcategory} of $\tt B$, then $\mathbb{K}^-({\tt I})$ is a $\mathcal{K}^-H$-injective full triangulated subcategory of $\mathbb{K}^-({\tt B})$.\medskip

The localization of $\mathcal{K}^-H$ with respect to $N^-({\tt B})$ is called {\bf right derived functor} $$\mathbb{R}^-H:\mathbb{D}^-({\tt B})\to\mathbb{D}^-({\tt A})$$ of $H$. By definition, we have \be\label{DefDerFunAb}\mathbb{R}^-H=\mathcal{K}^-H_{\,N^-({\tt B})}=\mathcal{K}^-H_{\,N^-({\tt B})\,\cap\,\mathbb{K}^-({\tt I})}\circ i^{-1}\;,\ee where notation is the same as in the preceding subsection. Since the homology functor $\Hc_k:{\tt Ch}^-({\tt A})\to\tt A$, $k\le 0$, factors through the derived category, thus inducing a homology functor $\Hc_k:\mathbb{D}^-({\tt A})\to \tt A$, we obtain the {\bf $k$-th right derived functor} $$\mathbb{R}_k H:=\Hc_k\circ\mathbb{R}^-H:\mathbb{D}^-({\tt B})\to{\tt A}\;.$$ For any $Y\in \mathbb{D}^-({\tt B})$, i.e., $Y\in\mathbb{K}^-(\tt B)$, there exists $I\in \mathbb{K}^-(\tt I)$ and a quasi-isomorphism $Y\to I$ in $\mathbb{K}^-({\tt B})$, so that $Y\simeq I$ in $\mathbb{D}^-(\tt B)$ and $\mathbb{R}^-H\,(Y)\simeq \mathbb{R}^-H\,(I)$. It follows from the definition (\ref{DefDerFunAb}) that \be\label{DefDerFunAb2}\mathbb{R}^-H\,(Y)=\mathcal{K}^-H\,(I)=H(I)\;.\ee

{\bf If $\tt B$ has enough injectives}, the full subcategory $\tt I_{B}$ of all injective objects of $\tt B$ is a full additive subcategory that is injective for any left exact covariant functor \cite{KS}, in particular for $H$. Hence, $\mathbb{K}^-({\tt I_B})$ is a $\mathcal{K}^-H$-injective full triangulated subcategory of $\mathbb{K}^-(\tt B)$, so that, for any $Y\in\mathbb{D}^-({\tt B})$, we have, in view of (\ref{DefDerFunAb2}), \be\label{DefDerFunAb3}\mathbb{R}^-H\,(Y)=H(I_Y)\;,\ee where $I_Y\in\mathbb{K}^-({\tt I_B})$ is a non-positively graded chain complex made of injective objects of $\tt B$ that is quasi-isomorphic in $\mathbb{K}^-({\tt B})$ to $Y$, i.e., {\bf where $I_Y$ is an invective resolution of the chain complex $Y$ of $\tt B$}.

Dually, {\bf if $\tt A$ has enough projectives}, the full subcategory $\tt P_{A}$ of all projective objects of $\tt A$ is a full additive subcategory that is projective for any right exact covariant functor \cite{KS}, in particular for $G$. Hence, $\mathbb{K}^+({\tt P_A})$ is a $\mathcal{K}^+G$-projective full triangulated subcategory of $\mathbb{K}^+(\tt A)$, so that, for any $X\in\mathbb{D}^+({\tt A})$, we have $$\mathbb{L}^+G:\mathbb{D}^+({\tt A})\to\mathbb{D}^+({\tt B})$$ and \be\label{DefDerFunAb4}\mathbb{L}^+G\,(X)=G(P_X)\;,\ee where $P_X\in\mathbb{K}^+({\tt P_A})$ is a non-negatively graded chain complex made of projective objects of $\tt A$ that is quasi-isomorphic in $\mathbb{K}^+({\tt A})$ to $X$, i.e., {\bf where $P_X$ is a projective resolution of the chain complex $X$ of $\tt A$}.

\subsection{Relations}\hspace{10cm}

\vspace{1.5mm}

\noindent{\bf a. Relation to classical derived functors.} For any {\bf left exact covariant functor $H:\tt B\to \tt A$ between Abelian categories} (the case of a {right exact covariant functor between Abelian categories} is dual), {\bf whose source $\tt B$ has enough injectives} (dually, {whose source has enough projectives}), the {\bf classical right derived functors} $$R_kH:\tt B\to \tt A\;,$$ $k\le 0$, are defined on $Y\in\tt B$ using an injective resolution of the object $Y$, i.e., an exact sequence $$0\to Y\to I_0\to I_{-1}\to \ldots$$ made of injective objects $I_k$ (or, equivalently, an injective resolution $$I_\bullet: 0\to I_0\to I_{-1}\to \ldots\,$$ of $Y\in\tt B$ viewed as a non-positively graded chain complex concentrated in degree 0 and with zero differential). More precisely, one considers the complex $H(I_\bullet)$ and computes its homology $\mathcal H$ at any spot $k$: $$R_k H(Y)=\Hc_k(H(I_\bullet))\in\tt A\;.$$

When applying the results of the preceding subsection to the present situation, we (also) view $Y\in\tt B$ as a complex $Y_\bullet\in \mathbb{K}^-(\tt B)$ and note that the complex $I_\bullet\in\mathbb{K}^-(\tt I_{\tt B})$ is quasi-isomorphic to $Y_\bullet$. Hence, we get $$\mathbb{R}^-H(Y)=H(I_\bullet)\in\mathbb{D}^-({\tt A})\quad\text{and}\quad\mathbb{R}_kH(Y)=\Hc_k(\mathbb{R}^-H(Y))=\Hc_k(H(I_\bullet))=R_kH(Y)\;.$$ We thus recover the classical situation as a particular case of the derived functors constructed via localization of triangulated functors. The classical approach is interesting inter alia for practical computations.\medskip

\noindent{\bf b. Relation to model categorical derived functors.} Let now $R,S$ be unital rings and let $$G:{\tt A}:={\tt Mod}(R)\rightleftarrows{\tt B}:={\tt Mod}(S):H\;$$ be an adjunction of covariant additive functors. The latter can be extended to an adjunction $$G:{\tt C}:={\tt DG}_+\text{\it\small R}\,{\tt M}\,|_{\op{proj}}\rightleftarrows{\tt D}:={\tt DG}_+\text{\it\small S}\,{\tt M}\,|_{\op{prop}}:H\;,$$ where the fact that the canonically constructed adjoint of a chain map $G(X)\to Y$ (resp., $X\to H(Y)$) respects the differentials, comes from the naturality of the original adjunction. As {\em any} left (resp., right) adjoint, the functor $G$ (resp., $H$) is right (resp., left) exact and respects colimits (resp., limits), in particular cokernels (resp., kernels). Since categories of modules are Abelian and have enough projectives (resp., injectives), the result (\ref{DefDerFunAb4}) (resp., (\ref{DefDerFunAb3})) holds.

If we now endow the source and the target, as indicated, with the projective model structure, the extended adjunction is not necessarily a Quillen adjunction. Let us assume that $G$ (resp., $H$) respects injections (resp., surjections) (these conditions are automatically satisfied if the injections (resp., surjections) are split). Then $H$ respects fibrations and $G$ transforms cofibrations, i.e., degree-wise injective chain maps $\varphi_\bullet$ with degree-wise projective cokernels $\op{coker}(\varphi_i)$ into degree-wise injective chain maps $G(\varphi_\bullet)$ with cokernels $\op{coker}(G(\varphi_i))=G(\op{coker}(\varphi_i))$. The latter are projective. Indeed, a projective object $P$ can be defined as an object such that the covariant Hom-functor $\h(P,\bullet)$ transforms surjections into injections. The adjunction isomorphism shows that any surjection $s:N'\to N''$ is transformed into $$\h(G(\op{coker}(\varphi_i)),s)\simeq\h(\op{coker}(\varphi_i),H(s))\;,$$ where the {\small RHS} is an injection, since $H$ respects surjections. Hence, $G$ respects cofibrations, the extended adjunction is Quillen, and Equation (\ref{DerFunModCat}) is valid as well.

Note now that the sources $\mathbb{D}^+({\tt A})$ and $\op{Ho}({\tt C})$ (resp., the targets) of $\mathbb{L}^+G$ and $\mathbf{L}(L_{\tt D}GQ_{\tt C})$ coincide in view of (\ref{DerHoBoth}), and that their objects are just those of $\tt C$. Moreover, we already mentioned that an object in ${\tt C}:={\tt DG}_+\text{\it\small R}\,{\tt M}\,|_{\op{proj}}$ is cofibrant if and only if its terms are $R$-projective. It is thus clear that $Q_{\tt C}X$, $X\in{\tt C}$, is a projective resolution of $X$ and that $\mathbb{L}^+G(X)=G(Q_{\tt C}X).$ On the other hand, the localization functor $L_{\tt D}$ is the identity on objects and the localization $\mathbf{L}$, being the factorization through the homotopy category $\op{Ho}({\tt C})$, the localized and non-localized functors coincide on objects. Eventually, we have $$\mathbf{L}(L_{\tt D}GQ_{\tt C})(X)=G(Q_{\tt C}X)=\mathbb{L}^+G(X)\;.$$ When extending the original adjunction to an adjunction $$G:{\tt DG}_-\text{\it\small R}\,{\tt M}\,|_{\op{inj}}\rightleftarrows{\tt DG}_-\text{\it\small S}\,{\tt M}\,|_{\op{inj}}:H\;$$ and assuming again that $G$ (resp., $H$) respects injection (resp., surjections), we see similarly that the model categorical derived functor $\mathbf{R}(L_{\tt C}H\mathcal{R}_{\tt D})$ and the triangulated derived functor $\mathbb{R}^-H$ coincide.

\section{Universes}\label{AppendixUniverses}

It is well-known that the set of all sets is not a set but a proper class. In the following, we consider a pyramid of types of set. Start with some type of set on top of the pyramid and call it the 0-sets. Then the set of all 0-sets is not a 0-set, but a set of a next, more general, type, say a 1-set. Similarly, the set of all 1-sets is not a 1-set but a 2-set, and so on. Finally, the union of all types of set is the proper class of all sets.\medskip

The adequate formalization of the idea of set of all sets of a certain type is the notion of Grothendieck universe ($\ast$). A Grothendieck universe a (very large) set $U$, whose elements are sets and which is closed under all standard set-theoretical operations. More precisely \cite{SGA},

\begin{defi} A {\bf universe} is a set $U$ that satisfies the axioms: \begin{enumerate}\item if $x\in U$ and $y\in x$, then $y\in U$, \item if $x,y\in U$, then the set $\{x,y\}$ is an element of $U$, \item if $x\in U$, then the set $\mathcal{P}(x)$ of all subsets of $x$ is an element of $U$, \item if $I\in U$ and $x_i\in U$, for all $i\in I$, then $\bigcup_{i\in I}x_i\in U$, \item $\N\in U$.\end{enumerate}\end{defi}

The preceding axioms allow to prove many additional closure properties, but it is not impossible to leave a universe. The elements of $U$ are termed {\bf $U$-sets}. In particular, $U$ is the set of all $U$-sets (see ($\ast$)). As suggested above $U\notin U$, but there exists a pyramid of universes $U \in V \in W\in \ldots \,$, so that any element of $U$ is also an element of $V$ and of $W$, and so on. It is therefore natural to think about the union of all Grothendieck universes as the proper class of all sets. Moreover, this interpretation implies that any set belongs to some universe (Grothendieck's axiom).\medskip

We continue with a number of basic definitions.\medskip

A set $S$ is {\bf $U$-small}, if $S$ is isomorphic to a $U$-set (not all authors distinguish between $U$-set and $U$-small set). The category {\tt $U$-Set} is the category with objects all the $U$-sets and with morphisms all the maps between two $U$-sets. Both, the collection $\op{Ob}(U{\tt -Set})$ of objects and the collection $\op{Mor}(U{\tt -Set})$ of morphisms are sets, although no $U$-sets, but we can speak about the category {\tt $U$-Set} without having to pass to proper classes.\medskip

Moreover, a {\bf $U$-category} {\tt C}, or, better, a {\bf locally $U$-small category} {\tt C}, is a category such that, for any $c',c''\in {\tt C}$, the set $\h_{\tt C}(c',c'')$ is $U$-small. In \cite{SGA}, a category {\tt C} is viewed as the set $\op{Mor}({\tt C})$ of its arrows (containing the subset of identity arrows, i.e., the subset $\op{Ob}({\tt C})$ of objects). Hence, ${\tt C} \in U$ and $\tt C$ is $U$-small can be given the usual meanings. More precisely, if ${\tt C}\simeq \op{Mor}({\tt C})\in U$, then $\op{Ob}({\tt C})\in \mathcal{P}({\tt C})\in U$: for ${\tt C}\in U$, we have $\op{Ob}({\tt C})\in U$ and $\op{Mor}({\tt C})\in U$, i.e., objects and morphisms are $U$-sets. Similarly, if a category ${\tt C}\simeq \op{Mor}({\tt C})$ is $U$-small, it is easily seen that $\op{Ob}({\tt C})$ and $\op{Mor}({\tt C})$ are $U$-small sets. Let us stress that:

\begin{rem}{Contrarily to a $U$-set $S$, which is just a set $S\in U$, a $U$-category $\tt C$ is not a category ${\tt C}\in U$: A $U$-category is a locally $U$-small category in the above sense, whereas a category ${\tt C}\in U$ is a category such that $\op{Ob}({\tt C}),\op{Mor}({\tt C})\in U$. Note that, in view of what has been said above, any category $\tt C$ belongs to $U$, is $U$-small, and is locally $U$-small, for some universe $U$.
}\end{rem}

The necessity to change from a universe $V$ to a larger universe $W\ni V$ appears in particular when speaking about generalized spaces. If $\tt C$ denotes some category of spaces, its Yoneda dual category $${\tt C}\text{\Large\v{}}:={\tt Fun}({\tt C}^{\op{op}},{\tt Set})\;,$$ i.e., the category of contravariant $\tt Set$-valued functors defined on $\tt C$, or, still, the category of presheaves defined on $\tt C$, may be viewed as a category of generalized spaces. In our work, the category $${\tt S\,C}\text{\Large\v{}}_{\!\!V}:={\tt Fun}({\tt C}^{\op{op}},V{\tt -SSet})$$ of simplicial presheaves on $\tt C$ with respect to $V$ will play an important role. We start recalling some fundamental results \cite{SGA}:\medskip

\begin{prop}\label{SmallFunUniverse}Consider a universe $V$, two categories $\tt C, \tt D$, as well as the category ${\tt Fun}({\tt C},{\tt D})$ of functors from $\tt C$ to $\tt D$.
\begin{enumerate}
\item If ${\tt C}, {\tt D}\in V$ $(\,$resp., are $V$-small$\;)$, the category ${\tt Fun}({\tt C},{\tt D})$ is an element of $V$ $(\,$resp., is $V$-small$\;)$.
\item If $\tt C$ is $V$-small and $\tt D$ is a $V$-category, the category ${\tt Fun}({\tt C},{\tt D})$ is a $V$-category.
\item If $\tt C$ is $V$-small, the category ${\tt C}\text{\!\Large\v{}}_{\! V}$ is a $V$-category.
\item If $\tt C$ is a $V$-category, the category ${\tt C}\text{\!\Large\v{}}_{\! V}$ is not necessarily a $V$-category.\end{enumerate}
\end{prop}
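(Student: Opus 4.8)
The plan is to derive all four parts from the closure properties of the Grothendieck universe $V$, after first recording a few elementary consequences of the defining axioms that are not stated explicitly. From the transitivity axiom one gets that every subset of a $V$-set is a $V$-set: if $T\subseteq S$ and $S\in V$, then $T\in\mathcal P(S)\in V$, so $T\in V$. Combining this with the pairing, power-set and $V$-indexed union axioms one checks in the usual way that $V$ is closed under ordered pairs, finite products, arbitrary $V$-indexed products and unions, and --- crucially --- under forming the set $B^{A}$ of all maps $A\to B$ between two $V$-sets (it embeds into $\mathcal P(A\times B)$). I would isolate these as a preliminary remark, since every subsequent step is an instance of them applied to the object- and morphism-sets of the categories involved.

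For part (1) I would encode a functor $F\colon{\tt C}\to{\tt D}$ as a pair of maps $\op{Ob}({\tt C})\to\op{Ob}({\tt D})$ and $\op{Mor}({\tt C})\to\op{Mor}({\tt D})$, and a natural transformation, together with its recorded source and target functors, as an element of $\op{Mor}({\tt D})^{\op{Ob}({\tt C})}$ tagged by two functors. When ${\tt C},{\tt D}\in V$, all four sets $\op{Ob}({\tt C}),\op{Mor}({\tt C}),\op{Ob}({\tt D}),\op{Mor}({\tt D})$ are $V$-sets, so the collections of functors and of natural transformations sit inside function-sets, products and power sets of $V$-sets; by the preliminary remark they are themselves $V$-sets, whence $\op{Mor}({\tt Fun}({\tt C},{\tt D}))\in V$, i.e. ${\tt Fun}({\tt C},{\tt D})\in V$. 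The ``$V$-small'' variant then follows formally: isomorphisms of categories $\Phi\colon{\tt C}\to{\tt C}'$ and $\Psi\colon{\tt D}\to{\tt D}'$ with ${\tt C}',{\tt D}'\in V$ induce an isomorphism ${\tt Fun}({\tt C},{\tt D})\simeq{\tt Fun}({\tt C}',{\tt D}')$, $F\mapsto\Psi F\Phi^{-1}$, and the target lies in $V$ by the first case.

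Parts (2) and (3) are the ``local'' counterparts. For (2) the key observation is that, for functors $F,G\colon{\tt C}\to{\tt D}$, a natural transformation $F\to G$ is exactly an element of the product
\[
\h_{{\tt Fun}({\tt C},{\tt D})}(F,G)\ \subseteq\ \prod_{c\in\op{Ob}({\tt C})}\h_{\tt D}(Fc,Gc)
\]
cut out by the naturality equations. Here $\op{Ob}({\tt C})$ is $V$-small because ${\tt C}$ is, and each factor $\h_{\tt D}(Fc,Gc)$ is $V$-small because ${\tt D}$ is a $V$-category; since a $V$-small-indexed product of $V$-small sets is again $V$-small, and $\h_{{\tt Fun}({\tt C},{\tt D})}(F,G)$ is a subset of it, that hom-set is $V$-small. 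Thus ${\tt Fun}({\tt C},{\tt D})$ is locally $V$-small. Part (3) is the special case in which ${\tt C}$ is replaced by ${\tt C}^{\op{op}}$ and ${\tt D}=V{\tt -Set}$: one only needs that ${\tt C}^{\op{op}}$ is $V$-small (it has the same morphism set as ${\tt C}$) and that $V{\tt -Set}$ is a $V$-category, the latter because $\h_{V{\tt -Set}}(A,B)=B^{A}\in V$ for $V$-sets $A,B$.

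Finally, for part (4) I would exhibit an explicit counterexample rather than a proof. Let $\kappa=|V|$ be the strongly inaccessible cardinality of the universe and take ${\tt C}$ to be the discrete category on an object set $X$ that is not $V$-small, e.g. $X=\kappa$; being discrete, ${\tt C}={\tt C}^{\op{op}}$ has only singleton hom-sets and is therefore trivially a $V$-category, yet its morphism set is too large to be $V$-small. A presheaf is then just an $X$-indexed family of $V$-sets, and for two such families $A=(A_x)$ and $B=(B_x)$ one has $\h(A,B)=\prod_{x\in X}\h_{V{\tt -Set}}(A_x,B_x)$. Choosing $A_x=B_x=\{0,1\}$ for all $x$ gives $|\h(A,B)|=4^{\kappa}\ge 2^{\kappa}>\kappa$ by Cantor's theorem, so this hom-set is not $V$-small and ${\tt C}\text{\!\Large\v{}}_{\! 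V}$ fails to be a $V$-category. The main obstacle, and the only point deserving real care, is the cardinal-arithmetic step hidden in part (2): that a $V$-small-indexed product of $V$-small sets stays $V$-small uses a uniform choice of $V$-set representatives and the inaccessibility of $\kappa$; keeping the three notions ``${}\in V$'', ``$V$-small'', and ``locally $V$-small'' rigorously apart throughout is where most of the bookkeeping effort lies.
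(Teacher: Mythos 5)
Your argument is correct. Note that the paper gives no proof of this proposition at all --- it is recalled from [SGA4-I] with a bare citation --- so there is nothing internal to compare against; what you have written is the standard argument (closure of $V$ under subsets, function sets $B^{A}\subseteq\mathcal P(A\times B)$, and $V$-indexed products, applied to $\op{Mor}$- and $\op{Ob}$-sets, plus the discrete large $V$-category of inaccessible cardinality $\kappa=|V|$ as the counterexample for (4)), and the two delicate points you single out --- the uniform choice of bijections needed to pass from ``$V$-small'' to ``element of $V$'' before invoking the closure axioms, and keeping ``$\in V$'', ``$V$-small'' and ``locally $V$-small'' separate --- are exactly the ones that require care.
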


\begin{rem} Usually people do not specify the universe in which they work, assuming implicitly that their constructions and results hold in {\em any universe} $V$. However, sometimes set-theoretical size issues force them to pass to a higher universe $W \ni V$. In this case, their theory is (considered as) valid in {\em any universes} $V \in W$. One says that the theory has been {\em universally quantified} over 1,2, or several universes, and one speaks about the {\bf universal polymorphism approach}. If the passage to higher universes is also implicit, one speaks about {\bf typical ambiguity}. However, this ambiguity, although often used and even sometimes recommended, can be dangerous \cite [Remarks 1.3.2 and 2.5.12]{SchAlgTop}.\end{rem}

In our paper, we start with the category ${\tt C} = {\tt DG\Dc M}$, which is locally $U$-small for some universe $U$ (it is clear that the categories ${\tt DG\Dc A}$ and ${\tt Mod}(\Ac)$ are also locally $U$-small). However, $\op{Ob}({\tt DG\Dc M})$ and $\op{Mor}({\tt DG\Dc M})$ can be sets that belong only to a higher universe $V \ni U$, so that ${\tt DG\Dc M}$ is then $V$-small (and the same holds for ${\tt DG\Dc A}$ and ${\tt Mod}(\Ac)$). Since ${\tt DG\Dc M}$ is $V$-small, the category $${\tt DG\Dc M}\text{\Large\v{}}_{\! V}={\tt Fun}({\tt DG\Dc M}^{\op{op}},V{\tt -Set})$$ is locally $V$-small (\ref{SmallFunUniverse}) and thus it is $W$-small for some higher universe $W \ni V$. When considering the $V$-small category ${\tt C} = {\tt DG\Dc A}^{\op{op}}$, we conclude that $${\tt S}\,{\tt DG\Dc A}^{\op{op}}\text{\Large\v{}}_{\! V}={\tt Fun}({\tt DG\Dc A},V{\tt -SSet})$$ is locally $V$-small and $W$-small \cite[Appendix A.1]{TV05}.\medskip

The preceding paragraph explains the idea behind the introduction of the three universes $U\in V\in W$ in \cite{TV08}. In the present paper, we work implicitly in an arbitrary universe $U$ that we need a priori not mention. However, since typical ambiguity can lead to problems, we mention explicitly the change of universe each time it is required. In fact, this is not necessary until we pass to simplicial presheaves.

\vfill
{\small GD, \emph{email:} {\sf gennaro.dibrino@gmail.com}; DP, \emph{email:} {\sf damjan.pistalo@uni.lu}; NP, \emph{email:} {\sf norbert.poncin@uni.lu}.}
\end{document}